\crefname{hypothesis}{Hypothesis}{Hypotheses}
\title{Running Primal-Dual Gradient Method for Time-Varying Nonconvex Problems}
\author{Yujie Tang\thanks{
California Institute of Technology, Pasadena, CA (\email{ytang2@caltech.edu}, \email{slow@caltech.edu}).
}\and 
Emiliano Dall'Anese\thanks{
University of Colorado Boulder, Boulder, CO (\email{emiliano.dallanese@colorado.edu}).
}\and
Andrey Bernstein\thanks{
National Renewable Energy Laboratory, Golden, CO (\email{andrey.bernstein@nrel.gov}).
}\and 
Steven Low\footnotemark[1]
}
\DeclareMathOperator*{\argmin}{arg\,min}
\DeclareMathOperator*{\esssup}{ess\,sup}
\newenvironment{subtheorem}[1]{%
  \def\subtheoremcounter{#1}%
  \refstepcounter{#1}%
  \protected@edef\theparentnumber{\csname the#1\endcsname}%
  \setcounter{parentnumber}{\value{#1}}%
  \setcounter{#1}{0}%
  \expandafter\def\csname the#1\endcsname{\theparentnumber\alph{#1}}%
  \ignorespaces
}{%
  \setcounter{\subtheoremcounter}{\value{parentnumber}}%
  \ignorespacesafterend
}
\newcounter{parentnumber}
\begin{document}

\maketitle
\begin{abstract}
  This paper considers a nonconvex optimization problem that evolves over time, and addresses the synthesis and analysis of regularized primal-dual gradient methods to track a Karush--Kuhn--Tucker (KKT) trajectory. The proposed regularized primal-dual gradient methods are implemented in a running fashion, in the sense that  the underlying optimization problem changes during the iterations of the algorithms. For a problem with twice continuously differentiable cost and constraints, and under a generalization of the Mangasarian-Fromovitz constraint qualification, sufficient conditions are derived for the running algorithm to track a KKT trajectory. Further, asymptotic bounds for the tracking error (as a function of the time-variability of a KKT trajectory) are obtained. A continuous-time version of the algorithm, framed as a system of differential inclusions, is also considered and analytical convergence results are derived. For the continuous-time setting, a set of sufficient conditions for the KKT trajectories not to bifurcate or merge is proposed. Illustrative numerical results inspired by a real-world application are provided. 
\end{abstract}

\begin{keywords}
time-varying optimization, nonconvex optimization, running algorithms, tracking, gradient methods, differential inclusion.
\end{keywords}


\section{Introduction}
\label{sec:introduction}

This paper focuses on continuous-time nonconvex optimization problems of the form:
\begin{equation}
\label{eq:main_problem_cont}
\begin{aligned}
\min_{x \in \mathbb{R}^n}\quad & c(x,t) \\
\textrm{s.t.}\quad & f(x,t)\leq 0 \\
& x\in\mathcal{X}(t)
\end{aligned}
\end{equation}
where $t\in [0,S]$ is the temporal index, with some positive $S\in\mathbb{R}$, $c:\mathbb{R}^n\times\mathbb{R} \rightarrow\mathbb{R}$, and $f:\mathbb{R}^n\times\mathbb{R}\rightarrow\mathbb{R}^m$. For each $t\in [0,S]$, $c(\cdot,t)$ and $f(\cdot,t)$ are assumed twice continuously differentiable. The function $f(x,t)$ is decoupled as: 
\begin{equation}
f(x,t) := f^c(x,t)+f^{nc}(x,t),
\end{equation}
where $f^c(\cdot, t)$ is twice continuously differentiable and has convex components for each $t\in[0,S]$, and $f^{nc}(x,t)$ collects nonconvex functions. Finally, $\mathcal{X}(t)\subset\mathbb{R}^n$ is a time-varying convex set (additional modeling details and pertinent assumptions will be provided in the ensuing sections). The optimization problem~\cref{eq:main_problem_cont} can be associated with systems or networked systems governed by possibly nonlinear physical or logical models, and it can capture performance objectives and constraints that evolve over time. The interdisciplinary nature of time-varying optimization problems~\cite{Simonetto_Asil14} is evident through a number of works in the domains of power systems~\cite{dall2018optimal,tang2017distributed,hauswirth2018time}, communication systems~\cite{Low99,chen2012convergence}, robotic networks~\cite{Bullo18}, and online learning~\cite{mokhtari2016online,yi2016tracking} just to mention a few. On the other hand, suitable modifications of~\cref{eq:main_problem_cont} can model time-varying data processing problems under streaming of measurements such as matrix factorization~\cite{Ling12} and sparse signal recovery~\cite{balavoine2015discrete}. 

Accordingly, under given constraint qualification conditions (that will be discussed shortly in \Cref{sec:ProblemFormulation}), the continuous-time optimization problem implicitly defines \emph{trajectories} for its Karush--Kuhn--Tucker (KKT) points. Letting $(z^*(t))_{t\in [0,S]}$ denote a KKT trajectory, the objective of the paper is to synthesize algorithms that can \emph{track}  $(z^*(t))_{t\in [0,S]}$, and to establish bounds for the tracking error on a per-iteration basis and asymptotically, and to derive results for the convergence rate. 

To this end, this paper first focuses on algorithmic solutions with discrete-time updates, that allow one to account for non-infinitesimal computational times in the updates of the algorithm and delays in the collection of problem inputs. Assume that the interval $[0,S]$ is divided into $T$ time slots of duration $\Delta_T:=S/T$, and consider the following sampled version of~\cref{eq:main_problem_cont}: 
\begin{equation}\label{eq:main_problem}
\begin{aligned}
\min_{x}\quad & c_\tau(x) \\
\textrm{s.t.}\quad & f^c_\tau(x)+f^{nc}_\tau(x)\leq 0 \\
& x\in\mathcal{X}_\tau
\end{aligned}
\end{equation}
for $\tau \in \{1, \ldots, T\}$, where 
$$
\begin{aligned}
c_\tau(x)&:=c(x,\tau\Delta_T), &\quad
f^c_\tau(x)&:=f^c(x,\tau\Delta_T), \\
\mathcal{X}_\tau&:=\mathcal{X}(\tau\Delta_T),
&\quad 
f^{nc}_\tau(x)&:=f^{nc}(x,\tau\Delta_T)
\end{aligned}
$$
are sampled versions of the functions and sets  in~\cref{eq:main_problem_cont} and, similarly, a sampled KKT trajectory is $\big(z^*_\tau := z^*(\tau\Delta_T)\big)_{\tau = 1}^{T}$. One way to obtain $(z^*_\tau)_{\tau = 1}^{T}$ is to solve problem~\cref{eq:main_problem} in a \emph{batch} setting at each time $t$; that is, an iterative algorithm is utilized, and iterations are sequentially performed within each time interval $(\tau\Delta_T, (\tau+1)\Delta_T]$ until convergence to a KKT point $z^*_\tau$. However, a batch solution might not be appropriate in a time-varying setting, since underlying communication and computational complexity requirements may prevent the algorithm from converging within a time interval $\Delta_T$; this is especially the case when the sampling time is chosen small enough to fully model fast-varying costs and constraints and closely approximate the continuous-time optimal trajectory~\cite{SimonettoGlobalsip2014}. In lieu of a batch solution, this paper leverages \emph{running} primal-dual gradient methods to track a sequence of optimal points $(z_\tau^*)_{\tau=1}^T$. 

In particular, the paper offers the following technical approaches and contributions. 

(i) The paper utilizes a regularized primal-dual method, where the regularization comes in the form of a strongly concave term in the dual vector variable that is added to the Lagrangian function~\cite{koshal2011multiuser}; see also~\cite{Rockafellar76} and~\cite{khuzani2016distributed,HongPerturbedPrimalDual18}. The strongly concave regularization term plays a critical role in establishing Q-linear convergence of the proposed algorithm. However, as an artifact of this regularization, existing works for time-invariant convex programs~\cite{koshal2011multiuser}, time-varying convex programs~\cite{onlineOptFeedback}, and for static nonconvex problems~\cite{HongPerturbedPrimalDual18} could prove that gradient-based iterative methods approach an approximate KKT point~\cite{Andreani11}. On the other hand, the present paper provides analytical results in terms of tracking a KKT point (as opposed to an approximate KKT point) of the nonconvex problem~\cref{eq:main_problem} and provides bounds for the distance of the algorithmic iterates from a KKT trajectory. The bounds are obtained by finding conditions under which the regularized primal-dual gradient step exhibits a contraction-like behavior; the bounds are shown to be a function of pertinent algorithmic parameters such as the step size and the coefficient of the regularization function, as well as the maximum temporal variability of a KKT trajectory.

(ii) The paper provides sufficient conditions under which there exists a set of algorithmic parameters that guarantees that the asymptotic error bound holds. From a qualitative standpoint, the sufficient conditions for the existence of feasible parameters suggests that the problem should be ``sufficiently convex'' around a KKT trajectory to overcome the nonlinearity of the nonconvex constraints. 

(iii) The paper analyzes convergence and performance for the case where the algorithm can afford -- in the limit -- an update in infinitesimal time (see, e.g.,~\cite{Cherukuri17,cherukuri2017role,Fazlyab2016a,Rahili2015,Elia-CDC11} and pertinent references therein for continuous-time algorithmic platforms for convex problems); that is, the paper studies the continuous-time limit of the proposed regularized primal-dual gradient algorithm. We show that the continuous-time counterpart of the discrete-time algorithm is given by a system of differential inclusions that has been studied in the literature under the name of \emph{perturbed sweeping processes} \cite{adly2014convex,castaing1993evolution}; the discrete-time algorithm was referred to as \emph{catching algorithm} of the perturbed sweeping processes. The tracking performance of the system of differential inclusions is analytically established, and similarly to the discrete-time case, sufficient conditions under which there exists a set of algorithmic parameters that guarantees bounded tracking error is derived. The continuous-time tracking error bound shares a similar form with the discrete-time tracking error bound, and they are unified under the notion of \emph{eventual tracking error bound}. 

(iv) The results -- for both the discrete-time and the continuous-time algorithms -- are stated in terms of tracking of a KKT trajectory. The paper derives conditions under which the KKT points for a given time instant will always be isolated; that is, bifurcations of KKT trajectories do not happen; similarly, merging of KKT trajectories do not happen either. 

The paper provides contributions over existing works on discrete-time algorithms for time-varying convex optimization~\cite{onlineOptFeedback,dallanese2016optimal,onlineSaddle,Ling14,Simonetto_Asil14,SimonettoGlobalsip2014} by investigating the case of time-varying nonconvex problems in the format of~\cref{eq:main_problem_cont}, and by providing the contributions (i)--(iv) above. With respect to time-varying nonconvex problems (with differentiable cost),~\cite{tang2017real} provided regret-type results for the case where constraints are lifted via approximate barrier functions. The problem of tracking a time-dependent manifold was tackled in~\cite{Zavala2010}, by leveraging a parametric generalized equation;~\cite{Zavala2010} demonstrated that if points along a solution manifold are consistently strongly regular, it is possible to track the manifold approximately by solving a single linear complementarity problem at each time step. On the other hand, the present paper focuses on more general optimization problems, and provides the contributions (i)--(iv) above for a regularized primal-dual gradient method. An Euler--Newton continuation method was proposed in~\cite{dontchev2013euler} for tracking solution trajectories of parametric variational inequalities. The strong
regularity assumption (that renders the Jacobian invertible) in~\cite{dontchev2013euler} is related to the findings in this paper; notice also that the present work focuses on first-order methods, and does not consider Newton corrector steps. 

With respect to the continuous-time algorithm (cast as a system of differential inclusions), the paper offers contributions over existing works on continuous-time saddle-point flows (see e.g.,~\cite{Cherukuri17,cherukuri2017role,Rahili2015,Elia-CDC11} and pertinent references therein) by considering non-convex problems, providing sufficient conditions for the existence of feasible parameters, and by investigating conditions under which KKT trajectories do not merge or bifurcate.

Examples of works dealing with gradient-based or proximal-based methods for (static) non-convex problems include, for example,~\cite{Carmon18,HongPerturbedPrimalDual18,HongPrimalDual18,Ochs14,HongDistributed18,Zeng18,Martinez12} (see also references therein); in particular,~\cite{HongPerturbedPrimalDual18} considers a perturbed primal-dual method applied to a problem with nonconvex cost and (linear) consensus constraints. The present paper considers a more general case, investigates running algorithms for time-varying settings, and it analyzes  both discrete-time and  continuous-time algorithms.  

The rest of the paper is organized as follows: \Cref{sec:ProblemFormulation} provides more details regarding the problem formulation and presents the discrete-time algorithm; \Cref{sec:MainResult} presents the main convergence results, while \Cref{sec:ContinuousTimeLimit} deals with the continuous-time algorithm and elaborates on the KKT trajectories; \Cref{sec:results} provides illustrative numerical results, and \Cref{sec:conclusions} concludes the paper. Some proofs are provided in the Appendix.

\subsection*{Notation} The following notation is utilized throughout the paper. 

For a twice continuously differentiable real-valued function $f$, the Hessian of $f$ at $x$ will be denoted by $\nabla^2 f(x)$. For a function $f(x,t)$ that is continuously differentiable in the first argument, its gradient with respect to $x$ at $(u,s)$ will be denoted by $\nabla_x f(u,s)$. If $f(x,s)$ is twice differentiable with respect to $x$, its Hessian with respect to $x$ at $(u,s)$ will be denoted by $\nabla^2_{xx} f(u,s)$.

For a continuously differentiable vector-valued function $f$, its Jacobian matrix evaluated at $x$ will be denoted by $J_f(x)$. On the other hand, for vector-valued $f(x,t)$ that is continuously differentiable in the first argument, its Jacobian matrix with respect to $x$ at $(u,s)$ is denote by $J_{f,x}(u,s)$.

Given a convex set $C$, its normal cone at $x\in C$, defined by $\{y: y^T(z-x)\leq 0,\,\forall z\in C\}$, is denoted by $N_C(x)$. The projection operator onto $C$ will be denote by $\mathcal{P}_C$. 

We denote $\mathbb{R}_+=[0,+\infty)$ and $\mathbb{R}_{++}=(0,+\infty)$. The identity matrix is denoted by $I$, or $I_n\in\mathbb{R}^{n\times n}$ when the dimensions need to be specified to avoid confusion. For $x\in \mathbb{R}^n$, the $\ell_2$-norm of $x$ is denoted by $\|x\|:=\sqrt{x^T x}$. The $n$-dimensional closed ball with radius $r$ centered at the origin is denoted by $B_n(r)$.

\section{Problem formulation and algorithm}
\label{sec:ProblemFormulation}

We make the following assumptions regarding the problem \cref{eq:main_problem_cont}.
\begin{assumption}
\label{as:set_and_function}
For the problem \cref{eq:main_problem_cont}, it holds that:
\begin{enumerate}[itemindent=18pt,leftmargin=0pt]
\item[(a)] $\mathcal{X}(t)$ is convex and closed at each $t \in [0, S]$.

\item[(b)] For each $t\in[0,S]$, the functions $c(x, t)$, $f^c(x, t)$ and $f^{nc}(x, t)$ are twice continuously differentiable over $x\in\mathbb{R}^n$. In addition, $\nabla^2_{xx}c(x,t)$, $\nabla^2_{xx}f^c_i(x,t)$ and $\nabla^2_{xx}f^{nc}_i(x,t)$ for each $i=1,\ldots,m$ are continuous over $(x,t)\in\mathbb{R}^n\times[0,S]$.

\item[(c)] There exists a Lipschitz continuous trajectory $z^\ast(t)=\big(x^\ast(t),\lambda^\ast(t)\big),\,t\in[0,S]$ such that
\begin{subequations}
\label{eq:KKT_form1}
\begin{align}
(x^\ast(t),\lambda^\ast(t)) & \in \mathcal{X}(t)\times\mathbb{R}^m_+, \\
\nabla_x c(x^\ast(t),t)
+J_{f,x}(x^\ast(t),t)^T\lambda^\ast(t) & \in -N_{\mathcal{X}(t)}(x^\ast(t)),
\label{eq:KKT_form1:primal}\\
f(x^\ast(t),t) & \in N_{\mathbb{R}^m_+}(\lambda^\ast(t)).
\label{eq:KKT_form1:dual1}
\end{align}
\end{subequations}
for each $t\in[0,S]$.
\end{enumerate}
\end{assumption}
The conditions \cref{eq:KKT_form1} are just the KKT conditions of \cref{eq:main_problem_cont} for each $t\in[0,S]$. Indeed, \cite[Theorem 2A.9]{dontchev2014implicit} shows that, if we assume the following constraint qualification condition for every $t\in[0,S]$:
\begin{equation}\label{eq:cq}
\begin{aligned}
&\textrm{There is no }\lambda\in\mathbb{R}^m_+\backslash\{0\}\textrm{ such that} \\
&\lambda^T f(x^\ast(t),t)=0
\textrm{ and }
-J_{f,x}(x^\ast(t),t)^T\lambda
\in N_{\mathcal{X}(t)}(x^\ast(t)),
\end{aligned}
\end{equation}
where $x^\ast(t)$ is a local optimal solution to \eqref{eq:main_problem_cont}, then there exists an associated optimal Lagrange multiplier $\lambda^\ast(t)$ that satisfy the KKT conditions \cref{eq:KKT_form1} for each $t\in[0,S]$.

\begin{remark}
The constraint qualification given in \cref{eq:cq} is a generalization of the Mangasarian-Fromovitz constraint qualification (MFCQ) \cite{bertsekas1999nonlinear}. Indeed, Gordan's theorem of alternative \cite{bertsekas2003convex} states that $Ax<0$ for a given $A\in\mathbb{R}^{p\times n}$ has a solution $x$ if and only if there is no $y \in \mathbb{R}_+^p\backslash\{0\}$ such that $A^T y=0$. Then in the case where $\mathcal{X}(t)=\mathbb{R}^n$, by Gordan's theorem of alternative, we directly see that the constraint qualification \cref{eq:cq} is equivalent to MFCQ.
\end{remark}
\begin{remark}
In general, there can be multiple KKT points of \cref{eq:main_problem_cont} that move in $\mathbb{R}^{n}\times\mathbb{R}^m_+$ as time proceeds, which form multiple trajectories that can appear, terminate, bifurcate or merge during the period $(0,S)$. Reference \cite{guddat1990parametric} presents a comprehensive theory of the structures and singularities of trajectories of KKT points for time-varying optimization problems. In \cite{dontchev2013euler}, the authors show that strong regularity for generalized equations is a key concept for establishing the existence of Lipschitz continuous KKT trajectories over a given finite period. Here, we arbitrarily select one of these trajectories that is well defined and Lipschitz continuous for $t\in[0,S]$, denote it by $z^\ast(t)$, and mainly focus on this trajectory in most part of our study. How to deal with KKT trajectories that exhibits discontinuities is not considered in this work, and it is the subject of current investigations.  
\end{remark}

As stated in \Cref{sec:introduction}, we sample the time interval $[0,S]$ and get a sequence of problems \cref{eq:main_problem}. Since \cref{as:set_and_function} holds over the whole interval $[0, S]$, it is clear that the sampled quantities in \cref{eq:main_problem} enjoy the same assumptions for each discrete time instant $\tau \in \{1, \ldots, T\}$.

Now we introduce the running regularized primal-dual gradient method for solving the time-varying nonconvex optimization problem \cref{eq:main_problem}. Let $\hat z_0=(\hat x_0,\hat\lambda_0)\in\mathcal{X}(0)\times\mathbb{R}^m_+$ be the initial point. The regularized primal-dual gradient algorithm produces a primal-dual pair $(\hat x_\tau,\hat\lambda_\tau)$ iteratively by
\begin{subequations}\label{eq:RegPPD}
\begin{align}
\hat x_\tau &= \mathcal{P}_{\mathcal{X}_\tau}
\left[
\hat x_{\tau-1}
-\alpha \left(\nabla c_\tau(\hat x_{\tau-1})
+
J_{f_\tau}(\hat x_{\tau-1})^T
\hat\lambda_{\tau-1}\right)
\right],
\label{eq:RegPPD:primal} \\
\hat\lambda_\tau &=
\mathcal{P}_{\mathbb{R}^m_+}\left[
\hat\lambda_{\tau-1}
+\eta\alpha
\left(f_\tau(\hat x_{\tau-1})
-\epsilon\big(\hat\lambda_{\tau-1}-\lambda_{\mathrm{prior}}\big)
\right)\right]
\label{eq:RegPPD:dual1}
\end{align}
\end{subequations}
for each $\tau\in\{1,2,\ldots,T\}$. Here $\alpha>0$, $\eta>0$, $\epsilon>0$ and $\lambda_{\mathrm{prior}}\in\mathbb{R}^m_+$ are parameters of the algorithm.

The regularized primal-dual gradient algorithm is closely related to the following saddle point problem
\begin{equation}\label{eq:saddle_point1}
\min_{x\in\mathcal{X}_\tau}\max_{\lambda\in\mathbb{R}^m_+}
\mathcal{L}^{\epsilon}_\tau(x,\lambda)
\end{equation}
where $\mathcal{L}^{\epsilon}_\tau(x,\lambda)$ denotes the regularized Lagrangian defined by
\begin{equation}\label{eq:reg_lagrange1}
\mathcal{L}^{\epsilon}_\tau(x,\lambda)
= c_\tau(x)
+\lambda^T f_\tau(x)
-\frac{\epsilon}{2}\left\|\lambda
-\lambda_{\mathrm{prior}}\right\|^2.
\end{equation}
Comparing with the ordinary Lagrangian, we can see that there is an additional term $\frac{\epsilon}{2}\left\|\lambda
-\lambda_{\mathrm{prior}}\right\|^2$ in the regularized Lagrangian, which represents regularization that drives the dual variables towards $\lambda_{\mathrm{prior}}$.
Then, \cref{eq:RegPPD} can be viewed as applying a single iteration of projected gradient descent on the primal variable and projected gradient ascent on the dual variables in \cref{eq:reg_lagrange1}, with step sizes being $\alpha$ for the primal update and $\eta\alpha$ for the dual updates respectively. The parameter $\epsilon$ then controls the amount of regularization on the dual variables. The constant vector $\lambda_{\mathrm{prior}}$ can be viewed as a prior estimate of the optimal dual variable; it   can be set to zero if such prior estimates cannot be obtained. The dual update \cref{eq:RegPPD:dual1} can also be equivalently written as
$$
\hat\lambda_{\tau}
=\mathcal{P}_{\mathbb{R}^m_+}
\left[\hat\lambda_{\tau-1}
+\eta\alpha
\left(\tilde f_\tau(\hat x_{\tau-1})
-\epsilon\hat\lambda_{\tau-1}\right)\right]
$$
with $\tilde f_\tau(x)=f_\tau(x)+\epsilon\lambda_{\mathrm{prior}}$, and thus we can also view \cref{eq:RegPPD:dual1} as employing no prior estimate of $\lambda^\ast_\tau$ but a more conservative version of the inequality constraint given by $f_\tau(x)+\epsilon\lambda_{\mathrm{prior}}\leq 0$.

By adding a quadratic regularization on the dual variables, the (regularized) Lagrangian \cref{eq:reg_lagrange1} becomes strongly concave in $\lambda$, which, as we shall show, is important for the regularized primal-dual gradient method to achieve bounded tracking error. On the other hand, the solution to the saddle point problem \cref{eq:saddle_point1} is in general different from the optimal solution to \cref{eq:main_problem}, which suggests that there could be further sub-optimality introduced by regularization in the time-varying setting. This will be  illustrated in \Cref{sec:MainResult}.

We denote the primal-dual pair produced by the proposed algorithm as $\hat z_\tau=(\hat x_\tau,\hat\lambda_\tau)$. Further, we define the norm
\begin{align}
\|z\|_{\eta} := \sqrt{\|x\|^2+\eta^{-1}\|\lambda\|^2}
\end{align}
for $z=(x,\lambda)\in\mathbb{R}^{n+m}$ and given scalar $\eta > 0$.

\section{Main tracking results}
\label{sec:MainResult}
In this section, we study the tracking performance of the regularized primal-dual gradient algorithm.

We define the \emph{tracking error} to be
$$
\left\|\hat z_\tau-z^\ast_\tau\right\|_{\eta}
=\sqrt{\left\|\hat x_\tau-x^\ast_\tau\right\|^2+
\eta^{-1}\big\|\hat \lambda_\tau-\lambda^\ast_\tau\big\|^2},
\qquad \tau=1,2,\ldots,T,
$$
which represents the distance between the true optimal solution and the solution generated by \cref{eq:RegPPD}; a small tracking error implies good tracking performance. We are interested in the factors that affect the tracking error, and especially the conditions under which a bounded tracking error can be guaranteed.

Before proceeding, we first define some quantities that will be used in our analysis. The temporal variability of the KKT pair $z^\ast(t)=(x^\ast(t),\lambda^\ast(t))$ is captured by defining the following quantity: 
\begin{align}
\sigma_{\eta} := \sup_{\substack{t_1,t_2\in[0,S],\\ t_1\neq t_2}}\frac{\|z^\ast(t_2)-z^\ast(t_1)\|_{\eta}}{|t_2-t_1|},
\end{align}
which represents the maximum rate of change of the optimal primal-dual pair with respect to the norm $\|\cdot\|_\eta$. We assume that $\sigma_\eta>0$ for some (and thus for any) $\eta>0$.

We then define
\begin{align}
M_{\lambda} &:= \sup_{t\in[0,S]}\left\|
\lambda^\ast(t)-\lambda_{\mathrm{prior}}\right\|, \\
M_{nc}(\delta) &:= \sup_{t\in[0,S]}\sup_{u:\|u\|\leq\delta} \left\|D^2_{xx}
f^{nc}_t(x^\ast(t)+u,t)\right\|,
\label{eq:def_Mnc}\\
M_{c}(\delta) &:= \sup_{t\in[0,S]}\sup_{u:\|u\|\leq\delta} \left\|D^2_{xx} f^c(x^\ast(t)+u,t)\right\|, \\
L_f(\delta) &:= \sup_{t\in[0,S]}\sup_{u:\|u\|\leq\delta} \left\|
J_{f,x}(x^\ast(t)+u,t)
\right\|, \\
D(\delta,\eta)&:=
\sqrt{\eta}L_f(\delta)+M_{c}(\delta)\sup_{t\in[0,S]}\left\|\lambda^\ast(t)\right\|.
\label{eq:def_capD}
\end{align}
Here for a vector-valued $f(x,t)$ that is twice continuously differentiable in $x$ for a fixed $t$, we use $D^2_{xx} f(x,t)$ to denote the bilinear map that maps a pair of vectors $(h_1,h_2)$ to a vector whose $i$'th entry is given by $h_2^T\nabla^2_{xx} f_i(x,t)h_1$, i.e.,
$$
D^2_{xx} f(x,t):\quad (h_1,h_2)\mapsto \left(h_2^T\nabla^2_{xx} f_i(x,t)h_1\right)_{i},
$$
and $\left\|D^2_{xx} f(x,t)\right\|$ is defined by
$$
\left\|D^2_{xx} f(x,t)\right\| := \sup_{h_1,h_2\neq 0} \frac{\left\|D^2_{xx} f(x,t)(h_1,h_2)\right\|}{\|h_1\|\|h_2\|}
=\sup_{\|h_1\|=\|h_2\|=1} \left\|D^2_{xx} f(x,t)(h_1,h_2)\right\|.
$$
The following mean-value type lemma indicates that $\left\|D^2_{xx} f(x,t)\right\|$ characterizes the nonlinearity of function $f$ with respect to $x$ at time $t$.
\begin{lemma}\label{lemma:curve_lemma}
Let $t\in[0,S]$ be fixed. Then for any $x$ and $u$ in $\mathbb{R}^n$,
\begin{equation}\label{eq:curve_bound_J}
\frac{\left\|J_{f,x}(x+u,t)-J_{f,x}(x,t)\right\|}{\|u\|}
\leq
\sup_{\theta\in[0,1]}
\left\|D^2_{xx} f(x+\theta u,t)\right\|,
\end{equation}
and
\begin{equation}\label{eq:curve_bound_f}
\frac{\left\|f(x+u,t)-f(x,t)-J_{f,x}(x,t)u\right\|}{\|u\|^2}
\leq
\frac{1}{2}\sup_{\theta\in[0,1]}
\left\|D^2_{xx} f(x+\theta u,t)\right\|.
\end{equation}
\end{lemma}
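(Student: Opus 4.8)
The plan is to reduce both inequalities to one-dimensional statements along the segment $\theta\mapsto x+\theta u$ (the claims being vacuous when $u=0$, so assume $u\neq 0$) and then invoke, respectively, the fundamental theorem of calculus and Taylor's theorem with integral remainder. Both are legitimate here because $f(\cdot,t)$ is twice continuously differentiable with the second derivatives continuous, by \Cref{as:set_and_function}(b); this regularity is all that is needed to differentiate the relevant vector/matrix-valued maps and to pass limits through the integrals.

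For \cref{eq:curve_bound_J} I would fix an arbitrary $h\in\mathbb{R}^n$ and study the $\mathbb{R}^m$-valued map $\theta\mapsto J_{f,x}(x+\theta u,t)\,h$. Its $i$-th component is $\nabla_x f_i(x+\theta u,t)^T h$, which by the chain rule and symmetry of the Hessian has $\theta$-derivative $h^T\nabla^2_{xx}f_i(x+\theta u,t)\,u$; hence, by the definition of the bilinear map $D^2_{xx}f$,
\[
\frac{d}{d\theta}\big[J_{f,x}(x+\theta u,t)h\big]=D^2_{xx}f(x+\theta u,t)(u,h).
\]
Integrating over $\theta\in[0,1]$, taking norms, and using $\|D^2_{xx}f(y,t)(u,h)\|\le\|D^2_{xx}f(y,t)\|\,\|u\|\,\|h\|$ directly from the definition of $\|D^2_{xx}f(y,t)\|$, I get
\[
\big\|\big(J_{f,x}(x+u,t)-J_{f,x}(x,t)\big)h\big\|\le\int_0^1\big\|D^2_{xx}f(x+\theta u,t)(u,h)\big\|\,d\theta\le\|u\|\,\|h\|\sup_{\theta\in[0,1]}\big\|D^2_{xx}f(x+\theta u,t)\big\|.
\]
Dividing by $\|u\|\,\|h\|$ and taking the supremum over $h\neq 0$ (which is exactly the operator norm $\|J_{f,x}(x+u,t)-J_{f,x}(x,t)\|$) gives \cref{eq:curve_bound_J}.

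For \cref{eq:curve_bound_f} I would apply Taylor's theorem with integral remainder to $\phi(\theta):=f(x+\theta u,t)$, noting $\phi'(\theta)=J_{f,x}(x+\theta u,t)u$ and, componentwise, $\phi''(\theta)=D^2_{xx}f(x+\theta u,t)(u,u)$, so that
\[
f(x+u,t)-f(x,t)-J_{f,x}(x,t)u=\phi(1)-\phi(0)-\phi'(0)=\int_0^1(1-\theta)\,D^2_{xx}f(x+\theta u,t)(u,u)\,d\theta.
\]
Bounding the integrand by $\|u\|^2\sup_{\theta\in[0,1]}\|D^2_{xx}f(x+\theta u,t)\|$ and using $\int_0^1(1-\theta)\,d\theta=\frac{1}{2}$ yields \cref{eq:curve_bound_f}. (An alternative is to bootstrap from \cref{eq:curve_bound_J}: write the left-hand side as $\int_0^1\big(J_{f,x}(x+\theta u,t)-J_{f,x}(x,t)\big)u\,d\theta$ and apply \cref{eq:curve_bound_J} with $\theta u$ in place of $u$, then integrate; this again produces the factor $\int_0^1\theta\,d\theta=\frac{1}{2}$.)

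I do not expect a genuine obstacle here; the proof is essentially bookkeeping. The only points requiring a little care are (a) verifying the chain-rule identities for the derivatives of the matrix-valued map $\theta\mapsto J_{f,x}(x+\theta u,t)$ and the vector-valued map $\phi$, which rests on \Cref{as:set_and_function}(b), and (b) being precise about the submultiplicativity $\|D^2_{xx}f(y,t)(u,h)\|\le\|D^2_{xx}f(y,t)\|\,\|u\|\,\|h\|$ used to collapse the integrand, which is immediate from the stated definition of $\|D^2_{xx}f\|$.
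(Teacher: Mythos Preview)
Your proof is correct. The paper does not actually provide a proof of \Cref{lemma:curve_lemma}; it is stated as a ``mean-value type lemma'' and then used without justification, presumably because the argument is regarded as standard. Your treatment---integrating the derivative of $\theta\mapsto J_{f,x}(x+\theta u,t)h$ for \cref{eq:curve_bound_J} and applying Taylor's theorem with integral remainder to $\phi(\theta)=f(x+\theta u,t)$ for \cref{eq:curve_bound_f}---is exactly the natural way to make this precise, and the submultiplicativity step $\|D^2_{xx}f(y,t)(u,h)\|\le\|D^2_{xx}f(y,t)\|\,\|u\|\,\|h\|$ is, as you note, immediate from the paper's definition of $\|D^2_{xx}f\|$.
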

The argument $\delta\in\mathbb{R}_{++}$ in the definitions \cref{eq:def_Mnc}--\cref{eq:def_capD} represents the radius of the ball centered at the KKT point, i.e., the local region around the KKT point we are interested in.

Let the ``nonconvex part'' of the Lagrangian be
$$
\begin{aligned}
\mathcal{L}^{nc}(x,\lambda,t) &:=c(x,t)+\lambda^T f^{nc}(x,t),
&\qquad &t\in[0,S], \\
\mathcal{L}^{nc}_\tau(x,\lambda) &:=
\mathcal{L}^{nc}(x,\lambda,\tau\Delta_T),
&\qquad &\tau=1,2,\ldots,T.
\end{aligned}
$$
We also define
\begin{align}
\overline H_{\mathcal{L}^{nc}}(u,t)
&:=\int_0^1 \nabla_{xx}^2 \mathcal{L}^{nc}(x^\ast(t)+\theta u,\lambda^\ast(t),t)\,d\theta,
\label{eq:def_HL}\\
\overline{H}_{f^c_{i}}(u,t)
&:=\int_0^1 2(1-\theta)\nabla^2_{xx} f^c_{i}(x^\ast (t) +\theta u,t)\,d\theta,
\label{eq:def_Hfc}
\end{align}
and
$$
\begin{aligned}
\rho^{(\mathrm{P})}(\delta,\alpha,\eta,\epsilon) &:= \sup_{\substack{t\in[0,S], \\ u:\|u\|\leq\delta}}
\left\|
\left(I-\alpha\overline H_{\mathcal{L}^{nc}}(u,t)\right)^2
-\alpha(1-\eta\alpha\epsilon)\sum_{i=1}^m \lambda^\ast_{i}(t)\overline H_{f^c_{i}}(u,t)
\right\|, \\
\rho(\delta,\alpha,\eta,\epsilon) &:= \Bigg[
\max\left\{
\rho^{(\mathrm{P})}(\delta,\alpha,\eta,\epsilon),
(1-\eta\alpha\epsilon)^2
\right\}+ 
\alpha(1-\eta\alpha\epsilon)
\frac{\sqrt{\eta}\delta M_{nc}(\delta)}{2} \\
&\qquad+\alpha^2\,
\Bigg(2\sup_{\substack{t\in[0,S], \\u:\|u\|\leq\delta}}\left\|\eta\epsilon I-\overline H_{\mathcal{L}^{nc}}(u,t)\right\|D(\delta,\eta)
+D^2(\delta,\eta)\Bigg)\Bigg]^{1/2}, \\
\kappa(\delta,\alpha,\eta,\epsilon)
&:=\max\left\{
1,
\, \frac{1-\eta\alpha\epsilon}{\rho(\delta,\alpha,\eta,\epsilon)},
\, \frac{\sqrt{\eta} \alpha L_f(\delta)}{\rho(\delta,\alpha,\eta,\epsilon)}
\right\}.
\end{aligned}
$$

Now we present the main result regarding the tracking error bound of the regularized primal-dual gradient algorithm \cref{eq:RegPPD}.
\begin{theorem}\label{theorem:tracking_performance}
Suppose there exist parameters $\delta>0$, $\alpha>0$, $\eta>0$ and $\epsilon>0$ such that
\begin{subequations}\label{eq:main_thm_cond}
\begin{equation}\label{eq:main_thm_cond1}
\sigma_{\eta}\Delta_T \leq \left(1-\rho(\delta,\alpha,\eta,\epsilon)\right)\delta 
-\kappa(\delta,\alpha,\eta,\epsilon) \sqrt{\eta}\alpha\epsilon M_\lambda.
\end{equation}
Let the initial point $\hat z_0=(\hat x_0,\hat\lambda_0)$ be sufficiently close to $z^\ast_1$ so that
\begin{equation}\label{eq:main_thm_cond2_a}
\left\|\hat z_0-z^\ast_1\right\|_{\eta}\leq \delta.
\end{equation}
\end{subequations}
Then the sequence $(\hat z_\tau)_{\tau=1,\ldots,T}$ produced by the regularized primal-dual gradient algorithm \cref{eq:RegPPD} satisfies
\begin{equation}\label{eq:main_tracking_error}
\begin{aligned}
\left\|\hat z_\tau-z^\ast_\tau\right\|_{\eta}
\leq\ &
\frac{
\rho(\delta,\alpha,\eta,\epsilon)\, \sigma_{\eta}\Delta_T
+\kappa(\delta,\alpha,\eta,\epsilon)\sqrt{\eta}\alpha\epsilon M_\lambda
}{1-\rho(\delta,\alpha,\eta,\epsilon)} \\
&+\rho^\tau(\delta,\alpha,\eta,\epsilon)
\left(\left\|\hat z_0-z^\ast_1\right\|_{\eta}
-\frac{ \sigma_{\eta}\Delta_T
+\kappa(\delta,\alpha,\eta,\epsilon)\sqrt{\eta}\alpha\epsilon M_\lambda
}{1-\rho(\delta,\alpha,\eta,\epsilon)}\right)
\end{aligned}
\end{equation}
for all $\tau=1,2,\ldots,T$.

Moreover, we have
$$
\lim_{\alpha\rightarrow 0^+} \kappa\left(\delta,\alpha,\eta,\epsilon\right)=1
\qquad
\textrm{and}\qquad
\kappa\left(\delta,\alpha,\eta,\epsilon\right) \leq\sqrt{2}.
$$
\end{theorem}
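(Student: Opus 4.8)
The plan is to establish a per-step contraction inequality for the tracking error and then iterate it to obtain the geometric-series bound \cref{eq:main_tracking_error}; the final two limits for $\kappa$ are then a direct calculation from its definition. First I would analyze a single update of \cref{eq:RegPPD} with the problem data frozen at time $\tau$: I expand $\hat z_\tau - z^\ast_\tau$ by writing $\hat z_\tau$ as the projected regularized gradient step applied to $\hat z_{\tau-1}$, and also expressing $z^\ast_\tau$ as a fixed point of the analogous step for the saddle-point problem \cref{eq:saddle_point1} (up to the $\epsilon M_\lambda$-type residual coming from the fact that $z^\ast_\tau$ solves the unregularized KKT system, not the regularized one). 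Using nonexpansiveness of the projections $\mathcal{P}_{\mathcal{X}_\tau}$ and $\mathcal{P}_{\mathbb{R}^m_+}$ with respect to $\|\cdot\|_\eta$, and the mean-value identities of \Cref{lemma:curve_lemma} together with the integral Hessian forms \cref{eq:def_HL}--\cref{eq:def_Hfc}, I would bound the linear part of the step by a matrix whose $\|\cdot\|_\eta$-norm is controlled by $\rho^{(\mathrm{P})}$, the quadratic remainder terms by the $\delta M_{nc}(\delta)$ and $\alpha^2(\ldots)$ terms, and the regularization residual by $\kappa\sqrt{\eta}\alpha\epsilon M_\lambda$; collecting these shows that the frozen-time step contracts the error by the factor $\rho(\delta,\alpha,\eta,\epsilon)$ plus that additive residual, provided the iterate stays in the ball of radius $\delta$. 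The role of $\kappa$ is to convert bounds stated in the $\|\cdot\|$-norm on the separate primal/dual blocks into bounds in the composite $\|\cdot\|_\eta$-norm, which is why it appears multiplying the residual and must be tracked carefully.

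Next I would incorporate the time variation: going from the problem at $\tau-1$ to the one at $\tau$ moves the target from $z^\ast_{\tau-1}$ to $z^\ast_\tau$, and by the definition of $\sigma_\eta$ and $\Delta_T = S/T$ we have $\|z^\ast_\tau - z^\ast_{\tau-1}\|_\eta \le \sigma_\eta \Delta_T$. Combining the frozen-time contraction with this drift via the triangle inequality gives the recursion
\begin{equation*}
\|\hat z_\tau - z^\ast_\tau\|_\eta \le \rho\,\|\hat z_{\tau-1} - z^\ast_{\tau-1}\|_\eta + \sigma_\eta\Delta_T + \kappa\sqrt{\eta}\alpha\epsilon M_\lambda,
\end{equation*}
where I suppress the arguments of $\rho$ and $\kappa$. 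The crucial bookkeeping point is that this recursion is only valid as long as $\|\hat z_{\tau-1} - z^\ast_{\tau-1}\|_\eta \le \delta$ (so that the Hessian suprema over the radius-$\delta$ ball apply); I would verify by induction that the hypothesis \cref{eq:main_thm_cond1}, which is exactly the statement that the fixed point $\frac{\sigma_\eta\Delta_T + \kappa\sqrt{\eta}\alpha\epsilon M_\lambda}{1-\rho}$ of the affine recursion is $\le \delta$, together with the initialization \cref{eq:main_thm_cond2_a}, keeps every iterate inside the ball. Granting that, unrolling the affine recursion is the standard computation: subtract the fixed point from both sides, observe that the shifted sequence contracts by $\rho$ at each step, and re-add the fixed point to obtain exactly \cref{eq:main_tracking_error}.

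For the last two assertions, I would go back to the definition $\kappa = \max\{1,\ (1-\eta\alpha\epsilon)/\rho,\ \sqrt{\eta}\alpha L_f(\delta)/\rho\}$. As $\alpha \to 0^+$, inspecting the definition of $\rho$ shows $\rho \to [\max\{\rho^{(\mathrm{P})},1\}]^{1/2}$ and $\rho^{(\mathrm{P})} \to 1$ (since $(I - \alpha\overline H)^2 \to I$ and the $\alpha$-weighted $f^c$ term vanishes), so $\rho \to 1$; meanwhile $1-\eta\alpha\epsilon \to 1$ and $\sqrt{\eta}\alpha L_f(\delta) \to 0$, giving $\kappa \to \max\{1,1,0\} = 1$. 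For the uniform bound $\kappa \le \sqrt 2$, the point is that $\rho \ge (1-\eta\alpha\epsilon)$ always (because inside the bracket defining $\rho$ one takes a max with $(1-\eta\alpha\epsilon)^2$ and then adds nonnegative terms), so the middle term of the max is $\le 1$; and $\rho \ge \sqrt{\eta}\alpha L_f(\delta)/\sqrt 2$ — this inequality should fall out of comparing $\rho^{(\mathrm P)}$ or the $\alpha^2 D^2(\delta,\eta)$ contribution with $\eta\alpha^2 L_f(\delta)^2$, using $D(\delta,\eta) \ge \sqrt{\eta}L_f(\delta)$ from \cref{eq:def_capD} — so the third term of the max is $\le \sqrt 2$, whence $\kappa \le \sqrt 2$. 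I expect the main obstacle to be the first paragraph: carefully organizing the one-step expansion so that exactly the matrix $\left(I-\alpha\overline H_{\mathcal{L}^{nc}}\right)^2 - \alpha(1-\eta\alpha\epsilon)\sum_i \lambda^\ast_i \overline H_{f^c_i}$ emerges as the relevant linear operator in the $\|\cdot\|_\eta$ geometry — this requires squaring the block-triangular step operator correctly, tracking the $(1-\eta\alpha\epsilon)$ factors coming from the dual regularization, and cleanly separating the convex part $f^c$ (where the second-order Taylor remainder \cref{eq:curve_bound_f} with the $2(1-\theta)$ weight helps) from the nonconvex part $f^{nc}$ (absorbed into $M_{nc}(\delta)$) and from the cross terms bounded by $D(\delta,\eta)$.
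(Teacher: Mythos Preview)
Your plan matches the paper's argument: the paper isolates the one-step estimate as \cref{lemma:approx_contraction} (exactly your ``frozen-time contraction'') and then runs the same induction/unrolling you describe; the claims about $\kappa$ are likewise proved by the direct computation you sketch, using $D(\delta,\eta)\ge\sqrt{\eta}L_f(\delta)$ and $\rho^2\ge(1-\eta\alpha\epsilon)^2+\alpha^2D^2$.

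Two small bookkeeping corrections you should make so that you land on \cref{eq:main_tracking_error} exactly rather than on a slightly looser bound. First, the contraction hypothesis is $\|\hat z_{\tau-1}-z^\ast_\tau\|_\eta\le\delta$, not $\|\hat z_{\tau-1}-z^\ast_{\tau-1}\|_\eta\le\delta$, since the step at time $\tau$ uses the data and the KKT point at time $\tau$; the induction therefore has to propagate the former quantity. Second, once you apply the triangle inequality \emph{before} the contraction, the drift picks up a factor $\rho$: $\|\hat z_\tau-z^\ast_\tau\|_\eta\le\rho(\|\hat z_{\tau-1}-z^\ast_{\tau-1}\|_\eta+\sigma_\eta\Delta_T)+\kappa\sqrt{\eta}\alpha\epsilon M_\lambda$, so the affine recursion has forcing $\rho\sigma_\eta\Delta_T+\kappa\sqrt{\eta}\alpha\epsilon M_\lambda$, not $\sigma_\eta\Delta_T+\kappa\sqrt{\eta}\alpha\epsilon M_\lambda$. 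Combined with anchoring the base case at $\|\hat z_0-z^\ast_1\|_\eta$ (which is how \cref{eq:main_thm_cond2_a} is stated and which avoids a spurious drift term at $\tau=1$), this gives precisely the constant $\frac{\rho\sigma_\eta\Delta_T+\kappa\sqrt{\eta}\alpha\epsilon M_\lambda}{1-\rho}$ and the offset $\frac{\sigma_\eta\Delta_T+\kappa\sqrt{\eta}\alpha\epsilon M_\lambda}{1-\rho}$ that appear in \cref{eq:main_tracking_error}. With these fixes your argument is the paper's.
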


The proof of \cref{theorem:tracking_performance} is based on the following lemma whose proof is postponed to \Cref{sec:proof_approx_contraction}.
\begin{lemma}\label{lemma:approx_contraction}
Let $\tau\in\{1,2,\ldots,T\}$ be arbitrary. If $\left\|\hat z_{\tau-1}-z^\ast_\tau\right\|_{\eta}\leq\delta$ and $\hat z_{\tau}$ is generated by \cref{eq:RegPPD}, then
\begin{equation}
\label{eq:approx_contraction}
\left\|\hat z_{\tau}-z^\ast_\tau\right\|_{\eta}
\leq \rho(\delta,\alpha,\eta,\epsilon)
\left\|\hat z_{\tau-1}-z^\ast_\tau\right\|_{\eta}
+\kappa(\delta,\alpha,\eta,\epsilon)
\sqrt{\eta}\alpha\epsilon M_{\lambda},
\end{equation}
where $\kappa(\delta,\alpha,\eta,\epsilon)$ is upper bounded by $\sqrt{2}$ and satisfies
$$
\lim_{\alpha\rightarrow 0^+}\kappa(\delta,\alpha,\eta,\epsilon)=1.
$$
\end{lemma}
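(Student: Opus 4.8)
\textbf{Proof proposal for \cref{lemma:approx_contraction}.}

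The plan is to split the one-step error into a ``frozen-time'' part and a ``time-drift'' part. Since $\hat z_\tau$ is obtained from $\hat z_{\tau-1}$ by one step of \cref{eq:RegPPD} using the data at time $\tau$, I first introduce the auxiliary point $\tilde z_\tau$ obtained by applying the \emph{same} map \cref{eq:RegPPD} but started from $z^\ast_\tau$ instead of $\hat z_{\tau-1}$ — i.e. the exact fixed-point of the regularized saddle-point operator would be the solution of \cref{eq:saddle_point1}, but here I want to compare against $z^\ast_\tau$ directly. Writing $\mathcal{T}_\tau$ for the primal-dual operator in \cref{eq:RegPPD} (projection composed with the gradient step of $\mathcal{L}^\epsilon_\tau$), we have $\hat z_\tau=\mathcal{T}_\tau(\hat z_{\tau-1})$, and I would estimate
$$
\|\hat z_\tau-z^\ast_\tau\|_\eta \le \|\mathcal{T}_\tau(\hat z_{\tau-1})-\mathcal{T}_\tau(z^\ast_\tau)\|_\eta + \|\mathcal{T}_\tau(z^\ast_\tau)-z^\ast_\tau\|_\eta.
$$
The second term is a \emph{residual}: how far $z^\ast_\tau$ is from being a fixed point of the \emph{regularized} map. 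Because $z^\ast_\tau$ satisfies the true KKT conditions \cref{eq:KKT_form1} (not the regularized ones), the only discrepancy is the regularization term $-\epsilon(\lambda-\lambda_{\mathrm{prior}})$ in \cref{eq:RegPPD:dual1}; using nonexpansiveness of $\mathcal{P}_{\mathbb{R}^m_+}$ and that the primal component of $z^\ast_\tau$ is already a fixed point of the primal projection (by \cref{eq:KKT_form1:primal} and the standard projection characterization of the normal-cone inclusion), this residual is exactly $\eta^{-1/2}\cdot \eta\alpha\epsilon\|\lambda^\ast_\tau-\lambda_{\mathrm{prior}}\| \le \sqrt{\eta}\,\alpha\epsilon M_\lambda$ in the $\|\cdot\|_\eta$ norm. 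That accounts for the additive term, with constant $1$; the factor $\kappa$ enters because of how I treat the first term.

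The heart of the argument is the contraction estimate $\|\mathcal{T}_\tau(\hat z_{\tau-1})-\mathcal{T}_\tau(z^\ast_\tau)\|_\eta \le \rho(\delta,\alpha,\eta,\epsilon)\|\hat z_{\tau-1}-z^\ast_\tau\|_\eta$, valid on the ball $\|\hat z_{\tau-1}-z^\ast_\tau\|_\eta\le\delta$. Set $e=\hat z_{\tau-1}-z^\ast_\tau=(e_x,e_\lambda)$. Using nonexpansiveness of both projections, it suffices to bound the Euclidean-type norm of the gradient-step difference. I would Taylor-expand the primal gradient map: $\nabla c_\tau(\hat x_{\tau-1})+J_{f_\tau}(\hat x_{\tau-1})^T\hat\lambda_{\tau-1}$ minus the same at $(x^\ast_\tau,\lambda^\ast_\tau)$ splits, using the convex/nonconvex decomposition $f=f^c+f^{nc}$, into (a) a Hessian-of-$\mathcal{L}^{nc}$ term handled by the averaged Hessian $\overline H_{\mathcal{L}^{nc}}$ from \cref{eq:def_HL}, (b) a term $J_{f^c_\tau}(x^\ast_\tau)^T e_\lambda$ coupling into the dual error, (c) a second-order remainder in $e_x$ from the nonlinearity of $f^c$, bounded by $M_c(\delta)$ and absorbed into $D(\delta,\eta)$ via \cref{lemma:curve_lemma}, and (d) the $J_{f^{nc}_\tau}$-difference, bounded via \cref{lemma:curve_lemma} by $M_{nc}(\delta)\|e_x\|$. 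Symmetrically, the dual gradient map $f_\tau(\hat x_{\tau-1})-\epsilon(\hat\lambda_{\tau-1}-\lambda_{\mathrm{prior}})$ minus its value at the KKT point produces $J_{f_\tau}(x^\ast_\tau)e_x-\epsilon e_\lambda$ plus a second-order remainder bounded by $\tfrac12 (M_c(\delta)+M_{nc}(\delta))\|e_x\|^2$ through \cref{eq:curve_bound_f}. Then I would assemble the block $2\times 2$ ``iteration matrix'' acting on $(e_x,\sqrt{\eta^{-1}}e_\lambda)$: the $(1,1)$ block is $(I-\alpha\overline H_{\mathcal{L}^{nc}})$, the off-diagonal blocks are $\mp\sqrt{\eta}\,\alpha J_{f^c_\tau}(x^\ast_\tau)$-type terms (after rescaling), and the $(2,2)$ block is $(1-\eta\alpha\epsilon)I$. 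Squaring this matrix — i.e. bounding $\|M^T M\|$ rather than $\|M\|$ — and collecting the cross terms is precisely what produces the expression inside the bracket defining $\rho^2$: the $\rho^{(\mathrm{P})}$ term is $\|(I-\alpha\overline H_{\mathcal{L}^{nc}})^2 - \alpha(1-\eta\alpha\epsilon)\sum_i\lambda^\ast_i \overline H_{f^c_i}\|$ coming from the primal diagonal of $M^TM$ plus the symmetrized $J_{f^c}^T J_{f^c}$-free coupling; the $(1-\eta\alpha\epsilon)^2$ is the dual diagonal; the $\alpha(1-\eta\alpha\epsilon)\sqrt{\eta}\delta M_{nc}(\delta)/2$ term bundles the $f^{nc}$ nonlinearity contributions; and the final $\alpha^2(\cdots)$ term collects everything of order $\alpha^2$ involving $D(\delta,\eta)$. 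The factor $\kappa$ is then defined so that $\kappa\rho$ is a valid Lipschitz-type bound for the ``clean'' linear part when measured against $\sqrt{\eta}\alpha\epsilon M_\lambda$ in the $\|\cdot\|_\eta$-norm: the three entries in the $\max$ defining $\kappa$ are the norm-ratios $1$, $(1-\eta\alpha\epsilon)/\rho$, and $\sqrt{\eta}\alpha L_f(\delta)/\rho$ that arise when one tracks how the residual term propagates through one application of $\mathcal{T}_\tau$, and $\rho\to 1$, $L_f\alpha\to 0$, $\eta\alpha\epsilon\to 0$ as $\alpha\to 0^+$ forces $\kappa\to 1$, while $\kappa\le\sqrt2$ follows from $\rho\ge 1-\eta\alpha\epsilon$ and $\rho\ge \sqrt{\eta}\alpha L_f(\delta)/\sqrt 2$-type lower bounds that hold whenever the hypothesis \cref{eq:main_thm_cond1} is nonvacuous.

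The main obstacle will be the bookkeeping in the squared-matrix step: getting all the second-order (in $e_x$) remainder terms, which are genuinely nonlinear in the error, to collapse into exactly the $M_{nc}(\delta)$- and $D(\delta,\eta)$-weighted pieces of $\rho$ without losing the cancellation between $(I-\alpha\overline H_{\mathcal{L}^{nc}})^2$ and the $\overline H_{f^c_i}$ correction — the latter is what encodes ``sufficient convexity around the trajectory'' and must not be bounded away too crudely. A secondary technical point is justifying the integral (averaged-Hessian) representations \cref{eq:def_HL}–\cref{eq:def_Hfc}: these are exact Taylor-with-integral-remainder identities valid because of the twice-continuous-differentiability in \cref{as:set_and_function}(b), and one must check the $\theta$-integrals land inside the radius-$\delta$ ball, which is where the standing hypothesis $\|\hat z_{\tau-1}-z^\ast_\tau\|_\eta\le\delta$ (hence $\|e_x\|\le\delta$) is used. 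Finally, I would verify the claimed properties of $\kappa$ separately at the end, since they do not depend on the contraction estimate itself but only on the algebraic form of $\rho$ and $\kappa$.
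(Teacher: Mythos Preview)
Your triangle-inequality split $\|\hat z_\tau-z^\ast_\tau\|_\eta\le\|\mathcal{T}_\tau(\hat z_{\tau-1})-\mathcal{T}_\tau(z^\ast_\tau)\|_\eta+\|\mathcal{T}_\tau(z^\ast_\tau)-z^\ast_\tau\|_\eta$ is a genuinely different route from the paper: the paper never introduces $\mathcal{T}_\tau(z^\ast_\tau)$ but instead compares $\hat\lambda_\tau$ directly against the \emph{unregularized} dual fixed point $\lambda^\ast_\tau=\mathcal{P}_{\mathbb{R}^m_+}[\lambda^\ast_\tau+\eta\alpha f_\tau(x^\ast_\tau)]$ and expands the full squared norm. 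That direct expansion is what produces cross terms between $(\hat\lambda_{\tau-1}-\lambda^\ast_\tau)$ and $(\lambda^\ast_\tau-\lambda_{\mathrm{prior}})$, and those cross terms are exactly the source of $\kappa>1$. Your split isolates the residual cleanly, so if you carry out the contraction estimate you should obtain the lemma with $\kappa=1$, which is stronger than stated. Consequently your whole paragraph about how $\kappa$ ``tracks how the residual term propagates through one application of $\mathcal{T}_\tau$'' is out of place in your own argument; and the $\kappa\le\sqrt 2$ bound in the paper is unconditional (it comes from $\rho^2\ge(1-\eta\alpha\epsilon)^2+\alpha^2 D^2(\delta,\eta)\ge\tfrac12(1-\eta\alpha\epsilon+\alpha D(\delta,\eta))^2$), not from the hypothesis \cref{eq:main_thm_cond1}.

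The real gap is in your contraction estimate. Your ``assemble the block $2\times2$ iteration matrix and bound $\|M^TM\|$'' picture, with the $f^c$-nonlinearity ``absorbed into $D(\delta,\eta)$'' as a remainder, cannot produce the term $-\alpha(1-\eta\alpha\epsilon)\sum_i\lambda^\ast_{\tau,i}\overline H_{f^c_i}$ inside $\rho^{(\mathrm{P})}$. That term is \emph{not} part of any linearized operator; it is a second-order object that emerges only after you combine the primal cross term $-(\hat x_{\tau-1}-x^\ast_\tau)^T\bigl(J_{f^c_\tau}(\hat x_{\tau-1})^T\hat\lambda_{\tau-1}-J_{f^c_\tau}(x^\ast_\tau)^T\lambda^\ast_\tau\bigr)$ with the dual cross term $(\hat\lambda_{\tau-1}-\lambda^\ast_\tau)^T(f_\tau(\hat x_{\tau-1})-f_\tau(x^\ast_\tau))$ and then use two facts: (i) $\hat\lambda_{\tau-1}\in\mathbb{R}^m_+$ together with convexity of each $f^c_{\tau,i}$ to \emph{drop} one Bregman remainder with the correct sign, and (ii) the integral Taylor identity behind \cref{eq:def_Hfc} to rewrite the other as $-\tfrac12(\hat x_{\tau-1}-x^\ast_\tau)^T(\sum_i\lambda^\ast_{\tau,i}\overline H_{f^c_i})(\hat x_{\tau-1}-x^\ast_\tau)$. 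If instead you bound the $f^c$-nonlinearity by $M_c(\delta)$ and push it into $D(\delta,\eta)$, as your step (c) proposes, you lose this negative quadratic entirely and will not recover the stated $\rho$. So the ``bookkeeping'' you flag as the main obstacle is not bookkeeping at all: it is a specific sign argument using convexity, and your outline currently points in the wrong direction on precisely this step.
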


Note that \cref{lemma:approx_contraction} asserts that if the radius $\delta$ is chosen such that $\rho := \rho(\delta,\alpha,\eta,\epsilon) < 1$, iteration \cref{eq:RegPPD} is an approximate local contraction with  coefficient $\rho$ and error term $e := \kappa(\delta,\alpha,\eta,\epsilon)
\sqrt{\eta}\alpha\epsilon M_{\lambda}$. The idea of the proof of \cref{theorem:tracking_performance} is then based on showing that condition \cref{eq:main_thm_cond1} is sufficient to guarantee that the trajectory generated by the algorithm is confined within the contraction region at every time step. Note that \cref{eq:main_thm_cond1} implies 
\begin{equation} \label{eq:ball_contained}
B(z^\ast_\tau, \rho \delta + e) \subseteq B(z^\ast_{\tau+1}, \delta),
\end{equation}
where $B(z, \delta)$ is the ball centered at $z$ with radius $\delta$ (with respect to the norm $\|\cdot \|_{\eta}$). Then, it is possible to show by induction that if $\hat z_0 \in B(z^\ast_{1}, \delta)$, then $\hat z_{\tau-1} \in B(z^\ast_{\tau}, \delta)$ for all $\tau$. The  idea of condition \cref{eq:ball_contained} is illustrated in \cref{fig:approx_contraction} and the formal proof of \cref{theorem:tracking_performance} is given below.

\begin{figure}[tbhp]
    \centering
    \includegraphics[width=.4\textwidth]{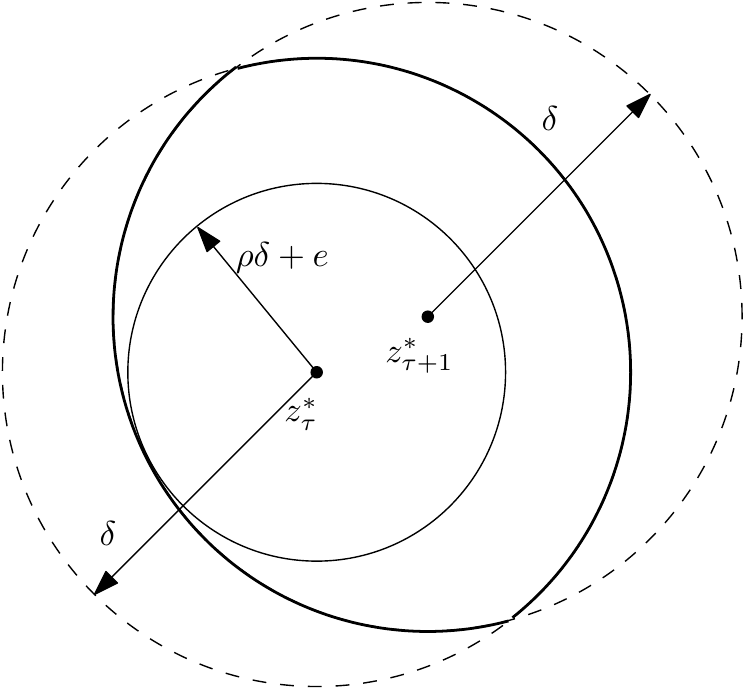}
    \caption{Illustration of condition \cref{eq:ball_contained}. This condition is essentially on: (i) time-variability of a KKT point, (ii) extent of the contraction, and (iii) maximum error in each iteration. Note that in the error-less static case (i.e., $\sigma_\eta = e  = 0$), this condition is trivially satisfied.}
    \label{fig:approx_contraction} 
\end{figure}

\begin{proof}[Proof of \cref{theorem:tracking_performance}]

To simplify the notation, we will just use $\rho$ to denote $\rho(\delta,\alpha,\eta,\epsilon)$ and use $\kappa$ to denote $\kappa(\delta,\alpha,\eta,\epsilon)$. The condition \cref{eq:main_thm_cond2_a} guarantees that we can use \cref{lemma:approx_contraction} to get
$$
\left\|\hat z_1-z^\ast_1\right\|_{\eta}
\leq
\rho\left\|\hat z_0-z^\ast_1\right\|_{\eta}
+\kappa\sqrt{\eta}\alpha\epsilon M_{\lambda},
$$
which shows that \cref{eq:main_tracking_error} holds for $\tau=1$. Now suppose that
\cref{eq:main_tracking_error} holds for some $\tau\in\{1,2,\ldots,T\}$. We have
\begin{equation}\label{eq:proof_main_tracking_error:temp1}
\begin{aligned}
&\left\|\hat z_\tau-z^\ast_{\tau+1}\right\|_{\eta}
\leq \left\|\hat z_\tau-z^\ast_{\tau}\right\|_{\eta}
+\left\|z^\ast_\tau-z^\ast_{\tau+1}\right\|_{\eta}
 \\
\leq\ &
\left(1-\rho^\tau\right)\frac{
\rho\sigma_{\eta}\Delta_T
+\kappa\sqrt{\eta}\alpha\epsilon M_{\lambda}
}{1-\rho}
+\rho^\tau
\left(\left\|\hat z_0-z^\ast_1\right\|_{\eta}
-\sigma_{\eta}\Delta_T\right)
+\sigma_{\eta}\Delta_T
 \\
=\ &
\left(1-\rho^\tau\right)\frac{\sigma_{\eta}\Delta_T
+\kappa\sqrt{\eta}\alpha\epsilon M_{\lambda}
}{1-\rho}
+\rho^\tau
\left\|\hat z_0-z^\ast_1\right\|_{\eta}.
\end{aligned}
\end{equation}
By the conditions \cref{eq:main_thm_cond}, we get
$$
\left\|\hat z_\tau-z^\ast_{\tau+1}\right\|_{\eta}
\leq
\left(1-\rho^\tau\right)\delta
+\rho^\tau\delta
\leq\delta.
$$
We can then use \cref{lemma:approx_contraction} and \cref{eq:proof_main_tracking_error:temp1} to get
$$
\begin{aligned}
&\left\|\hat z_{\tau+1}-z^\ast_{\tau+1}\right\|_{\eta}
\leq \rho\left\|\hat z_\tau-z^\ast_{\tau+1}\right\|_{\eta}
+\kappa\sqrt{\eta}\alpha\epsilon M_{\lambda} \\
\leq\ &
\rho
\left(\left(1-\rho^\tau\right)\frac{\sigma_{\eta}\Delta
+\kappa\sqrt{\eta}\alpha\epsilon M_{\lambda}
}{1-\rho}
+\rho^\tau
\left\|\hat z_0-z^\ast_1\right\|_{\eta}\right)
+\kappa\sqrt{\eta}\alpha\epsilon M_{\lambda} \\
=\ &
\frac{\rho\sigma_{\eta}\Delta_T
+\kappa\sqrt{\eta}\alpha\epsilon M_{\lambda}
}{1-\rho}
+\rho^{\tau+1}
\left(\left\|\hat z_0-z^\ast_1\right\|_{\eta}
-\frac{\sigma_{\eta}\Delta_T
+\kappa\sqrt{\eta}\alpha\epsilon M_{\lambda}
}{1-\rho}\right),
\end{aligned}
$$
and by induction we get \cref{eq:main_tracking_error} for all $\tau\in\{1,2,\ldots,T\}$.
\end{proof}

It can be seen that the bound on the tracking error in \cref{eq:main_tracking_error} consists of a constant term and a term that decays geometrically with $\tau$, as the condition \cref{eq:main_thm_cond1} implies $\rho\left(\delta,\alpha,\eta,\epsilon\right)<1$. We call the constant term
\begin{equation}\label{eq:main_tracking_error2}
\frac{
\rho(\delta,\alpha,\eta,\epsilon)\, \sigma_{\eta}\Delta_T
+\kappa(\delta,\alpha,\eta,\epsilon)\sqrt{\eta}\alpha\epsilon M_\lambda
}{1-\rho(\delta,\alpha,\eta,\epsilon)}
\end{equation}
the \emph{eventual tracking error bound}, which can be further split into two parts:

\noindent (a) The first part
$$
\frac{
\rho(\delta,\alpha,\eta,\epsilon)
}{1-\rho(\delta,\alpha,\eta,\epsilon)}\sigma_{\eta}\Delta_T
$$
is proportional to $\sigma_\eta$, the maximum speed of the optimal primal-dual pair. In time-varying optimization, such terms are common in the tracking error bound \cite{dallanese2016optimal,tang2017real,simonetto2017time,mokhtari2016online}. This term also decreases as one reduces the sampling interval $\Delta_T$.

\noindent (b) The second part
$$
\frac{\kappa(\delta,\alpha,\eta,\epsilon)\sqrt{\eta}\alpha\epsilon M_\lambda
}{1-\rho(\delta,\alpha,\eta,\epsilon)}
$$
is proportional to $M_\lambda$, the maximum distance between the optimal Lagrange multiplier $\lambda^\ast(t)$ and the prior estimate $\lambda_{\mathrm{prior}}$. This term represents the discrepancy introduced by adding regularization on the dual variable; similar behavior has also been observed in \cite{koshal2011multiuser}.

In addition, the first part has a multiplicative factor $\rho(\delta,\alpha,\eta,\epsilon)/(1-\rho(\delta,\alpha,\eta,\epsilon))$, while the second part has a multiplicative factor $1/(1-\rho(\delta,\alpha,\eta,\epsilon))$, which are all strictly increasing in $\rho(\delta,\alpha,\eta,\epsilon)$. This implies that a smaller $\rho(\delta,\alpha,\eta,\epsilon)$ will lead to better tracking performance. The condition \cref{eq:main_thm_cond1} is also more likely to be satisfied when $\rho(\delta,\alpha,\eta,\epsilon)$ is smaller.

\subsection{Feasible algorithmic parameters}
\label{subsec:FeasibleAlgorithmicParameters}

In order that \cref{eq:main_thm_cond1} can be satisfied and the bound \cref{eq:main_tracking_error2} can be as small as possible, one needs to find an appropriate set of the parameters $\alpha,\eta,\epsilon$. However, the expression that defines $\rho(\delta,\alpha,\eta,\epsilon)$ is rather complicated, making it difficult to analyze how to achieve a smaller bound \cref{eq:main_tracking_error2}; even the existence of parameters that can guarantee the condition \cref{eq:main_thm_cond1} is not readily available. In this section, we give a preliminary study of the conditions under which \cref{eq:main_thm_cond1} can be satisfied.

\begin{definition}
We say that $(\delta,\alpha,\eta,\epsilon)\in\mathbb{R}_{++}^4$ is a tuple of \emph{feasible parameters} if
\begin{equation}\label{eq:def_feas_param}
\left(1-\rho(\delta,\alpha,\eta,\epsilon)\right)\delta
-\kappa(\delta,\alpha,\eta,\epsilon)\sqrt{\eta}\alpha\epsilon M_\lambda>0.
\end{equation}
\end{definition}
It can be seen that, if \cref{eq:def_feas_param} is satisfied, then for sufficiently small sampling interval $\Delta_T$ the condition \cref{eq:main_thm_cond1} can be satisfied; otherwise \cref{eq:main_thm_cond1} cannot be satisfied no matter how one reduces the sampling interval $\Delta_T$. The quantity $\delta$ has been added to the tuple of parameters for convenience.

Define
\begin{equation}\label{eq:def_Lambda_m}
\Lambda_m(\delta):= \inf_{t\in[0,S]}\inf_{u:\|u\|\leq\delta}\lambda_{\min}\left(
\overline{H}_{\mathcal{L}
^{nc}}(u,t)+\frac{1}{2}\sum_{i=1}^m\lambda^\ast_{i}(t)\overline{H}_{f^c_{i}}(u,t)\right).
\end{equation}
Roughly speaking, $\Lambda_m(\delta)$ characterizes how convex the problem \cref{eq:main_problem} is in the neighborbood of the optimal point. It's easy to see that $\Lambda_m(\delta)$ is a nonincreasing function in $\delta$.

\begin{theorem}\label{theorem:feas_param}
Suppose there exists some $\bar\delta>0$ such that
\begin{equation}\label{eq:feas_param_cond}
\Lambda_m\big(\bar\delta\big)>M_\lambda\, M_{nc}\big(\bar\delta\big).
\end{equation}
Let
$$
S_{\mathrm{fp}}:=\left\{
(\delta,\alpha,\eta,\epsilon)\in\mathbb{R}_{++}^4:
\left(1-\rho(\delta,\alpha,\eta,\epsilon)\right)\delta
-\kappa(\delta,\alpha,\eta,\epsilon)\sqrt{\eta}\alpha\epsilon M_\lambda>0\right\}.
$$
Then $S_{\mathrm{fp}}$ is a non-empty open subset of $\mathbb{R}_{++}^4$.
\end{theorem}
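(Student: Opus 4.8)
The statement has two halves — openness of $S_{\mathrm{fp}}$, which is a continuity argument, and nonemptiness, where the hypothesis \cref{eq:feas_param_cond} is used — and I would treat them in that order. For openness, the plan is to show that $\rho$ is continuous on $\mathbb{R}_{++}^4$ and $\kappa$ is continuous on $\{\rho>0\}$. Each ingredient of $\rho$ — namely $\rho^{(\mathrm{P})}$, $M_{nc}(\delta)$, $M_c(\delta)$, $L_f(\delta)$, $D(\delta,\eta)$, and $\sup_{t,\|u\|\le\delta}\|\eta\epsilon I-\overline H_{\mathcal{L}^{nc}}(u,t)\|$ — is a supremum over the compact set $[0,S]\times B_n(\delta)$ of a function that is jointly continuous in $(u,t)$ and in the parameters: the integrands in \cref{eq:def_HL}--\cref{eq:def_Hfc} are continuous by \cref{as:set_and_function}(b), and $\lambda^\ast(\cdot)$ is continuous. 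Since $\delta\mapsto B_n(\delta)$ is a continuous, compact-valued correspondence, Berge's maximum theorem makes each of these suprema continuous, hence $\rho$ is continuous on $\mathbb{R}_{++}^4$ and $\kappa$ is continuous on the open set $\{\rho>0\}$. Every point of $S_{\mathrm{fp}}$ has $\rho<1$ and (when $M_\lambda>0$) $\kappa<\infty$, hence $\rho>0$; so on an open neighborhood of $S_{\mathrm{fp}}$ the map $F:=(1-\rho)\delta-\kappa\sqrt{\eta}\alpha\epsilon M_\lambda$ is continuous, and $S_{\mathrm{fp}}=F^{-1}\big((0,\infty)\big)$ is open.

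For nonemptiness I would fix $\delta=\bar\delta$ and study $F(\bar\delta,\alpha,\eta,\epsilon)$ as $\alpha\to0^+$, with $\eta,\epsilon$ to be chosen. The key step is a first-order-in-$\alpha$ bound on $\rho$. Expanding the matrix inside $\rho^{(\mathrm{P})}$,
\[
(I-\alpha\overline H_{\mathcal{L}^{nc}})^2-\alpha(1-\eta\alpha\epsilon)\sum_{i=1}^m\lambda^\ast_i\overline{H}_{f^c_{i}}
= I-2\alpha A(u,t)+\alpha^2 R(u,t),
\]
where $A:=\overline H_{\mathcal{L}^{nc}}+\frac{1}{2}\sum_i\lambda^\ast_i\overline{H}_{f^c_{i}}$ is symmetric with $\lambda_{\min}(A)\ge\Lambda_m(\bar\delta)$ on $B_n(\bar\delta)$ by \cref{eq:def_Lambda_m}, and $R$ is bounded on $[0,S]\times B_n(\bar\delta)$ by \cref{as:set_and_function}(b). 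Hence, for small $\alpha$, $\|I-2\alpha A+\alpha^2R\|\le 1-2\alpha\Lambda_m(\bar\delta)+O(\alpha^2)$ uniformly in $(u,t)$; combining this with $(1-\eta\alpha\epsilon)^2=1-2\eta\alpha\epsilon+O(\alpha^2)$ and with the fact that the remaining two summands in the bracket defining $\rho$ contribute $\alpha\,\frac{\sqrt{\eta}\bar\delta M_{nc}(\bar\delta)}{2}+O(\alpha^2)$ gives
\[
\rho(\bar\delta,\alpha,\eta,\epsilon)^2\ \le\ 1-\alpha\Big(2\min\{\Lambda_m(\bar\delta),\eta\epsilon\}-\frac{\sqrt{\eta}\bar\delta M_{nc}(\bar\delta)}{2}\Big)+C\alpha^2
\]
for a finite $C=C(\bar\delta,\eta,\epsilon)$. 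Using $\sqrt{1-x}\le 1-\frac{x}{2}$ and $\kappa\to1$ as $\alpha\to0^+$ (from \cref{theorem:tracking_performance}), this yields
\[
\liminf_{\alpha\to0^+}\ \frac{F(\bar\delta,\alpha,\eta,\epsilon)}{\alpha}\ \ge\ G(\eta,\epsilon):=\bar\delta\min\{\Lambda_m(\bar\delta),\eta\epsilon\}-\frac{\sqrt{\eta}\bar\delta^2M_{nc}(\bar\delta)}{4}-\sqrt{\eta}\epsilon M_\lambda ,
\]
so it suffices to exhibit $(\eta,\epsilon)\in\mathbb{R}_{++}^2$ with $G(\eta,\epsilon)>0$, since then $F(\bar\delta,\alpha,\eta,\epsilon)>0$ for some small $\alpha>0$, i.e. $(\bar\delta,\alpha,\eta,\epsilon)\in S_{\mathrm{fp}}$.

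The choice of $(\eta,\epsilon)$ is exactly where \cref{eq:feas_param_cond} enters. Restricting to $\eta\epsilon\le\Lambda_m(\bar\delta)$ makes $\min\{\Lambda_m(\bar\delta),\eta\epsilon\}=\eta\epsilon$, so $G(\eta,\epsilon)=\sqrt{\eta}\big(\epsilon(\bar\delta\sqrt{\eta}-M_\lambda)-\frac{\bar\delta^2M_{nc}(\bar\delta)}{4}\big)$, and I need $\epsilon(\bar\delta\sqrt{\eta}-M_\lambda)>\frac{\bar\delta^2M_{nc}(\bar\delta)}{4}$ together with $\eta\epsilon\le\Lambda_m(\bar\delta)$. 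If $M_\lambda>0$, take $\eta=4M_\lambda^2/\bar\delta^2$, which minimizes $s\mapsto s^2/(\bar\delta s-M_\lambda)$ at $s=\sqrt{\eta}$; then these two conditions read $\bar\delta^2M_{nc}(\bar\delta)/(4M_\lambda)<\epsilon\le\Lambda_m(\bar\delta)\bar\delta^2/(4M_\lambda^2)$, a nonempty interval precisely because $\Lambda_m(\bar\delta)>M_\lambda M_{nc}(\bar\delta)$, so pick any $\epsilon$ in it. If $M_\lambda=0$, \cref{eq:feas_param_cond} says $\Lambda_m(\bar\delta)>0$, and it suffices to take $\eta\epsilon=\Lambda_m(\bar\delta)/2$ with $\eta$ small enough that $\sqrt{\eta}\,\bar\delta M_{nc}(\bar\delta)<2\Lambda_m(\bar\delta)$. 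In all cases $G(\eta,\epsilon)>0$, so $S_{\mathrm{fp}}\neq\emptyset$. I expect the two delicate points to be the uniform first-order control of $\rho^{(\mathrm{P})}$ (leaning on the symmetry and the eigenvalue bound of $A$ together with a perturbation estimate for $\|I-2\alpha A+\alpha^2 R\|$), and the observation that the one-variable minimization of $s^2/(\bar\delta s-M_\lambda)$ is exactly what makes the feasibility threshold coincide with $M_\lambda M_{nc}(\bar\delta)$, matching \cref{eq:feas_param_cond}.
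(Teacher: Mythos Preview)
Your proposal is correct and follows essentially the same route as the paper: continuity of the defining function gives openness (you invoke Berge's maximum theorem where the paper proves an equivalent ad hoc lemma on suprema over compact sets), and a first-order Taylor expansion of $\rho$ in $\alpha$ reduces nonemptiness to exhibiting $(\eta,\epsilon)$ with $G(\eta,\epsilon)>0$. Your explicit parameter selections differ from the paper's --- in the $M_\lambda>0$ case you fix $\eta=4M_\lambda^2/\bar\delta^2$ and locate an interval of admissible $\epsilon$, whereas the paper sets $\eta\epsilon=\Lambda_m(\bar\delta)$ and solves for $\eta$; in the $M_\lambda=0$ case you keep $\delta=\bar\delta$ and shrink $\eta$, whereas the paper fixes $\eta$ and may shrink $\delta$ --- but both choices hit exactly the threshold $\Lambda_m(\bar\delta)>M_\lambda M_{nc}(\bar\delta)$, so the arguments are interchangeable.
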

\begin{proof}
We first introduce an auxiliary lemma, whose proof is given in \Cref{sec:proof_sup_continuous}.
\begin{lemma}\label{lemma:sup_continuous}
The following two statements hold:
\begin{enumerate}[itemindent=13.5pt,leftmargin=0pt]
\item Suppose $X$ and $Y$ are metric spaces with $X$ being compact, and $f:X\times Y\rightarrow\mathbb{R}$ is a continuous function. Let $g:Y\rightarrow\mathbb{R}$ be given by
$$
g(y)=\sup_{x\in X}f(x,y).
$$
Then $g$ is a continuous function.
\item Suppose $f:B_n(R)\times V\rightarrow\mathbb{R}$ is a continuous function for some $R>0$ and $V$ is a metric space. Define $g:(0,R)\times V\rightarrow\mathbb{R}$ by
$$
g(r,v)=\sup_{u:\|u\|\leq r} f(u,v)
$$
Then $g$ is a continuous function on $(0,R)\times V$.
\end{enumerate}
\end{lemma}

Denote
$$
\Lambda_M(\delta):= \sup_{t\in[0,S]}\sup_{u:\|u\|\leq\delta}\lambda_{\max}\left(
\overline{H}_{\mathcal{L}^{nc}}(u,t)+\frac{1}{2}\sum_{i=1}^m\lambda^\ast_{i}(t)\overline{H}_{f^c_{i}}(u,t)\right).
$$
Let $R>\bar\delta$ be arbitrary. We first show that $\overline{H}_{\mathcal{L}^{nc}}(u,t)$ and each $\overline{H}_{f^c_i}(u,t)$ are continuous over $(u,t)\in {B}_n(R)\times[0,S]$. Indeed, by their definitions \cref{eq:def_HL} and \cref{eq:def_Hfc}, any entry of these matrices can be written in the form
$$
\int_{0}^1 g(\theta,u,t)\,d\theta,
$$
where $g$ is some continuous function over $(\theta,u,t)\in[0,1]\times {B}_n(R)\times[0,S]$ as we have assumed the continuity of $\nabla^2_{xx} c(x,t)$ and each $\nabla^2_{xx}f^c_i(x,t)$, $\nabla^2_{xx}f^{nc}_i(x,t)$, $\nabla^2_{xx}f^{eq}_j(x,t)$. Since $[0,1]\times {B}_n(R)\times[0,S]$ is a compact set, $g(\theta,u,t)$ is bounded above. By the dominated convergence theorem, $\int_{0}^1 g(\theta,u,t)\,d\theta$ is continuous over $(u,t)\in {B}_n(R)\times[0,S]$. 

As a consequence of the continuity of $\overline{H}_{\mathcal{L}^{nc}}(u,t)$ and each $\overline{H}_{f^c_i}(u,t)$, $\Lambda_M(\delta)$ is finite for all $\delta\leq R$.

Let
$$
f_R(\delta,\alpha,\eta,\epsilon)
:=\left(1-\rho(\delta,\alpha,\eta,\epsilon\right))\delta
-\kappa(\delta,\alpha,\eta,\epsilon)\sqrt{\eta}\alpha\epsilon M_{\lambda}
$$
for $\left(\delta,\alpha,\eta,\epsilon\right) \in (0,R)\times\mathbb{R}_{++}^3$, and let us consider the Taylor expansion of $f_R(\delta,\alpha,\eta,\epsilon)$ with respect to $\alpha$ as $\alpha\rightarrow 0^+$. We have
\begin{equation}\label{eq:proof_feas_param_rhomat}
\begin{aligned}
&\left(I-\alpha\overline{H}_{\mathcal{L}^{nc}}(u,t)\right)^2-\alpha(1-\eta\alpha\epsilon)\sum_{i=1}^m\lambda^\ast_{i}(t)\overline{H}_{f^c_{i}}(u,t) \\
=\,&
I-2\alpha\left(\overline{H}_{\mathcal{L}^{nc}}(u,t)+\frac{1}{2}\sum_{i=1}^m\lambda^\ast_{i}(t)\overline{H}_{f^c_{i}}(u,t)\right)+ \alpha^2Q(u,t,\eta,\epsilon),
\end{aligned}
\end{equation}
where $Q(u,t,\eta,\epsilon)$ is some positive semidefinite matrix that depends continuously on $(u,t,\eta,\epsilon)$. It can be checked that for $\alpha
<\left(2\Lambda_M(R)\right)^{-1}$, we have
$$
0\prec 2\alpha\left(\overline{H}_{\mathcal{L}^{nc}}(u,t)+\frac{1}{2}\sum_{i=1}^m\lambda^\ast_{i}(t)\overline{H}_{f^c_{i}}(u,t)\right)
\prec I
$$
whenever $\|u\|<R$, and so
$$
\begin{aligned}
&\left\|I-2\alpha\left(\overline{H}_{\mathcal{L}^{nc}}(u,t)+\frac{1}{2}\sum_{i=1}^m\lambda^\ast_{i}(t)\overline{H}_{f^c_{i}}(u,t)\right)\right\| \\
=\ &
1-2\alpha
\lambda_{\min}\left(\overline{H}_{\mathcal{L}^{nc}}(u,t)+\frac{1}{2}\sum_{i=1}^m\lambda^\ast_{i}(t)\overline{H}_{f^c_{i}}(u,t)\right)
\end{aligned}
$$
whenever $\|u\|< R$ and $\alpha
<\left(2\Lambda_M(R)\right)^{-1}$. By \cref{eq:proof_feas_param_rhomat}, we get
$$
\begin{aligned}
& \sup_{t\in[0,S]}\sup_{u:\|u\|\leq\delta}\left\|\left(I-\alpha\overline{H}_{\mathcal{L}^{nc}}(u,t)\right)^2-\alpha(1-\eta\alpha\epsilon)\sum_{i=1}^m\lambda^\ast_{i}(t)\overline{H}_{f^c_{i}}(u,t)\right\| \\
=\, & 
1-2\alpha\Lambda_m(\delta)
+O\big(\alpha^2\big)
\end{aligned}
$$
as $\alpha\rightarrow 0^+$ for any $\delta\in(0,R)$. Consequently, by Taylor expansion of $\rho\left(\delta,\alpha,\eta,\epsilon\right)$ with respect to $\alpha$ and using the properties of $\kappa\left(\delta,\alpha,\eta,\epsilon\right)$, we can show that
\begin{equation}\label{eq:proof_feas_param:taylor_rho}
\rho(\delta,\alpha,\eta,\epsilon)
=1-\alpha\left(\min\left\{\Lambda_m(\delta),\eta\epsilon\right\}-\frac{\sqrt{\eta}}{4}\delta M_{nc}(\delta)\right)
+O\big(\alpha^2\big)
\end{equation}
and
\begin{equation}\label{eq:proof_feas_param:taylor}
 f_R(\delta,\alpha,\eta,\epsilon)
= 
\alpha\left(\delta\left(\min\left\{\Lambda_m(\delta),\eta\epsilon\right\}-\frac{\sqrt{\eta}}{4}\delta M_{nc}(\delta)\right)
-\sqrt{\eta}\epsilon M_{\lambda}\right)+O\big(\alpha^2\big).
\end{equation}

Now we consider two cases:
\begin{enumerate}[itemindent=13.5pt,leftmargin=0pt]
\item $M_{\lambda}>0$: Let $\delta_0=\bar\delta$ and
$$
\begin{aligned}
\eta_0 =\left(\frac{2\Lambda_m(\delta_0)}{\delta_0 M_{nc}(\delta_0)}\right)^2,\qquad 
\epsilon_0 = \frac{\Lambda_m(\delta_0)}{\eta_0}.
\end{aligned}
$$
By \cref{eq:proof_feas_param:taylor},
\begin{equation}\label{eq:proof_param_feas:case1}
\begin{aligned}
f_R(\delta_0,\alpha,\eta_0,\epsilon_0\big)
=\, &
\alpha\frac{\bar\delta}{2} \left(
\Lambda_m\big(\bar\delta\big)-M_{\lambda} M_{nc}\big(\bar\delta\big)
\right)+O\big(\alpha^2\big).
\end{aligned}
\end{equation}
Togeher with the condition \cref{eq:feas_param_cond}, we can see from \cref{eq:proof_param_feas:case1} there exists a sufficiently small $\alpha_0>0$ such that $f_R(\delta_0,\alpha_0,\eta_0,\epsilon_0)$ is positive, and consequently $S_{\mathrm{fp}}$ is non-empty.

\item $M_{\lambda}=0$: Let $\eta_0>0$ be arbitrary. Consider the function
$$
g(\delta)=\Lambda_m(\delta)-\frac{\sqrt{\eta_0}}{4}\delta M_{nc}(\delta).
$$
by the monotonicity of $\Lambda_m(\delta)$ and $M_{nc}(\delta)$,
$$
\lim_{\delta\rightarrow 0^+} g(\delta)=\lim_{\delta\rightarrow 0^+}\Lambda_m(\delta)\geq\Lambda_m\left(\bar\delta\right)>0.
$$
Therefore there exists some $\delta_0\in\left(0,\bar\delta\right]$ such that $g(\delta_0)>0$.

Now let
$\epsilon_0=\eta_0^{-1}\Lambda_m(\delta_0)$. 
By \cref{eq:proof_feas_param:taylor},
\begin{equation}\label{eq:proof_param_feas:case2}
\begin{aligned}
f_R(\delta_0,\alpha,\eta_0,\epsilon_0)
=\, &
\alpha\delta_0 g(\delta_0) +O\big(\alpha^2\big).
\end{aligned}
\end{equation}
Therefore we can find some $\alpha_0>0$ such that $f_R(\delta_0,\alpha_0,\eta_0,\epsilon_0)$ is positive, and consequently $S_{\mathrm{fp}}$ is non-empty.
\end{enumerate}

Finally, by \cref{lemma:sup_continuous}, it can be seen that $f_R(\delta,\alpha,\eta,\epsilon)$ is a continuous function over $(\delta,\alpha,\eta,\epsilon)\in(0,R)\times\mathbb{R}_{++}^3$. Therefore the set
$$
S_{\mathrm{fp}}
\cap\left((0,R)\times\mathbb{R}_{++}^3\right)=
\{(\delta,\alpha,\eta,\epsilon)\in(0,R)\times\mathbb{R}_{++}^3: f_R(\delta,\alpha,\eta,\epsilon)>0\}
$$
is an open subset of $\mathbb{R}_{++}^4$, and consequently
$$
S_{\mathrm{fp}}
=\bigcup_{R>\bar\delta}
S_{\mathrm{fp}}
\cap\left((0,R)\times\mathbb{R}_{++}^3\right)
$$
is an open subset of $\mathbb{R}_{++}^4$.
\end{proof}

The condition \cref{eq:feas_param_cond} for the existence of feasible parameters can be intuitively interpreted as follows: The problem should be sufficiently convex around the optimal trajectory to overcome the nonlinearity of the nonconvex constraints. It should be emphasized that this is only a sufficient condition.

In the proof of \cref{theorem:feas_param}, we consider the asymptotic behavior of $\rho(\delta,\alpha,\eta,\epsilon)$ as the step size $\alpha$ approaches zero, which greatly helps simplify the analysis. It is well known that when the step size is very small, the classical projected gradient descent can be viewed as good approximation of the continuous-time projected gradient flow \cite{nagurney2012projected} which has simpler analysis but still provides valuable results for understanding the discrete-time counterpart. This observation suggests that by studying the continuous-time limit of \cref{eq:RegPPD}, we may get a better understanding of the discrete-time algorithm.

\section{Continuous-time limit}
\label{sec:ContinuousTimeLimit}

In this section, we study the continuous-time limit of the regularized primal-dual gradient algorithm \cref{eq:RegPPD}.

Let $\hat z_0=(\hat x_0,\hat\lambda_0)$ be the initial primal-dual pair. For each $T\in\mathbb{N}$, let $\hat z_\tau^{(T)}=\big(\hat x_\tau^{(T)},\hat\lambda_\tau^{(T)}\big),\,\tau=0,1,\ldots,T$ be the sequence generated by \cref{eq:RegPPD} with sampling interval $\Delta_T=S/T$ and step size $\alpha=\Delta_T\beta$ for some fixed $\beta>0$. In other words, we let $\hat z^{(T)}_0=\hat z_0$ and $\hat z_\tau^{(T)}=\big(\hat x_\tau^{(T)},\hat\lambda_\tau^{(T)}\big)$ with
\begin{subequations}\label{eq:RegPPD_cont_setting}
\begin{align}
\hat x^{(T)}_\tau &= \mathcal{P}_{\mathcal{X}_\tau}
\left[
\hat x^{(T)}_{\tau-1}
-\Delta_T\beta\left(\nabla c_\tau\big(\hat x^{(T)}_{\tau-1}\big)
+
J_{f_\tau}\big(\hat x^{(T)}_{\tau-1}\big)^T
\lambda^{(T)}_{\tau-1}
\right)
\right], \\
\hat\lambda^{(T)}_\tau &=
\mathcal{P}_{\mathbb{R}^m_+}\left[
\hat\lambda^{(T)}_{\tau-1}
+\Delta_T\eta\beta
\left(f_\tau\big(\hat x^{(T)}_{\tau-1}\big)
-\epsilon\big(\hat\lambda^{(T)}_{\tau-1}-\lambda_{\mathrm{prior}}\big)
\right)\right],
\end{align}
\end{subequations}
for $\tau=1,2,\ldots,T$. Now we define the linear interpolation of $\big(z^{(T)}_\tau\big)_{\tau=1,\ldots,T}$ by
\begin{equation}\label{eq:discrete_interp_path}
\hat z^{(T)}(t)
=\frac{\tau\Delta_T -t}{\Delta_T}\hat z^{(T)}_{\tau-1} + \frac{t-(\tau-1)\Delta_T}{\Delta_T}\hat z^{(T)}_{\tau}
\end{equation}
if $t\in[(\tau-1)\Delta_T,\tau\Delta_T]$ for each $\tau=1,2,\ldots,T$. We are interested in the behavior of $\hat z^{(T)}(t)$ when $T\rightarrow\infty$.

Regarding the time-varying set $\mathcal{X}(t)$, we introduce the following definition on Lipschitz set-value maps. 
\begin{definition}
We say that the set-valued map $\mathcal{X}:[0,S]\rightarrow 2^{\mathbb{R}^n}$ is $\kappa$-Lipschitz if
$$
d_H(\mathcal{X}(t_1),\mathcal{X}(t_2))\leq \kappa|t_1-t_2|\quad\forall t_1,t_2\in[0,S],
$$
where $d_H$ denotes the Hausdorff distance.
\end{definition}

The following additional assumptions are then imposed.
\begin{assumption}
\label{as:set_lips}
\begin{enumerate}[itemindent=18pt,leftmargin=0pt]
\item[(a)] $\mathcal{X}: [0,S] \rightarrow  2^{\mathbb{R}^n}$ is a $\kappa_1$-Lipschitz set-valued map.
\item[(b)] The gradient $\nabla_x c(x,t)$ is continuous over $(x,t)\in {\bigcup_{t\in[0,S]}\mathcal{X}(t)\times[0,S]}$, and there exists $\kappa_2>0$ such that
\begin{equation}
\label{eq:cont_time_limit:nabla_c_linear_growth}
\|\nabla_x c(x,t)\|\leq \kappa_2(1+\|x\|),\qquad 
\forall (x,t)\in \bigcup_{t\in[0,S]}\mathcal{X}(t)\times[0,S].
\end{equation}
\item[(c)] $f(x,t)$ is continuous over $(x,t)\in\bigcup_{t\in[0,S]}\mathcal{X}(t)\times[0,S]$, and $J_{f,x}(x,t)$ is bounded and continuous over $(x,t)\in\bigcup_{t\in[0,S]}\mathcal{X}(t)\times[0,S]$.
\end{enumerate}
\end{assumption}

The next theorem formulates the continuous-time limit of the discrete-time algorithm \cref{eq:RegPPD}.
\begin{theorem}\label{theorem:cont_time_limit}
Let $T\rightarrow\infty$ while keeping $S$ constant. Then the sequence of trajectories $\big(\hat z^{(T)}\big)_{T\in\mathbb{N}}$ defined by \cref{eq:discrete_interp_path} has a convergent subsequence, and any convergent subsequence converges uniformly to some Lipschitz continuous $\hat z=\big(\hat x,\hat\lambda\big)$ that satisfies
\begin{subequations}\label{eq:cont_time_limit}
\begin{align}
-\frac{d}{dt}\hat x(t)-\beta\left(\nabla_x c(\hat x(t),t)+
J_{f,x}(\hat x(t),t)^T
\hat\lambda(t)\right)
&\in N_{\mathcal{X}(t)}(\hat x(t)),
\label{eq:cont_time_limit:primal}\\
-\frac{d}{dt}\hat\lambda(t)+
\eta\beta
\left(f(\hat x(t),t)-\epsilon\big(\hat\lambda(t)-\lambda_{\mathrm{prior}}\big)\right)
&\in N_{\mathbb{R}^m_+}\big(\hat\lambda(t)\big)
\label{eq:cont_time_limit:dual1}
\end{align}
\end{subequations}
for almost all $t\in[0,S]$.
\end{theorem}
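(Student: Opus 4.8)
The plan follows the standard template for catching algorithms of perturbed sweeping processes: derive uniform Lipschitz bounds on the interpolants \cref{eq:discrete_interp_path}, extract a uniformly convergent subsequence by Arzela--Ascoli, and pass to the limit in a reformulation of \cref{eq:RegPPD_cont_setting} as a difference inclusion to obtain \cref{eq:cont_time_limit}.

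\emph{A priori bounds.} First I would show $\sup_{T,\tau}\|\hat z^{(T)}_\tau\|<\infty$. For the dual update, since $\lambda_{\mathrm{prior}}$ is fixed by $\mathcal P_{\mathbb R^m_+}$ and $\eta\beta\epsilon\Delta_T\le1$ for $T$ large, $\|\hat\lambda^{(T)}_\tau-\lambda_{\mathrm{prior}}\|\le(1-\eta\beta\epsilon\Delta_T)\|\hat\lambda^{(T)}_{\tau-1}-\lambda_{\mathrm{prior}}\|+\eta\beta\Delta_T\|f_\tau(\hat x^{(T)}_{\tau-1})\|$; for the primal, nonexpansiveness of $\mathcal P_{\mathcal X_\tau}$ together with $\hat x^{(T)}_{\tau-1}\in\mathcal X_{\tau-1}$ give $\|\hat x^{(T)}_\tau\|\le\|\hat x^{(T)}_{\tau-1}\|+\Delta_T\beta\|\nabla c_\tau(\hat x^{(T)}_{\tau-1})+J_{f_\tau}(\hat x^{(T)}_{\tau-1})^T\hat\lambda^{(T)}_{\tau-1}\|+\kappa_1\Delta_T$, using $d(\hat x^{(T)}_{\tau-1},\mathcal X_\tau)\le\kappa_1\Delta_T$ from \cref{as:set_lips}(a). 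With the linear-growth bound \cref{eq:cont_time_limit:nabla_c_linear_growth} on $\nabla_x c$ and the resulting linear growth in $x$ of $f$ from the bounded Jacobian in \cref{as:set_lips}(c) (exploiting convexity of $\mathcal X_\tau$ and uniform boundedness of $\mathcal P_{\mathcal X_\tau}(0)$), these become recursions of the form $R_\tau\le(1+c\Delta_T)R_{\tau-1}+c\Delta_T$, and a discrete Gronwall inequality with $\tau\Delta_T\le S$ closes the bound. Feeding it back, $\|\hat z^{(T)}_\tau-\hat z^{(T)}_{\tau-1}\|\le C\Delta_T$ with $C$ independent of $T$, so the interpolants are uniformly bounded and uniformly $C$-Lipschitz; Arzela--Ascoli yields a uniformly convergent subsequence with $C$-Lipschitz limit $\hat z=(\hat x,\hat\lambda)$, and since $d(\hat x^{(T)}(t),\mathcal X(t))\le(C+\kappa_1)\Delta_T\to0$ with $\mathcal X(t)$ closed, $\hat x(t)\in\mathcal X(t)$ and $\hat\lambda(t)\in\mathbb R^m_+$ for all $t$.

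\emph{Difference-inclusion form and the drift.} The variational characterization of the projection rewrites \cref{eq:RegPPD_cont_setting}, for $t\in((\tau-1)\Delta_T,\tau\Delta_T)$, as
$$
-\tfrac{d}{dt}\hat x^{(T)}(t)-\beta g^{(T)}(t)\in N_{\mathcal X_\tau}\bigl(\hat x^{(T)}_\tau\bigr),\qquad
-\tfrac{d}{dt}\hat\lambda^{(T)}(t)+\eta\beta h^{(T)}(t)\in N_{\mathbb R^m_+}\bigl(\hat\lambda^{(T)}_\tau\bigr),
$$
where $g^{(T)},h^{(T)}$ are piecewise constant, equal on that interval to the gradient term $\nabla c_\tau(\hat x^{(T)}_{\tau-1})+J_{f_\tau}(\hat x^{(T)}_{\tau-1})^T\hat\lambda^{(T)}_{\tau-1}$ and to $f_\tau(\hat x^{(T)}_{\tau-1})-\epsilon(\hat\lambda^{(T)}_{\tau-1}-\lambda_{\mathrm{prior}})$, and $\tfrac{d}{dt}\hat z^{(T)}(t)=(\hat z^{(T)}_\tau-\hat z^{(T)}_{\tau-1})/\Delta_T$ there. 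Uniform convergence $\hat z^{(T)}\to\hat z$ plus the $O(\Delta_T)$ gaps between $(\hat x^{(T)}_{\tau-1},\tau\Delta_T)$ and $(\hat x^{(T)}(t),t)$, together with the pointwise continuity of $\nabla_x c$, $f$, $J_{f,x}$ from \cref{as:set_lips}, give $g^{(T)}(t)\to\nabla_x c(\hat x(t),t)+J_{f,x}(\hat x(t),t)^T\hat\lambda(t)$ and $h^{(T)}(t)\to f(\hat x(t),t)-\epsilon(\hat\lambda(t)-\lambda_{\mathrm{prior}})$ a.e.; uniform boundedness and dominated convergence upgrade this to strong convergence in $L^2([0,S])$. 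Since $\tfrac{d}{dt}\hat z^{(T)}$ is bounded in $L^\infty$, a further subsequence converges weakly in $L^2$, necessarily to $\tfrac{d}{dt}\hat z$ by the uniform convergence of $\hat z^{(T)}$.

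\emph{Limit passage (the main obstacle).} What remains is to show that the weak $L^2$-limit $u:=-\tfrac{d}{dt}\hat x-\beta g$ satisfies $u(t)\in N_{\mathcal X(t)}(\hat x(t))$ a.e. -- a closed-graph statement for the time-dependent normal cone under weak--strong convergence, which is the heart of the argument. I would handle it through the variational inequality rather than maximal-monotonicity theory: from $u^{(T)}(t)\in N_{\mathcal X_\tau}(\hat x^{(T)}_\tau)$ one has $\langle u^{(T)}(t),y(t)-\hat x^{(T)}_\tau\rangle\le0$ for any measurable selection $y(t)\in\mathcal X(t)$ (bounded, by the Lipschitz property of $\mathcal X$); replacing $\hat x^{(T)}_\tau$ with $\hat x^{(T)}(t)$ and accounting for $d(y(t),\mathcal X_\tau)\le\kappa_1\Delta_T$ at a cost $O(\Delta_T)$ (since $\sup_t\|u^{(T)}(t)\|<\infty$), integrating against an arbitrary nonnegative $\phi\in L^\infty([0,S])$, and combining $u^{(T)}\rightharpoonup u$ weakly with $\hat x^{(T)}\to\hat x$ strongly in $L^2$ yields $\int_0^S\phi(t)\langle u(t),y(t)-\hat x(t)\rangle\,dt\le0$, hence $\langle u(t),y(t)-\hat x(t)\rangle\le0$ a.e. Applying this over a Castaing representation of $\mathcal X$ (a countable family of measurable selections dense in $\mathcal X(t)$ a.e.) and invoking $\hat x(t)\in\mathcal X(t)$ gives $u(t)\in N_{\mathcal X(t)}(\hat x(t))$ a.e., i.e. \cref{eq:cont_time_limit:primal}; \cref{eq:cont_time_limit:dual1} is obtained identically with the fixed set $\mathbb R^m_+$, where the selection step is trivial. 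The subtle points are absorbing the $O(\Delta_T)$ discrepancies (iterate versus interpolant, $\mathcal X_\tau$ versus $\mathcal X(t)$) uniformly in $T$, which is precisely where the $\kappa_1$-Lipschitz regularity in \cref{as:set_lips}(a) is needed, and the measurable-selection density argument; alternatively, this step can be quoted from the theory of catching algorithms for perturbed sweeping processes \cite{adly2014convex,castaing1993evolution}.
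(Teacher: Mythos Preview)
Your proposal is correct and follows the same high-level template as the paper: uniform a priori bounds via discrete Gronwall (using the $\kappa_1$-Lipschitz drift of $\mathcal X(\cdot)$ and the linear growth of the vector field), equicontinuity and compactness of the interpolants, and a limit passage into the normal-cone inclusion. The a priori bound step is essentially identical to the paper's; the paper just combines primal and dual into a single map $\Phi(z,t)$ on $\mathcal C(t)=\mathcal X(t)\times\mathbb R^m_+$ and verifies $\|\Phi(z,t)\|\le\kappa_5(1+\|z\|)$ once, rather than treating the two blocks separately.

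Where you and the paper diverge is in the mechanics of the limit passage. The paper first proves a closedness lemma for the graph of $(z,t)\mapsto N_{\mathcal C(t)}(z)$ under the Hausdorff-Lipschitz hypothesis, intersects with a fixed ball to obtain an upper semicontinuous compact-valued map $F$, extracts weak$^*$-convergent derivatives in $L^\infty$ via Banach--Alaoglu, and then invokes a ready-made convergence theorem for differential inclusions (Theorem~7.2.2 in Aubin--Frankowska) to conclude. Your route is more hands-on: Arzel\`a--Ascoli for the trajectories, weak $L^2$ convergence for the derivatives, strong $L^2$ convergence for the drift, and then the normal-cone membership is recovered by testing the variational inequality against a Castaing representation of $\mathcal X(\cdot)$ and passing to the limit with a weak--strong pairing. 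Both are standard in the sweeping-process literature the paper cites; your argument is more self-contained (no external black-box theorem, no separate graph-closedness lemma) at the price of the measurable-selection density step, while the paper's is shorter once the cited machinery is granted.
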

The proof is provided in \Cref{sec:proof_cont_time_limit}.

\Cref{theorem:cont_time_limit} indicates that a continuous-time counterpart of the discrete-time algorithm \cref{eq:RegPPD} is given by the differential inclusions \cref{eq:cont_time_limit}, which can also be abbreviated as
\begin{equation}\label{eq:pert_sweep}
-\frac{d}{dt}\hat z(t)
+\Phi(\hat z(t),t)\in N_{\mathcal{C}(t)}(\hat z(t)),
\end{equation}
where $\mathcal{C}(t)=\mathcal{X}(t)\times\mathbb{R}^m_+$ and $\Phi$ is a map from $\mathbb{R}^n\times\mathbb{R}^{m}$ to $\mathbb{R}^n\times\mathbb{R}^{m}$. This form of differential inclusions has been studied under the name \emph{perturbed sweeping processes} in the literature \cite{castaing1993evolution,adly2014convex}, and the discrete-time algorithm has been called the \emph{catching algorithm} of the perturbed sweeping processes.

It should be noted that, when the convex set $\mathcal{C}(t)$ is time-varying, the perturbed sweeping process \cref{eq:pert_sweep} in general cannot be equivalently written in the form
\begin{subequations}
\begin{equation}\label{eq:proj_dynamic_form_1}
\frac{d}{dt}\hat z(t)
=\mathcal{P}_{T_{\mathcal{C}(t)}(\hat z(t))}\left[\Phi(\hat z(t),t)\right]
\end{equation}
where $T_{\mathcal{C}(t)}$ denotes the tangent cone of $\mathcal{C}(t)$, or
\begin{equation}\label{eq:proj_dynamic_form_2}
\frac{d}{dt}\hat z(t)
=\lim_{s\rightarrow 0^+}\frac{\mathcal{P}_{\mathcal{C}(t)}\left[\hat z(t)+s\Phi(\hat z(t),t)\right]-\hat z(t)}{s}
\end{equation}
\end{subequations}
as an ordinal projected dynamical system, as there may not exist solutions on $[0,S]$ that satisfy these equations almost everywhere. A simple example is given by $\mathcal{C}(t)=\{(x_1,x_2)\in\mathbb{R}^2:x_1\geq t\}$ with $\Phi(z,t)=0$. It can be seen that under the initial condition $(x_1(0),x_2(0))=(0,0)$, \cref{eq:pert_sweep} admits the solution
$x_1(t)=t,x_2(t)=0$, but \cref{eq:proj_dynamic_form_1} and \cref{eq:proj_dynamic_form_2} do not have solutions. In \cite{hauswirth2018time}, the authors introduced a formulation similar to \cref{eq:proj_dynamic_form_1} based on the notion of temporal tangent cones, which is a generalization of tangent cones in time-varying situations.

Next we study the tracking performance of the system of differential inclusions \cref{eq:cont_time_limit}, and present the following theorem which is the continous-time counterpart of \cref{theorem:tracking_performance}.
\begin{theorem}\label{theorem:tracking_cont_time}
Suppose there exists $\delta>0$, $\beta>0$, $\eta>0$ and $\epsilon>0$ such that
\begin{equation}\label{eq:cont_time_tracking_cond}
\beta^{-1}\sigma_{\eta}<\delta\, \gamma(\delta,\eta,\epsilon)-\sqrt{\eta}\epsilon{M}_{\lambda},
\end{equation}
where
\begin{equation}
\gamma(\delta,\eta,\epsilon) := \min\left\{{\Lambda}_m(\delta),\eta\epsilon\right\}
-\frac{\sqrt{\eta}}{4}\delta{M}_{nc}(\delta).
\end{equation}
Let $\hat z(t)$ be a Lipschitz continuous solution to \cref{eq:cont_time_limit} with $\|\hat z(0)-z^\ast(0)\|_{\eta^{-1}}<\delta$. Then for all $t\in[0,S]$,
\begin{equation}\label{eq:cont_time_bound}
\begin{aligned}
\|\hat z(t)-z^\ast(t)\|_{\eta}
<\ 
&\frac{\beta^{-1}{\sigma}_\eta+\sqrt{\eta}\epsilon{M}_{\lambda}}{\gamma(\delta,\eta,\epsilon)} \\
&+e^{-\beta\gamma(\delta,\eta,\epsilon)\,t}
\left(
\left\|\hat z(0)-z^\ast(0)\right\|_{\eta}
-\frac{\beta^{-1}{\sigma}_\eta+\sqrt{\eta}\epsilon{M}_{\lambda}}{\gamma(\delta,\eta,\epsilon)}\right).
\end{aligned}
\end{equation}
\end{theorem}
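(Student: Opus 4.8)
The plan is to mirror the discrete-time argument of Theorem~\ref{theorem:tracking_performance}, but in continuous time. First I would establish a differential (Gr\"onwall-type) analogue of the approximate-contraction Lemma~\ref{lemma:approx_contraction}: define $\phi(t):=\|\hat z(t)-z^\ast(t)\|_{\eta}$ and show that, as long as $\phi(t)\le\delta$, the upper right Dini derivative satisfies
\begin{equation}\label{eq:plan_dini}
D^+\phi(t)\le -\beta\,\gamma(\delta,\eta,\epsilon)\,\phi(t)+\sigma_\eta+\beta\sqrt{\eta}\,\epsilon M_\lambda
\end{equation}
for almost all $t$. To get this, at a point of differentiability of both $\hat z$ and $z^\ast$ I would write $\tfrac{d}{dt}\tfrac12\|\hat z(t)-z^\ast(t)\|_\eta^2=\langle \dot{\hat z}(t)-\dot z^\ast(t),\,\hat z(t)-z^\ast(t)\rangle_\eta$, substitute the differential-inclusion dynamics~\eqref{eq:cont_time_limit} for $\dot{\hat z}(t)$, and use the normal-cone (variational) inequality: since $-\dot{\hat x}-\beta(\nabla_x c+J_{f,x}^T\hat\lambda)\in N_{\mathcal X(t)}(\hat x(t))$ and $x^\ast(t)\in\mathcal X(t)$, the inner product of this vector with $x^\ast(t)-\hat x(t)$ is $\le 0$; analogously for the dual block using $N_{\mathbb R^m_+}(\hat\lambda(t))$ and $\lambda^\ast(t)\in\mathbb R^m_+$. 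The same normal-cone inequalities hold for $z^\ast(t)$ against $\hat z(t)$ from the KKT characterization~\eqref{eq:KKT_form1}. Combining the four inequalities produces, after the dust settles, a term $-\beta\langle \Phi(\hat z,t)-\Phi(z^\ast,t),\,\hat z-z^\ast\rangle_\eta$ plus a cross term $\langle \dot z^\ast(t),\,\hat z(t)-z^\ast(t)\rangle_\eta$ bounded by $\sigma_\eta\phi(t)$ in absolute value, plus the regularization defect coming from $-\epsilon\lambda^\ast$ not being the exact stationarity residual.

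The heart of the estimate is the monotonicity-type bound on $\langle \Phi(\hat z,t)-\Phi(z^\ast,t),\,\hat z-z^\ast\rangle_\eta$. Here I would exploit the $\eta$-weighted inner product so that the cross terms $\lambda^T f$ in the Lagrangian cancel in the natural way, reducing the question to a lower bound on a Hessian-like quantity evaluated along the segment between $x^\ast(t)$ and $\hat x(t)$: using the mean-value representations~\eqref{eq:def_HL}--\eqref{eq:def_Hfc} of the remainder, the relevant quadratic form is controlled below by $\Lambda_m(\delta)$ (from $\overline H_{\mathcal L^{nc}}+\tfrac12\sum_i\lambda^\ast_i\overline H_{f^c_i}$), while the dual regularization contributes $\eta\epsilon$, and the nonconvex-constraint curvature subtracts at most $\tfrac{\sqrt\eta}{4}\delta M_{nc}(\delta)$ via Lemma~\ref{lemma:curve_lemma} and the bound $\|\hat x-x^\ast\|\le\phi(t)\le\delta$. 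This is exactly the quantity $\gamma(\delta,\eta,\epsilon)$. The residual from replacing the true multiplier residual with $-\epsilon(\hat\lambda-\lambda_{\mathrm{prior}})$ yields the $\beta\sqrt\eta\,\epsilon M_\lambda$ term after bounding $\|\lambda^\ast(t)-\lambda_{\mathrm{prior}}\|\le M_\lambda$. I expect this monotonicity lower bound to be the main technical obstacle: one has to track carefully how the weighting $\eta^{-1}$ on the dual block interacts with the factor $\eta\beta$ in the dual dynamics and the $\sqrt\eta$ appearing in $L_f$-type cross terms, and to argue that the curvature along the whole segment --- not just at the endpoint --- is captured by the integral averages, which is why $\Lambda_m(\delta)$ is defined as an infimum over $\|u\|\le\delta$.

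Once~\eqref{eq:plan_dini} is in hand, I would first show the trajectory never leaves the ball of radius $\delta$: condition~\eqref{eq:cont_time_tracking_cond} says $\sigma_\eta+\beta\sqrt\eta\,\epsilon M_\lambda<\beta\,\delta\,\gamma(\delta,\eta,\epsilon)$, i.e. the right-hand side of~\eqref{eq:plan_dini} is strictly negative at $\phi=\delta$; since $\phi(0)<\delta$, a standard continuity/maximal-interval argument (if $\phi$ first touched $\delta$ at time $t_0$, then $D^+\phi(t_0)<0$, a contradiction) keeps $\phi(t)<\delta$ for all $t\in[0,S]$, so the bound~\eqref{eq:plan_dini} is valid on the whole interval. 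Then I would apply the Gr\"onwall comparison lemma to the linear ODE $\dot y=-\beta\gamma y+(\sigma_\eta+\beta\sqrt\eta\epsilon M_\lambda)$ with $y(0)=\phi(0)$, whose explicit solution is precisely the right-hand side of~\eqref{eq:cont_time_bound} (noting $\beta^{-1}\sigma_\eta+\sqrt\eta\epsilon M_\lambda$ over $\gamma$ is the steady state, after dividing through by $\beta$), giving $\phi(t)<y(t)$ and hence~\eqref{eq:cont_time_bound}. The strict inequality throughout is inherited from the strict inequality in~\eqref{eq:cont_time_tracking_cond} together with $\phi(0)<\delta$. A minor point to handle is absolute continuity of $t\mapsto\tfrac12\phi(t)^2$: since $\hat z$ is Lipschitz (Theorem~\ref{theorem:cont_time_limit}) and $z^\ast$ is Lipschitz (Assumption~\ref{as:set_and_function}(c)), the composition is locally Lipschitz, hence differentiable a.e., which legitimizes the Dini-derivative computation.
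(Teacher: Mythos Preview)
Your proposal is correct and follows essentially the same route as the paper: differentiate $\tfrac12\|\hat z-z^\ast\|_\eta^2$, use the four normal-cone inequalities from~\eqref{eq:cont_time_limit} and~\eqref{eq:KKT_form1}, split the Lagrangian into the $\mathcal L^{nc}$ piece and the convex-constraint piece, and bound the resulting quadratic form by $\gamma(\delta,\eta,\epsilon)$ plus the $\sqrt\eta\epsilon M_\lambda$ and $\sigma_\eta$ residuals, then close with a Gr\"onwall comparison. The only stylistic difference is in the invariance step: the paper partitions $[0,S]$ into subintervals of a fixed length $\tilde\Delta$ and inducts on them to keep $\phi(t)<\delta$, whereas you invoke a first-hitting-time contradiction; both arguments are standard and yield the same conclusion.
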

\begin{proof}
The following Gronwall-type lemma will be employed, whose proof is presented in \Cref{sec:proof_gronwall_type}.
\begin{lemma}\label{lemma:gronwall_type}
Let $v(t)$ be a nonnegative absolutely continuous function that satisfies
$$
\frac{1}{2}\frac{d}{dt}\left(v^2(t)\right)\leq a v(t)-b v^2(t)
$$
for almost all $t\in[0,S]$, where $a$ and $b$ are nonnegative constants. Then
$$
v(t)\leq e^{-b t}v(0)+\frac{a}{b}(1-e^{-b t}).
$$
\end{lemma}

First of all, we notice that \cref{eq:cont_time_limit:primal} implies
$$
\begin{aligned}
0\geq\ &
(x^\ast(t)-\hat x(t))^T\bigg(-\frac{d}{dt}{\hat x}(t)-\beta\nabla_x \mathcal{L}^{nc}(\hat x(t),\lambda^\ast(t),t) \\
&-\beta J_{f^c,x}(\hat x(t),t)^T\hat\lambda(t)
+\beta
J_{f^{nc},x}(\hat x(t),t)^T
\big(\lambda^\ast(t)-\hat\lambda(t)\big)\bigg),
\end{aligned}
$$
and by \cref{eq:KKT_form1:primal},
$$
\begin{aligned}
0\geq\ &
(\hat x(t)-x^\ast(t))^T
\left(
-\beta\nabla_x \mathcal{L}^{nc}(x^\ast(t),\lambda^\ast(t),t)
-\beta J_{f^c,x}(x^\ast(t),t)^T\lambda^\ast(t)
\right).
\end{aligned}
$$
Thus it can be shown that
$$
\begin{aligned}
& (\hat x(t)-x^\ast(t))^T\frac{d}{dt}\hat x(t) \\
\leq\, &
-\beta(\hat x(t)-x^\ast(t))^T
\Big(
\nabla_x \mathcal{L}^{nc}(\hat x(t),\lambda^\ast(t),t)
-\nabla_x \mathcal{L}^{nc}(x^\ast(t),\lambda^\ast(t),t) \\
&+J_{f^c,x}(\hat x(t),t)^T\hat\lambda(t)-J_{f^c,x}(x^\ast(t),t)^T\lambda^\ast(t)
+
J_{f^{nc},x}(\hat x(t),t)^T
\big(\hat\lambda(t)-\lambda^\ast(t)\big)
\Big) \\
=\ &
-\beta(\hat x(t)-x^\ast(t))^T B_{\mathcal{L}^{nc}}(t)\,(\hat x(t)-x^\ast(t))
\\
&-\beta(\hat x(t)-x^\ast(t))^T\Big(J_{f^c,x}(\hat x(t),t)^T\hat\lambda(t)-J_{f^c,x}(x^\ast(t),t)^T\lambda^\ast(t) \\
&\qquad\qquad +
J_{f^{nc},x}(\hat x(t),t)^T
\big(\hat\lambda(t)-\lambda^\ast(t)\big)
\Big),
\end{aligned}
$$
where $B_{\mathcal{L}^{nc}}(t):=\overline{H}_{\mathcal{L}^{nc}}(\hat x(t)-x^\ast(t),t)$. Next, by \cref{eq:cont_time_limit:dual1},
$$
\begin{aligned}
0\geq\ & (\lambda^\ast(t)-\hat\lambda(t))^T
\left(
-\frac{d}{dt}\hat\lambda(t)
+\eta\beta\left( f(\hat x(t),t)
-\epsilon(\hat\lambda(t)-\lambda_{\mathrm{prior}})\right)
\right),
\end{aligned}
$$
and by \cref{eq:KKT_form1:dual1}, $0\geq
\eta\beta\big(\hat\lambda(t)-\lambda^\ast(t)\big)^T
f(x^\ast(t),t)$. We then get
$$
\begin{aligned}
\big(\hat\lambda(t)-\lambda^\ast(t)\big)^T\frac{d}{dt}\hat\lambda(t)
\leq\ &
\eta\beta\big(\hat\lambda(t)-\lambda^\ast(t))^T
\big(f(\hat x(t),t)-f(x^\ast(t),t)\big) \\
&-\eta\beta\epsilon\big\|\hat\lambda(t)-\lambda^\ast(t)\big\|^2
-\eta\beta\epsilon\big(\hat\lambda(t)-\lambda^\ast(t)\big)^T(\lambda^\ast(t)-\lambda_{\mathrm{prior}}).
\end{aligned}
$$
Now it can be seen that
$$
\begin{aligned}
&\frac{1}{2}\frac{d}{dt}\|\hat z(t)-z^\ast(t)\|_{\eta}^2 \\
=\ &
(\hat x(t)-x^\ast(t))^T\frac{d}{dt}\hat x(t)
+\eta^{-1}
\big(\hat \lambda(t)-\lambda^\ast(t)\big)^T\frac{d}{dt}\hat\lambda(t)
\\
& -(\hat z(t)-z^\ast(t))^T\begin{bmatrix}
I_n & \\
 & \eta^{-1} I_{m}
\end{bmatrix}\frac{d}{dt}z^\ast(t) \\
\leq\ &
-\beta(\hat x(t)-x^\ast(t))^T B_{\mathcal{L}^{nc}}(t)(\hat x(t)-x^\ast(t)) -\beta\epsilon\left(\|\hat\lambda(t)-\lambda^\ast(t)\|^2\right)\\
&
-\beta(\hat x(t)-x^\ast(t))^T\left(J_{f^c,x}(\hat x(t),t)^T\hat\lambda(t)-J_{f^c,x}(x^\ast(t),t)^T\lambda^\ast(t)\right) \\
&
-\beta(\hat x(t)-x^\ast(t))^T
J_{f^{nc},x}(\hat x(t),t)^T
\big(\hat\lambda(t)-\lambda^\ast(t)\big) \\
&
+\beta
\left(f(\hat x(t),t)-f(x^\ast(t),t)\right)^T
\big(\hat\lambda(t)-\lambda^\ast(t)\big) \\
&-\beta\epsilon
\big(\lambda^\ast(t)-\lambda_{\mathrm{prior}}\big)^T
\big(\hat\lambda(t)-\lambda^\ast(t)\big)
+ {\sigma}_{\eta}\|\hat z(t)-z^\ast(t)\|_{\eta},
\end{aligned}
$$
in which
$$
\begin{aligned}
& -(\hat x(t)-x^\ast(t))^T\left(J_{f^c,x}(\hat x(t),t)^T\hat\lambda(t)-J_{f^c,x}(x^\ast(t),t)^T\lambda^\ast(t)\right) \\
& -(\hat x(t)-x^\ast(t))^T
J_{f^{nc},x}(\hat x(t),t)^T
\big(\hat\lambda(t)-\lambda^\ast(t)\big) \\
&+\left(f(\hat x(t),t)-f(x^\ast(t),t) \right)^T
\big(\hat\lambda(t)-\lambda^\ast(t)\big) \\
=\ &
-\hat\lambda(t)^T
\left(
f^c(x^\ast(t),t)-f(\hat x(t),t)
-J_{f^c,x}(\hat x(t),t)(x^\ast(t)-\hat x(t))
\right) \\
&
-\lambda^\ast(t)^T
\left(
f^c(\hat x(t),t)-f(x^\ast(t),t)
-J_{f^c,x}(x^\ast(t),t)(\hat x(t)-x^\ast(t))
\right) \\
&+\big(\hat\lambda(t)-\lambda^\ast(t)\big)^T\left(f^{nc}(\hat x(t),t)+ J_{f^{nc},x}(\hat x(t),t)(x^\ast(t)-\hat x(t))-f^{nc}(x^\ast(t),t)
\right).
\end{aligned}
$$
Since $f^c(x^\ast(t),t)-f(\hat x(t),t)
-J_{f^c,x}(\hat x(t),t)(x^\ast(t)-\hat x(t))\geq 0$ by the convexity of $f^c$, and
$$
\begin{aligned}
&\lambda^\ast(t)^T\left(f^c(\hat x(t),t)-f(x^\ast(t),t)
-J_{f^c,x}(x^\ast(t),t)(\hat x(t)-x^\ast(t))\right) \\
=\ &
(\hat x(t)-x^\ast(t))^T\left(\frac{1}{2}\sum_{i=1}^m \lambda^\ast_i(t) B_{f^c_i}(t)\right)(\hat x(t)-x^\ast(t)),
\end{aligned}
$$
where we denote $B_{f^c_i}(t):=\overline{H}_{f^c_i}(\hat x(t)-x^\ast(t),t)$, we can see that
\begin{equation}\label{eq:proof_cont_time_bound:temp}
\begin{aligned}
&\frac{1}{2}\frac{d}{dt}\|\hat z(t)-z^\ast(t)\|_{\eta}^2 \\
\leq\ &
-\beta(\hat x(t)-x^\ast(t))^T
\left(B_{\mathcal{L}^{nc}}(t)+\frac{1}{2}\sum_{i=1}^m\lambda^\ast_i(t)B_{f^c_i}(t)\right)
(\hat x(t)-x^\ast(t)) \\
&-\beta\epsilon
\big\|\hat\lambda(t)-\lambda^\ast(t)\big\|^2
-\beta\epsilon
\big(\lambda^\ast(t)-\lambda_{\mathrm{prior}}\big)^T
\big(\hat\lambda(t)-\lambda^\ast(t)\big)
\\
&
-\beta\big(
\hat\lambda(t)-\lambda^\ast(t)\big)^T
\big(f^{nc}(x^\ast(t),t) \\
&\qquad\qquad
-f^{nc}(\hat x(t),t)- J_{f^{nc},x}(\hat x(t),t)(x^\ast(t)-\hat x(t))\big)
+ {\sigma}_{\eta}\|\hat z(t)-z^\ast(t)\|_{\eta}
.
\end{aligned}
\end{equation}

Let $\kappa_1$ be the Lipschitz constant of $\hat z(t)$ with respect to the norm $\|\cdot\|_{\eta}$. Define
$$
\tilde\Delta := \frac{1}{2(\kappa_1+\sigma)}\left(\delta-\max\left\{\left\|\hat z(0)-z^\ast(0)\right\|_{\eta}\,
,
\frac{\beta^{-1}{\sigma}_{\eta}+\sqrt{\eta}\epsilon{M}_{\lambda}}{\gamma(\delta,\eta,\epsilon)}\right\}\right)
$$
We prove by induction that
\begin{equation}\label{eq:proof_cont_time_bound:hyp}
\|\hat z(t)-z^\ast(t)\|_{\eta}
\leq
\max\left\{\|\hat z(0)-z^\ast(0)\|_{\eta}\,
,
\frac{\beta^{-1}{\sigma}_\eta+\sqrt{\eta}\epsilon {M}_{\lambda}}{\gamma(\delta,\eta,\epsilon)}\right\}.
\end{equation}
for $t\in\big[(k-1)\tilde\Delta,k\tilde\Delta\big]\cap[0,S]$ for each $k=1,\ldots,\big\lceil S/\tilde\Delta \big\rceil$. Obviously \cref{eq:proof_cont_time_bound:hyp} holds for $t=0$. Now assume that \cref{eq:proof_cont_time_bound:hyp} holds for $t=k\tilde \Delta$. Then we have
$$
\begin{aligned}
\|\hat z(t)-z^\ast(t)\|_{\eta}
&\leq
\big\|\hat z(t)-\hat z\big(k\tilde\Delta\big)\big\|_{\eta}
+\big\|\hat z\big(k\tilde\Delta\big)-z^\ast\big(k\tilde\Delta\big)\big\|_{\eta}
+\big\|z^\ast\big(k\tilde\Delta\big)-z^\ast(t)\big\|_{\eta} \\
&\leq (\kappa_1+\sigma_\eta)\tilde\Delta
+\big\|\hat z(k\tilde\Delta)-z^\ast(k\tilde\Delta)\big\|_{\eta}
< \delta
\end{aligned}
$$
for any $t\in\big[k\tilde\Delta,(k+1)\tilde\Delta\big]\cap[0,S]$. Therefore by the definition of $M_{nc}(\delta)$ and \cref{eq:curve_bound_f},
\begin{equation}\label{eq:proof_cont_time:curve_bound}
\begin{aligned}
&\left\|
f^{nc}(\hat x(t),t)+J_{f^{nc},x}(\hat x(t),t)(x^\ast(t)-\hat x(t))
-f^{nc}(x^\ast(t),t)\right\| \\
\leq\ & \frac{{M}_{nc}(\delta)}{2}
\left\|\hat x(t)-x^\ast(t)\right\|^2
\end{aligned}
\end{equation}
for $t\in\big[k\tilde\Delta,(k+1)\tilde\Delta\big]\cap[0,S]$. Moreover, by Young's inequality,
\begin{equation}\label{eq:proof_cont_time:young}
\begin{aligned}
& \left\|\hat x(t)-x^\ast(t)\right\|
\big\|
\hat\lambda(t)-\lambda^\ast(t)
\big\| \\
\leq\ &
\frac{1}{2}
\left(\sqrt{\eta}\left\|\hat x(t)-x^\ast(t)\right\|^2
+\frac{1}{\sqrt{\eta}}
\big\|
\hat\lambda(t)-\lambda^\ast(t)\big\|^2\right)
=
\frac{\sqrt{\eta}}{2}
\left\|\hat z(t)-z^\ast(t)\right\|_{\eta}^2.
\end{aligned}
\end{equation}
Combining \cref{eq:proof_cont_time:curve_bound} and \cref{eq:proof_cont_time:young} with \cref{eq:proof_cont_time_bound:temp}, we get, for ${t\in\big[k\tilde\Delta,(k+1)\tilde\Delta\big]\cap[0,S]}$,
\begin{equation}\label{eq:proof_cont_time_bound:gronwall_type}
\begin{aligned}
&\frac{1}{2}\frac{d}{dt}\|\hat z(t)-z^\ast(t)\|_{\eta}^2 
 \\
\leq\ &
-\beta(\hat x(t)-x^\ast(t))^T
\left(B_{\mathcal{L}^{nc}}(t)+\frac{1}{2}\sum_{i=1}^m\lambda^\ast_i(t)B_{f^c_i}(t)\right)
(\hat x(t)-x^\ast(t)) 
 \\
&-\beta\epsilon
\big\|\hat\lambda(t)-\lambda^\ast(t)\big\|^2
-\beta\epsilon
\big(\lambda^\ast(t)-\lambda_{\mathrm{prior}}\big)^T
\big(\hat\lambda(t)-\lambda^\ast(t)\big)
 \\
&
+\frac{\beta{M}_{nc}(\delta)}{2}\|\hat x(t)-x^\ast(t)\|^2
\big\|
\hat\lambda(t)-\lambda^\ast(t)
\big\| 
+ {\sigma}_\eta\|\hat z(t)-z^\ast(t)\|_{\eta}
 \\
\leq\ &
-\beta\left(\min\left\{{\Lambda}_m(\delta),\eta\epsilon\right\}
-\frac{\sqrt{\eta}}{4}\delta {M}_{nc}(\delta)\right)\|\hat z(t)-z^\ast(t)\|^2_{\eta} 
 \\
&+\beta\left(\beta^{-1}{\sigma}_\eta+\sqrt{\eta}\epsilon{M}_{\lambda}\right)\|\hat z(t)-z^\ast(t)\|_{\eta}.
\end{aligned}
\end{equation}
Then by the condition \cref{eq:cont_time_tracking_cond}, \cref{lemma:gronwall_type} implies
\begin{equation}\label{eq:proof_cont_time_bound:temp2}
\begin{aligned}
&\|\hat z(t)-z^\ast(t)\|_{\eta} \\
\leq\ & e^{-\beta\gamma(\delta,\eta,\epsilon) (t-k\tilde\Delta)}
\left(\big\|\hat z\big(k\tilde\Delta)-z^\ast\big(k\tilde\Delta\big)\big\|_{\eta}
-\frac{\beta^{-1}{\sigma}_\eta+\sqrt{\eta}\epsilon {M}_{\lambda}}
{\gamma(\delta,\eta,\epsilon)}\right) \\
& + \frac{\beta^{-1}{\sigma}_\eta+\sqrt{\eta}\epsilon {M}_{\lambda}}{\gamma(\delta,\eta,\epsilon)}
\end{aligned}
\end{equation}
for $t\in\big[k\tilde\Delta,(k+1)\tilde\Delta\big]\cap[0,S]$. Now, if $\big\|\hat z\big(k\tilde\Delta\big)-z^\ast\big(k\tilde\Delta\big)\big\|_{\eta}$ is less than or equal to $\big(\beta^{-1}{\sigma}_\eta+\sqrt{\eta}\epsilon{M}_{\lambda}\big)/\gamma(\delta,\eta,\epsilon)$, then \cref{eq:proof_cont_time_bound:temp2} shows that
$$
\|\hat z(t)-z^\ast(t)\|_{\eta}
\leq\frac{\beta^{-1}{\sigma}_\eta+\sqrt{\eta}\epsilon{M}_{\lambda}}{\gamma(\delta,\zeta,\nu,\epsilon)},\qquad t\in\big[k\tilde\Delta,(k+1)\tilde\Delta\big]\cap[0,S],
$$
while if $\big\|\hat z\big(k\tilde\Delta\big)-z^\ast\big(k\tilde\Delta\big)\big\|_{\eta}$ is greater than $\big(\beta^{-1}{\sigma}_\eta+\sqrt{\eta}\epsilon{M}_{\lambda}\big)/\gamma(\delta,\eta,\epsilon)$, then \cref{eq:proof_cont_time_bound:temp2} with $t=k\tilde\Delta$ and \cref{eq:proof_cont_time_bound:hyp} imply
$$
\|\hat z(t)-z^\ast(t)\|_{\eta}
\leq\big\|\hat z\big(k\tilde\Delta\big)-z^\ast\big(k\tilde\Delta\big)\big\|_{\eta}
\leq \|\hat z(0)-z^\ast(0)\|_{\eta}
$$
for $t\in\big[k\tilde\Delta,(k+1)\tilde\Delta\big]\cap[0,S]$, and we can see that \cref{eq:proof_cont_time_bound:hyp} holds for $t\in\big[k\tilde\Delta,(k+1)\tilde\Delta\big]\cap[0,S]$. By induction \cref{eq:proof_cont_time_bound:hyp} holds for all $t\in[0,S]$, and particularly we get
$$
\|\hat z(t)-z^\ast(t)\|_{\eta}<\delta
$$
for all $t\in[0,S]$. This suggests that \cref{eq:proof_cont_time_bound:gronwall_type} holds for all $t\in[0,S]$, and finally by \cref{lemma:gronwall_type}, we get the desired bound on $\|\hat z(t)-z^\ast(t)\|_{\eta}$.
\end{proof}

It is interesting to make a comparison between \cref{theorem:tracking_performance} and \cref{theorem:tracking_cont_time}. The Taylor expansion \cref{eq:proof_feas_param:taylor_rho} shows that
$$
\rho(\delta,\alpha,\eta,\epsilon)
=1-\alpha\gamma(\delta,\eta,\epsilon)+
O\big(\alpha^2\big).
$$
Therefore, if we set $\alpha=\Delta_T\beta$ in the condition \cref{eq:main_thm_cond1} and let $\Delta_T\rightarrow 0^+$, we will recover the condition \cref{eq:cont_time_tracking_cond} except the strictness of the inequality. Furthermore, for any $t\in[0,S]$, we have
$$
\begin{aligned}
\rho^{\lfloor t/\Delta_T\rfloor}(\delta,\Delta_T\beta,\eta,\epsilon)
=\ &\left(1-\Delta_T\beta\gamma(\delta,\eta,\epsilon)+
O\big(\Delta_T^2\big)\right)^{\lfloor t/\Delta_T\rfloor} \\
=\ &
e^{-\beta\gamma(\delta,\eta,\epsilon)t}+O(\Delta_T),
\end{aligned}
$$
from which we can also recover \cref{eq:cont_time_bound}. These observations partially justify that \cref{eq:cont_time_limit} indeed gives the correct continuous-time limit of the discrete-time algorithm \cref{eq:RegPPD}.

Notice that the continuous-time tracking error bound \cref{eq:cont_time_bound} shares a similar form with the discrete-time tracking error bound \cref{eq:main_tracking_error}: a constant term
\begin{equation}\label{eq:cont_time_bound2}
\frac{
\beta^{-1}\sigma_{\eta}
+\sqrt{\eta}\epsilon M_{\lambda}
}{\gamma(\delta,\eta,\epsilon)},
\end{equation}
which we still call the \emph{eventual tracking error bound} in the continuous-time limit, plus something that decays exponentially with $\tau$. The eventual tracking error bound can also be split into two parts, the first part $\beta^{-1}\sigma_{\eta}/\gamma(\delta,\eta,\epsilon)$ being proportional to $\sigma_\eta$, and the second part $\sqrt{\eta}\epsilon M_{\lambda}/\gamma(\delta,\eta,\epsilon)$ representing the discrepancy introduced by regularization.

\subsection{Feasible parameters}

As can be seen, the quantity $\gamma(\delta,\zeta,\epsilon)$ and the bound \cref{eq:cont_time_bound2} are much easier to analyze than $\rho(\delta,\alpha,\zeta,\epsilon)$ and \cref{eq:main_tracking_error2}. This enables us not only to discuss the existence of feasible parameters exist but also the structure of the optimal parameters.

\begin{theorem}\label{theorem:opt_param_cont_time}
Let
$$
\begin{aligned}
\mathscr{A}_{\mathrm{fp}}(\delta,\beta)
&:=\left\{(\eta,\epsilon)\in\mathbb{R}_{++}^2:
\beta^{-1}\sigma_{\eta}<\delta\, \gamma(\delta,\eta,\epsilon)-\sqrt{\eta}\epsilon{M}_{\lambda}
\right\}.
\end{aligned}
$$
\begin{enumerate}[itemindent=13.5pt,leftmargin=0pt]
\item Suppose $\Lambda_m\big(\bar\delta\big)>M_{\lambda} M_{nc}\big(\bar\delta\big)$ for some $\bar\delta>0$. Then the set
$$
\mathscr{S}_{\mathrm{fp}}
:=
\left\{(\delta,\beta,\eta,\epsilon)\in\mathbb{R}_{++}^4: (\eta,\epsilon)\in\mathscr{A}_{\mathrm{fp}}(\delta,\beta)\right\}
$$
is a nonempty open subset of $\mathbb{R}_{++}^4$.

\item Let $\beta>0$ and $\delta>0$ be fixed such that $\mathscr{A}_{\mathrm{fp}}(\delta,\beta)$ is nonempty, and suppose $M_{\lambda} M_{nc}(\delta)>0$. Then
the minimizer of \cref{eq:cont_time_bound2} over $(\eta,\epsilon)\in \mathscr{A}_{\mathrm{fp}}(\delta,\beta)$ exists and is unique, and is equal to $(\Lambda_m(\delta)/\epsilon^\ast,\epsilon^\ast)$ where $\epsilon^\ast$ is the unique minimizer of the unimodal function
$$
b_{\delta,\beta}(\epsilon)
:=\frac{\beta^{-1}\sigma_{\epsilon^{-1}\Lambda_m(\delta)}+\sqrt{\epsilon\Lambda_m(\delta)}M_{\lambda}}{\Lambda_m(\delta)-\delta M_{nc}(\delta)\sqrt{\epsilon^{-1}\Lambda_m(\delta)}/4},
\qquad
\epsilon>\frac{1}{\Lambda_m(\delta)}\left(\frac{\delta M_{nc}(\delta)}{4\Lambda_m(\delta)}\right)^2.
$$
\end{enumerate}
\end{theorem}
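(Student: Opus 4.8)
The plan is to treat the two parts separately, reusing almost verbatim the machinery already built for \cref{theorem:feas_param}. For part~(1), I would observe that the Taylor-expansion argument in the proof of \cref{theorem:feas_param} already isolates the leading-order coefficient: setting $\alpha=\Delta_T\beta$ and letting $\Delta_T\to 0^+$ in \cref{eq:proof_feas_param:taylor} shows that the discrete feasibility quantity $f_R(\delta,\Delta_T\beta,\eta,\epsilon)$ has the same sign, for small $\Delta_T$, as $\beta^{-1}\bigl(\delta\gamma(\delta,\eta,\epsilon)-\sqrt{\eta}\epsilon M_\lambda\bigr)$, which is exactly the defining quantity of $\mathscr{A}_{\mathrm{fp}}(\delta,\beta)$. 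Hence the nonemptiness proof reduces to exhibiting one feasible tuple: I would reuse the explicit choices from the two cases ($M_\lambda>0$ and $M_\lambda=0$) in the proof of \cref{theorem:feas_param}, namely $\delta_0=\bar\delta$, $\eta_0=(2\Lambda_m(\delta_0)/(\delta_0 M_{nc}(\delta_0)))^2$, $\epsilon_0=\Lambda_m(\delta_0)/\eta_0$, which makes $\delta_0\gamma(\delta_0,\eta_0,\epsilon_0)-\sqrt{\eta_0}\epsilon_0 M_\lambda=\tfrac{\bar\delta}{2}(\Lambda_m(\bar\delta)-M_\lambda M_{nc}(\bar\delta))>0$ by hypothesis \cref{eq:feas_param_cond}, for any sufficiently small $\beta$. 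Openness then follows because $\gamma(\delta,\eta,\epsilon)$ is continuous in $(\delta,\eta,\epsilon)$ — this is where I invoke \cref{lemma:sup_continuous} to get continuity of $\Lambda_m(\delta)$ and $M_{nc}(\delta)$ — so the set where a continuous function is strictly positive is open.

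For part~(2), I would first reduce the two-variable minimization to one variable. Fix $\delta,\beta$. Since $\gamma(\delta,\eta,\epsilon)=\min\{\Lambda_m(\delta),\eta\epsilon\}-\tfrac{\sqrt{\eta}}{4}\delta M_{nc}(\delta)$, increasing $\eta\epsilon$ beyond $\Lambda_m(\delta)$ leaves the $\min$ unchanged but strictly increases the subtracted term $\tfrac{\sqrt\eta}{4}\delta M_{nc}(\delta)$ (holding $\epsilon$ fixed, say), hence strictly worsens $\gamma$, strictly enlarges the denominator deficit and also increases the $\sqrt\eta\epsilon M_\lambda$ numerator term; symmetrically, $\eta\epsilon<\Lambda_m(\delta)$ is dominated. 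So on the optimal set one must have $\eta\epsilon=\Lambda_m(\delta)$, i.e.\ $\eta=\Lambda_m(\delta)/\epsilon$. Substituting this into \cref{eq:cont_time_bound2} collapses it to $b_{\delta,\beta}(\epsilon)$ as written, with the domain constraint $\epsilon>\tfrac{1}{\Lambda_m(\delta)}(\delta M_{nc}(\delta)/(4\Lambda_m(\delta)))^2$ coming precisely from requiring the denominator $\Lambda_m(\delta)-\delta M_{nc}(\delta)\sqrt{\epsilon^{-1}\Lambda_m(\delta)}/4$ to be positive.

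It then remains to show $b_{\delta,\beta}$ is unimodal on this interval with a unique minimizer. I would argue via the substitution $s=\epsilon^{-1/2}$, which turns the denominator into an affine decreasing function of $s$ and the numerator into $\beta^{-1}\sigma_{s^2\Lambda_m(\delta)}+\sqrt{\Lambda_m(\delta)}M_\lambda/s$; since $\sigma_\eta$ is itself a "max over $t_1,t_2$ of $\|z^\ast(t_2)-z^\ast(t_1)\|_\eta/|t_2-t_1|$" and $\|z\|_\eta^2=\|x\|^2+\eta^{-1}\|\lambda\|^2$, one checks $\sigma_\eta^2$ is a nonincreasing convex function of $\eta^{-1}$, so $\sigma_{s^2\Lambda_m(\delta)}$ is a nonincreasing function of $s$ that is convex as a function of $s^{-2}$ — enough to make the numerator a sum of a nonincreasing convex-ish term and a convex decreasing term. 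The cleanest route is: as $\epsilon$ decreases to the left endpoint the denominator $\to 0^+$ while the numerator stays bounded below by $\sqrt{\Lambda_m(\delta)}M_\lambda/\sqrt{\epsilon}>0$ (using $M_\lambda M_{nc}(\delta)>0$, so $M_\lambda>0$), hence $b_{\delta,\beta}\to+\infty$; as $\epsilon\to+\infty$ the numerator term $\sqrt{\epsilon\Lambda_m(\delta)}M_\lambda\to+\infty$ while the denominator $\to\Lambda_m(\delta)$, hence $b_{\delta,\beta}\to+\infty$; and a direct computation of $b_{\delta,\beta}'$ shows it vanishes at exactly one point (the derivative, after clearing the positive denominator squared, is a strictly monotone function of $\epsilon$ on the interval). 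The main obstacle I anticipate is precisely this last unimodality/uniqueness claim: controlling the sign of $b_{\delta,\beta}'$ requires knowing the convexity-type behavior of $\sigma_\eta$ in $\eta$, which the paper only posits as $\sigma_\eta>0$; I would handle it by deriving from the definition of $\sigma_\eta$ that $\eta\mapsto\sigma_\eta^2$ is concave and nondecreasing (equivalently $\sigma_{\epsilon^{-1}\Lambda_m(\delta)}$ as a function of $\epsilon$ is nonincreasing with a convexity bound), and then showing the quotient of the resulting numerator by the affine denominator has a single stationary point by a standard "ratio of convex over affine" argument.
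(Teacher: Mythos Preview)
Your Part~1 sketch is essentially the paper's argument, modulo two slips: the feasibility condition is $\beta^{-1}\sigma_\eta<\text{(positive)}$, so you need $\beta$ sufficiently \emph{large}, not small; and for openness you must also establish continuity of $\eta\mapsto\sigma_\eta$, not just of $\gamma$. The paper handles the latter by noting that $x\mapsto\sigma_{x^{-2}}$ is a pointwise supremum of convex functions of $x$, hence convex, hence continuous on $\mathbb{R}_{++}$.

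Part~2 has two real gaps. First, your reduction to the curve $\eta\epsilon=\Lambda_m(\delta)$ fixes $\epsilon$ and varies $\eta$, claiming monotonicity of the bound on each side. But the numerator of \cref{eq:cont_time_bound2} contains $\beta^{-1}\sigma_\eta$, and $\sigma_\eta$ is \emph{decreasing} in $\eta$; so when you push $\eta$ upward the $\sqrt{\eta}\epsilon M_\lambda$ term grows while $\beta^{-1}\sigma_\eta$ shrinks, and the quotient need not be monotone (take $x^\ast$ constant, $\lambda^\ast$ varying, $M_{nc}$ small to see this). The paper avoids this by fixing $\eta$ and varying $\epsilon$: then $\sigma_\eta$ is frozen, and a direct derivative shows $g_0(\eta,\cdot)$ is strictly decreasing on $(0,\Lambda_m(\delta)/\eta)$ and strictly increasing beyond, which forces $\eta^\ast\epsilon^\ast=\Lambda_m(\delta)$ at any minimizer. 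Second, you never establish that a minimizer exists on the \emph{open} set $\mathscr{A}_{\mathrm{fp}}(\delta,\beta)$; your endpoint blow-up argument lives on the one-dimensional reduced curve and presupposes both existence and the reduction. The paper instead proves that the closure $g_1^{-1}[\mathbb{R}_+]$ is compact in $\mathbb{R}_{++}^2$ (bounded via a case analysis as $\eta+\epsilon\to\infty$; closed because the boundary cannot meet the coordinate axes), so $g_0$ attains its minimum there, and then observes $g_0\equiv\delta$ on the boundary $\{g_1=0\}$ while $g_0<\delta$ on $\{g_1>0\}$, pushing the minimizer into the interior $\mathscr{A}_{\mathrm{fp}}(\delta,\beta)$. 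Only after these two steps does the unimodality of $b_{\delta,\beta}$ (argued, as you anticipate, through convexity properties of $\sigma$) yield uniqueness.
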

The proof is postponed to \Cref{sec:proof_opt_param_cont_time}.

The first part of \cref{theorem:opt_param_cont_time} is the continuous-time counterpart of \cref{theorem:feas_param}. Then in Part 2, we proved that when $\beta$ and $\delta$ are fixed, there exists a unique optimal $(\eta^\ast,\epsilon^\ast)$ that minimizes the eventual tracking error bound \cref{eq:cont_time_bound2}. We have also shown that $\eta^\ast$ is equal to $\Lambda_m(\delta)/\epsilon^\ast$, and that $\epsilon^\ast$ is the unique minimizer of the unimodal function $b_{\delta,\beta}(\epsilon)$. The latter result is a quantitatively characterization of the trade-off in choosing the regularization parameter $\epsilon$: more regularization makes the Lagrangian better conditioned in the dual variables, but introduces additional errors as a side effect.

We fix $\beta$ and $\delta$ in Part 2 of \cref{theorem:opt_param_cont_time}, as a larger $\beta$ or a smaller $\delta$ will always lead to a smaller bound, while in applications $\beta$ usually cannot be arbitrarily chosen because of practical limitations (e.g. computation or communication delays), and an excessively small $\delta$ can also result in violating the condition \cref{eq:cont_time_tracking_cond}.

It should be noted that the optimal $(\eta^\ast,\epsilon^\ast)$ that minimizes the bound \cref{eq:cont_time_bound2} may not be the optimal parameters that minimizes the tracking error itself; in fact, \cref{eq:cont_time_bound2} is only an upper bound (which might be loose in certain situations). However, the analysis presented here will still be of value and can serve as a guide for choosing the parameters in practice.

\begin{remark}
In the second part of \cref{theorem:opt_param_cont_time}, we only consider the case where neither $M_{\lambda}$ and $M_{nc}(\delta)$ is zero. If one of $M_{\lambda}$ and $M_{nc}(\delta)$ is zero, the optimal $(\eta^\ast,\epsilon^\ast)$ may not satisfy the structure stated in \cref{theorem:opt_param_cont_time}, but the analysis is similar and no harder which we omit here.
\end{remark}

\subsection{Isolation of the KKT trajectory}
In \Cref{sec:ProblemFormulation}, we remarked that there could be multiple trajectories of KKT points, and $z^\ast(t)=(x^\ast(t),\lambda^\ast(t))$ is only one of these trajectories that is chosen arbitrarily. Then we analyzed the tracking performance of the algorithm \cref{eq:RegPPD} and its continuous-time limit \cref{eq:cont_time_limit}, and showed that under the conditions \cref{eq:main_thm_cond} or \cref{eq:cont_time_tracking_cond} a bounded tracking error can be achieved. On the other hand, if the KKT trajectory $z^\ast(t)$ bifurcates into two or more branches at some time $\tilde t\in[0,S]$ and these branches become far away as time proceeds, then we have no way to identify from \cref{theorem:tracking_performance} or \cref{theorem:tracking_cont_time} which trajectory the algorithm will track. It is also possible that two KKT trajectories come very close to each other at some time and we cannot distinguish by theory which trajectory the algorithm will track afterwards. Fortunately, as the following theorems show, such possibilities will not occur in some sense under certain conditions.

\begin{subtheorem}{theorem}
\begin{theorem}\label{theorem:isolate_KKTa}
Suppose for some $\delta>0$ and $\eta>0$,
\begin{equation}\label{eq:cond_isolate_KKT}
{\Lambda}_m(\delta)
-\frac{\sqrt{\eta}}{2}\delta {M}_{nc}(\delta)>0.
\end{equation}
Then there is no KKT point in the set
$$
\left\{z=(x,\lambda):0<\|z-z^\ast(t)\|_{\eta}\leq\delta,\,x\neq x^\ast(t)\right\}
$$
for each $t\in[0,S]$.

In particular, \cref{eq:cond_isolate_KKT} holds if the condition \cref{eq:cont_time_tracking_cond} holds for some $\delta\leq 2\eta^{-1/2}M_{\lambda}$.
\end{theorem}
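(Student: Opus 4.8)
The plan is to argue by contradiction, mimicking the variational‑inequality computation behind \cref{theorem:tracking_cont_time}. Fix $t\in[0,S]$ and suppose $z=(x,\lambda)$ is a KKT point of \cref{eq:main_problem_cont} (i.e.\ it satisfies \cref{eq:KKT_form1}) with $x\neq x^\ast(t)$ and $0<\|z-z^\ast(t)\|_\eta\le\delta$. Write $x^\ast=x^\ast(t)$, $\lambda^\ast=\lambda^\ast(t)$, $\Delta x:=x-x^\ast\neq 0$, $\Delta\lambda:=\lambda-\lambda^\ast$; from the definition of $\|\cdot\|_\eta$ we have $\|\Delta x\|\le\|z-z^\ast\|_\eta\le\delta$ and $\|\Delta\lambda\|\le\sqrt{\eta}\,\|z-z^\ast\|_\eta\le\sqrt{\eta}\,\delta$. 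The goal is to convert the two copies of the KKT system — one at $z^\ast$, available through \cref{as:set_and_function}(c), and one at $z$ — into the single quadratic inequality
\[
\Delta x^T\Big(\overline H_{\mathcal{L}^{nc}}(\Delta x,t)+\tfrac12\sum_{i=1}^m\lambda^\ast_i(t)\,\overline H_{f^c_i}(\Delta x,t)\Big)\Delta x
\ \le\ \tfrac12 M_{nc}(\delta)\,\|\Delta\lambda\|\,\|\Delta x\|^2 ,
\]
after which one bounds the left side below by $\Lambda_m(\delta)\|\Delta x\|^2$ via \cref{eq:def_Lambda_m} (legitimate since $\|\Delta x\|\le\delta$), divides by $\|\Delta x\|^2>0$, and uses $\|\Delta\lambda\|\le\sqrt{\eta}\,\delta$ to get $\Lambda_m(\delta)\le\frac{\sqrt{\eta}}{2}\delta M_{nc}(\delta)$, contradicting \cref{eq:cond_isolate_KKT}.

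To produce the displayed inequality, first evaluate the normal‑cone inclusion \cref{eq:KKT_form1:primal} at $z$ against the feasible point $x^\ast\in\mathcal{X}(t)$ and at $z^\ast$ against $x\in\mathcal{X}(t)$, and add; monotonicity of the normal cone yields
\[
\Delta x^T\big(\nabla_x c(x,t)-\nabla_x c(x^\ast,t)+J_{f,x}(x,t)^T\lambda-J_{f,x}(x^\ast,t)^T\lambda^\ast\big)\ \le\ 0 .
\]
Similarly, evaluating \cref{eq:KKT_form1:dual1} at $z^\ast$ against $\lambda\in\mathbb{R}^m_+$ and at $z$ against $\lambda^\ast\in\mathbb{R}^m_+$ and adding gives $\Delta\lambda^T\big(f(x,t)-f(x^\ast,t)\big)\ge 0$. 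Subtracting the latter inequality from the former, and substituting $f=f^c+f^{nc}$ together with $\nabla_x c(\cdot,t)+J_{f^{nc},x}(\cdot,t)^T\lambda=\nabla_x\mathcal{L}^{nc}(\cdot,\lambda,t)$, splits the combined expression into: (i) an $\mathcal{L}^{nc}$ part that, by the fundamental theorem of calculus and \cref{eq:def_HL}, equals $\Delta x^T\overline H_{\mathcal{L}^{nc}}(\Delta x,t)\Delta x$ plus a cross term $\Delta\lambda^T J_{f^{nc},x}(x,t)\Delta x$; (ii) an $f^c$ part $\Delta x^T\big(J_{f^c,x}(x,t)^T\lambda-J_{f^c,x}(x^\ast,t)^T\lambda^\ast\big)$; and (iii) the dual terms $-\Delta\lambda^T\big(f^c(x,t)-f^c(x^\ast,t)\big)-\Delta\lambda^T\big(f^{nc}(x,t)-f^{nc}(x^\ast,t)\big)$.

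The crux is the bookkeeping that simplifies (i)--(iii). The $f^{nc}$ cross term and $-\Delta\lambda^T(f^{nc}(x,t)-f^{nc}(x^\ast,t))$ combine into $-\Delta\lambda^T\big(f^{nc}(x,t)-f^{nc}(x^\ast,t)-J_{f^{nc},x}(x,t)(x-x^\ast)\big)$, whose modulus is at most $\tfrac12 M_{nc}(\delta)\|\Delta\lambda\|\|\Delta x\|^2$ by \cref{eq:curve_bound_f} of \cref{lemma:curve_lemma} applied with base point $x$ and increment $x^\ast-x$ (the segment joining $x$ to $x^\ast$ lies in the ball of radius $\|\Delta x\|\le\delta$ about $x^\ast$, so the supremum of $\|D^2_{xx}f^{nc}_t\|$ along it is $\le M_{nc}(\delta)$ by \cref{eq:def_Mnc}). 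Regrouping the $f^c$ contributions in (ii)--(iii) by $\lambda$ and $\lambda^\ast$ gives $\lambda^T\big({-}(f^c(x,t)-f^c(x^\ast,t)-J_{f^c,x}(x,t)(x-x^\ast))\big)+\lambda^{\ast T}\big(f^c(x,t)-f^c(x^\ast,t)-J_{f^c,x}(x^\ast,t)(x-x^\ast)\big)$; the first summand is $\ge 0$ because each $f^c_i$ is convex and $\lambda\ge 0$, while the second equals $\tfrac12\Delta x^T\big(\sum_{i=1}^m\lambda^\ast_i(t)\,\overline H_{f^c_i}(\Delta x,t)\big)\Delta x$ by the integral form of the Taylor remainder and \cref{eq:def_Hfc}. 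Discarding the nonnegative summand and collecting all pieces yields the displayed quadratic inequality, and hence the contradiction described above.

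Finally, the last sentence of the theorem is a short computation: if \cref{eq:cont_time_tracking_cond} holds for some $\beta,\epsilon>0$ and some $\delta\le 2\eta^{-1/2}M_\lambda$, then $\delta\,\gamma(\delta,\eta,\epsilon)>\sqrt{\eta}\,\epsilon M_\lambda\ge\tfrac12\eta\epsilon\delta$ — using $\sigma_\eta>0$ for the first inequality and $\sqrt{\eta}\,\delta\le 2M_\lambda$ for the second — so $\gamma(\delta,\eta,\epsilon)>\tfrac12\eta\epsilon$; writing $\gamma(\delta,\eta,\epsilon)=\min\{\Lambda_m(\delta),\eta\epsilon\}-\tfrac{\sqrt{\eta}}{4}\delta M_{nc}(\delta)$ and considering separately the cases $\eta\epsilon\le\Lambda_m(\delta)$ and $\eta\epsilon>\Lambda_m(\delta)$, each case gives $\Lambda_m(\delta)>\tfrac{\sqrt{\eta}}{2}\delta M_{nc}(\delta)$, which is \cref{eq:cond_isolate_KKT}. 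I expect the only real obstacle to be getting the algebraic decomposition in the third paragraph exactly right — cleanly isolating the convex part of the constraints and matching the two quadratic remainders to $\overline H_{\mathcal{L}^{nc}}$ and $\overline H_{f^c_i}$ — since nothing here is conceptually subtle.
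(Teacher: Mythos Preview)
Your proposal is correct and follows essentially the same route as the paper's proof: both combine the primal normal-cone inequalities at $z$ and $z^\ast(t)$ with the dual inequality $\Delta\lambda^T(f(x,t)-f(x^\ast,t))\ge 0$, split the result into the $\mathcal{L}^{nc}$ piece (yielding $\overline H_{\mathcal{L}^{nc}}$), the convex $f^c$ piece (one summand dropped by convexity and $\lambda\ge 0$, the other giving $\tfrac12\sum_i\lambda^\ast_i\overline H_{f^c_i}$ via the integral Taylor remainder), and the $f^{nc}$ remainder bounded by \cref{lemma:curve_lemma}, then invoke $\Lambda_m(\delta)$ and $\|\Delta\lambda\|\le\sqrt{\eta}\,\delta$ to reach a contradiction. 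For the last sentence the paper avoids the case split by using $\min\{\Lambda_m(\delta),\eta\epsilon\}\le\tfrac12(\Lambda_m(\delta)+\eta\epsilon)$ to write $\Lambda_m(\delta)\ge 2\min\{\Lambda_m(\delta),\eta\epsilon\}-\eta\epsilon$ directly, but your two-case argument is equivalent and equally short.
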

\begin{theorem}\label{theorem:isolate_KKTb}
Let $z_{(i)}^\ast(t)=\big(x_{(i)}^\ast(t),\lambda_{(i)}^\ast(t)\big)$, $i=1,2$ be two Lipschitz continuous trajectories of KKT points of \cref{eq:main_problem} over $t\in[0,S]$, and for each $i=1,2$, define $\Lambda^{(i)}_m(\delta)$, $M^{(i)}_{nc}(\delta)$ for the associated trajectories by \cref{eq:def_Lambda_m} and \cref{eq:def_Mnc}. Suppose there exist $\delta_{(1)}>0$, $\delta_{(2)}>0$ and $\eta>0$ such that
\begin{equation}\label{eq:cond_isolate_KKT_2}
\Lambda^{(i)}_m\big(\delta_{(i)}\big)
-\frac{\sqrt{\eta}}{2}\delta_{(i)}M^{(i)}_{nc}\big(\delta_{(i)}\big)>0
\end{equation}
for $i=1,2$, and for some $t\in[0,S]$, $x_{(1)}^\ast(t)\neq x_{(2)}^\ast(t)$. Then
$$
\big\|z^\ast_{(1)}(t)-z^\ast_{(2)}(t)\big\|_{\eta}>\delta_{(1)}+\delta_{(2)}.
$$
\end{theorem}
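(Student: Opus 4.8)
The plan is a proof by contradiction that mirrors, and refines, the argument behind \cref{theorem:isolate_KKTa}. Fix the time $t$ in the statement, abbreviate $z_i=(x_i,\lambda_i):=z^\ast_{(i)}(t)$, $\Delta x:=x_1-x_2$ (so $\Delta x\neq 0$ by hypothesis), $\Delta\lambda:=\lambda_1-\lambda_2$, and suppose for contradiction that $\|z_1-z_2\|_\eta\le\delta_{(1)}+\delta_{(2)}$; note $\|\Delta x\|\le\|z_1-z_2\|_\eta$ and $\eta^{-1/2}\|\Delta\lambda\|\le\|z_1-z_2\|_\eta$. As a preliminary reduction, \cref{eq:cond_isolate_KKT_2} is precisely hypothesis \cref{eq:cond_isolate_KKT} of \cref{theorem:isolate_KKTa} for trajectory $i$ at radius $\delta_{(i)}$; hence if $\|z_1-z_2\|_\eta\le\delta_{(i)}$ for some $i$, then $z_{3-i}$ is a KKT point with $x$-component $\neq x_i$ lying in the punctured ball that \cref{theorem:isolate_KKTa} declares empty, a contradiction. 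We may therefore assume $\max\{\delta_{(1)},\delta_{(2)}\}<\|z_1-z_2\|_\eta\le\delta_{(1)}+\delta_{(2)}$.

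Next I would pair the KKT systems \cref{eq:KKT_form1} written for $z_1$ and for $z_2$ with the same data at time $t$: pairing the primal inclusions against $\Delta x$ via monotonicity of $N_{\mathcal{X}(t)}$, pairing the dual inclusions against $\Delta\lambda$ via monotonicity of $N_{\mathbb{R}^m_+}$, and adding, exactly as in the energy estimate behind \cref{theorem:tracking_cont_time}, one gets
\[
0\ \ge\ \Delta x^{\mathsf T}\big[\nabla_x\mathcal{L}(x_1,\lambda_1,t)-\nabla_x\mathcal{L}(x_2,\lambda_2,t)\big]-\big(f(x_1,t)-f(x_2,t)\big)^{\mathsf T}\Delta\lambda ,
\]
with $\mathcal{L}$ the (unregularized) Lagrangian. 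I would then choose a pivot $\bar x=x_2+s\Delta x$, $s\in(0,1)$, on the segment $[x_2,x_1]$ that simultaneously meets the geometric constraints $(1-s)\|\Delta x\|\le\delta_{(1)}$, $s\|\Delta x\|\le\delta_{(2)}$ and the dual-splitting constraints $(1-s)\|\Delta\lambda\|\le\sqrt\eta\,\delta_{(1)}$, $s\|\Delta\lambda\|\le\sqrt\eta\,\delta_{(2)}$; such an $s$ exists because each of these four one-sided bounds has the form $s\le c_j$ or $s\ge 1-c_j'$ and, using $\|\Delta x\|\le\delta_{(1)}+\delta_{(2)}$ and $\|\Delta\lambda\|\le\sqrt\eta(\delta_{(1)}+\delta_{(2)})$, the admissible interval is nonempty (this is the step that forces the \emph{sum} $\delta_{(1)}+\delta_{(2)}$, and where the preliminary reduction is used). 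Inserting $\pm\nabla_x\mathcal{L}(\bar x,\lambda_1,t)$ and $\pm\nabla_x\mathcal{L}(\bar x,\lambda_2,t)$ splits the bracketed term into a near-$x_1$ piece that a mean-value computation (with the multiplier fixed at $\lambda_1$) rewrites as $(1-s)\,\Delta x^{\mathsf T}\big(\overline H^{(1)}_{\mathcal{L}^{nc}}(\tilde u,t)+\sum_i(\lambda_1)_i\int_0^1\nabla^2_{xx}f^c_i(x_1+\theta\tilde u,t)\,d\theta\big)\Delta x$, $\tilde u=\bar x-x_1$; since $\int_0^1\nabla^2_{xx}f^c_i-\tfrac12\overline H^{(1)}_{f^c_i}=\int_0^1\theta\,\nabla^2_{xx}f^c_i\succeq 0$ (convexity of $f^c_i$ and the weight in \cref{eq:def_Hfc}) and $\|\tilde u\|\le\delta_{(1)}$, this is $\ge(1-s)\Lambda^{(1)}_m(\delta_{(1)})\|\Delta x\|^2$; a symmetric near-$x_2$ piece is $\ge s\,\Lambda^{(2)}_m(\delta_{(2)})\|\Delta x\|^2$; and the remaining cross term $\Delta x^{\mathsf T}J_{f,x}(\bar x,t)^{\mathsf T}\Delta\lambda$, together with the dual term, is split into $f^c$- and $f^{nc}$-parts over the two half-segments.

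For the $f^{nc}$ part, \cref{eq:curve_bound_f} applied on $[\bar x,x_1]$ and on $[x_2,\bar x]$ bounds the cross term in absolute value by $\tfrac12\big[M^{(1)}_{nc}(\delta_{(1)})(1-s)^2+M^{(2)}_{nc}(\delta_{(2)})s^2\big]\|\Delta x\|^2\|\Delta\lambda\|$, which by the dual-splitting constraints is $\le(1-s)\tfrac{\sqrt\eta}{2}\delta_{(1)}M^{(1)}_{nc}(\delta_{(1)})\|\Delta x\|^2+s\,\tfrac{\sqrt\eta}{2}\delta_{(2)}M^{(2)}_{nc}(\delta_{(2)})\|\Delta x\|^2$; for the $f^c$ part one uses convexity of the $f^c_i$ (tangent-line under/over-estimation at the reference points $x_1,x_2$) so that its contribution is nonnegative and no $M_c$-type quantity enters, exactly as in \cref{theorem:isolate_KKTa}. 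Collecting everything yields
\[
0\ \ge\ (1-s)\big(\Lambda^{(1)}_m(\delta_{(1)})-\tfrac{\sqrt\eta}{2}\delta_{(1)}M^{(1)}_{nc}(\delta_{(1)})\big)\|\Delta x\|^2+s\big(\Lambda^{(2)}_m(\delta_{(2)})-\tfrac{\sqrt\eta}{2}\delta_{(2)}M^{(2)}_{nc}(\delta_{(2)})\big)\|\Delta x\|^2 ,
\]
whose right-hand side is strictly positive by \cref{eq:cond_isolate_KKT_2} (both coefficients positive, $s,1-s>0$, $\Delta x\neq 0$) — a contradiction. Hence $\|z^\ast_{(1)}(t)-z^\ast_{(2)}(t)\|_\eta>\delta_{(1)}+\delta_{(2)}$. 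The hard part, I expect, is organizing the pivot decomposition so that the $f^c$-cross term keeps the convexity cancellations (i.e. placing the $f^c$-Jacobians at $x_1,x_2$ and routing only the $\mathcal{L}^{nc}$ part through $\bar x$, so that on each half-segment the $\int\theta\nabla^2 f^c\succeq0$ identity still produces exactly the $\tfrac12\overline H_{f^c}$ that $\Lambda_m$ wants) while still obtaining the two $\Lambda_m-\tfrac{\sqrt\eta}{2}\delta M_{nc}$ terms with the correct weights $1-s$ and $s$; the choice of admissible pivot and the dual-splitting estimates are then routine bookkeeping.
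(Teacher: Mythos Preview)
Your proposal is correct and follows essentially the same contradiction-plus-pivot strategy as the paper, but the paper's organization is cleaner on the point you yourself flag as ``the hard part.'' The paper chooses the pivot explicitly as the full primal--dual convex combination $\tilde z=(\tilde x,\tilde\lambda)$ with weights $\tilde\theta_i=\delta_{(i)}/(\delta_{(1)}+\delta_{(2)})$ (so your search over $s$ and the preliminary reduction via \cref{theorem:isolate_KKTa} are unnecessary: this fixed $s$ always satisfies all four of your constraints). Crucially, by pivoting in $\lambda$ as well, the paper's $f^c$ leftover term takes the form $\tilde\lambda^{\mathsf T}\big(f^c(x_{(i)})-f^c(\tilde x)-J_{f^c}(\tilde x)(x_{(i)}-\tilde x)\big)$ with $\tilde\lambda\ge 0$, which is nonnegative by convexity and can simply be dropped, while the remaining $\lambda_{(i)}^{\mathsf T}\big(f^c(\tilde x)-f^c(x_{(i)})-J_{f^c}(x_{(i)})(\tilde x-x_{(i)})\big)$ is exactly the $(1-\theta)$-weighted integral over the half-segment and combines directly with $\overline H_{\mathcal{L}^{nc}}^{(i)}$ to give $\Lambda_m^{(i)}$. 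Your variant (pivoting only in $x$ and keeping the $f^c$-Jacobians at $x_1,x_2$) also works, but the resulting Bregman terms are over the \emph{full} segment $[x_1,x_2]$, and you then need the extra step of splitting each integral at $\bar x$ and using $1-(1-s)\phi\ge 1-\phi$ (with $\nabla^2 f^c_j\succeq 0$) to extract the half-segment $(1-\phi)$-weighted piece that $\Lambda_m^{(i)}$ requires; the other half is $\ge 0$ and dropped. Both routes reach the same contradiction.
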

\end{subtheorem}
\begin{proof}[Proof of \cref{theorem:isolate_KKTa}]
Suppose for some $t\in[0,S]$, there is another KKT point $z^+=(x^+,\lambda^+)$ satisfying
$$
0<\|z^+-z^\ast(t)\|_{\eta}\leq\delta
\qquad\textrm{and}\qquad
x^+\neq x^\ast(t).
$$
By the KKT condition \cref{eq:KKT_form1:primal}, we have
$$
0\geq
(x^+-x^\ast(t))^T
\big(
-\nabla_x \mathcal{L}^{nc}(x^\ast(t),\lambda^\ast(t),t)
-J_{f^c,x}(x^\ast(t),t)^T\lambda^\ast(t)
\big),
$$
and
$$
\begin{aligned}
0 \geq\ &
(x^\ast(t)-x^+)^T\big(-\nabla_x \mathcal{L}^{nc}(x^+,\lambda^\ast(t),t) \\
&\qquad
-J_{f^c,x}(x^+,t)^T\lambda^+
+
J_{f^{nc},x}(x^+,t)^T
\big(\lambda^\ast(t)-\lambda^+\big)\big).
\end{aligned}
$$
Thus
$$
\begin{aligned}
0\geq\, &
(x^+ -x^\ast(t))^T
\big(
\nabla_x \mathcal{L}^{nc}(x^+,\lambda^\ast(t),t)
-\nabla_x \mathcal{L}^{nc}(x^\ast(t),\lambda^\ast(t),t) \\
&+J_{f^c,x}(x^+,t)^T\lambda^+ -J_{f^c,x}(x^\ast(t),t)^T\lambda^\ast(t)
+
J_{f^{nc},x}(x^+,t)^T
\big(
\lambda^+-\lambda^\ast(t)\big)\big) \\
=\ &
(x^+-x^\ast(t))^T B_{\mathcal{L}^{nc}}(t)(x^+ -x^\ast(t))
\\
&+(x^+-x^\ast(t))^T\left(J_{f^c,x}(x^+,t)^T\lambda^+ -J_{f^c,x}(x^\ast(t),t)\lambda^\ast(t)\right) \\
&+(x^+-x^\ast(t))^T
J_{f^{nc},x}(x^+,t)^T
\big(
\lambda^+-\lambda^\ast(t)\big),
\end{aligned}
$$
where $B_{\mathcal{L}^{nc}}(t):=\overline{H}_{\mathcal{L}^{nc}}(x^+ -x^\ast(t),t)$. Also,
$$
(\lambda^+-\lambda^\ast(t))^T
\left(f(x^+,t)-f(x^\ast(t),t)\right)
=
-{\lambda^+}^T f(x^\ast(t),t)
-{\lambda^\ast(t)}^T f(x^+,t)\geq 0
$$
by the complementary slackness condition. Therefore
$$
\begin{aligned}
0\geq\ &(x^+-x^\ast(t))^T B_{\mathcal{L}^{nc}}(t)(x^+ -x^\ast(t))
\\
&+(x^+-x^\ast(t))^T\left(J_{f^c,x}(x^+,t)^T\lambda^+ -J_{f^c,x}(x^\ast(t),t)\lambda^\ast(t)\right) \\
&
+\big(
\lambda^+ -\lambda^\ast(t)\big)^T
\left(
J_{f^{nc},x}(x^+,t)(x^+ - x^\ast(t))
-
\left(
f(x^+,t) - f(x^\ast(t),t)\right)
\right).
\end{aligned}
$$
Notice that
$$
\begin{aligned}
&(x^+-x^\ast(t))^T\left(J_{f^c,x}(x^+,t)^T\lambda^+ -J_{f^c,x}(x^\ast(t),t)\lambda^\ast(t)\right) \\
&
+\big(
\lambda^+ - \lambda^\ast(t)\big)^T
\left(
J_{f^{nc},x}(x^+,t)(x^+ - x^\ast(t))
-
\left(
f(x^+,t) - f(x^\ast(t),t)\right)
\right) \\
=\ &
{\lambda^+}^T
\left(
f^c(x^\ast(t),t)-f^c(x^+,t)
-J_{f^c,x}(x^+,t)(x^\ast(t)-x^+)
\right) \\
&
+\lambda^\ast(t)^T
\left(
f^c(x^+,t)-f^c(x^\ast(t),t)
-J_{f^c,x}(x^\ast(t),t)(x^+ -x^\ast(t))
\right) \\
&
-
\big(
\lambda^+-\lambda^\ast(t)\big)^T
\left(
f^{nc}(x^+,t)+J_{f^{nc},x}(x^+,t)(x^\ast(t)-x^+)
-f^{nc}(x^\ast(t),t)\right) \\
\geq\ &
(x^+-x^\ast(t))^T
\left(\frac{1}{2}\sum_{i=1}^m \lambda^\ast_i(t)B_{f^c_i}(t)\right)(x^+-x^\ast(t)) \\
&
-\big(
\lambda^+-\lambda^\ast(t)\big)^T
\left(
f^{nc}(x^+,t)+J_{f^{nc},x}(x^+,t)(x^\ast(t)-x^+)
-f^{nc}(x^\ast(t),t)\right),
\end{aligned}
$$
where we denote $B_{f^c_i}(t):=\overline{H}_{f^c_i}(x^+-x^\ast(t),t)$. Therefore
$$
\begin{aligned}
0\geq\ &
(x^+-x^\ast(t))^T
\left(B_{\mathcal{L}^{nc}}(t)
+\frac{1}{2}\sum_{i=1}^m \lambda^\ast_i(t)B_{f^c_i}(t)\right)(x^+ -x^\ast(t)) \\
&
-\big(\lambda^+-\lambda^\ast(t)\big)^T\left(
f^{nc}(x^+,t)+J_{f^{nc},x}(x^+,t)(x^\ast(t)-x^+)
-f^{nc}(x^\ast(t),t)\right) \\
\geq\ &
{\Lambda}_m(\delta)\|x^+-x^\ast(t)\|^2
-\frac{{M}_{nc}(\delta)}{2}
\|x^+-x^\ast(t)\|^2
\big\|
\lambda^+-\lambda^\ast(t)
\big\| \\
\geq\ &
\left({\Lambda}_m(\delta)
-\frac{\sqrt{\eta}}{2}\delta {M}_{f^{nc}}(\delta)\right)
\|x^+-x^\ast(t)\|^2.
\end{aligned}
$$
However, if \cref{eq:cond_isolate_KKT} holds, the right-hand side of the above inequality is then positive, leading to a contradiction.

Now we prove that \cref{eq:cond_isolate_KKT} holds if \cref{eq:cont_time_tracking_cond} holds for some $\delta\leq 2\eta^{-1/2}M_{\lambda}$. We have
$$
\min\left\{\Lambda_m(\delta),\eta\epsilon\right\}
\leq\frac{\Lambda_m(\delta)+\eta\epsilon}{2},
$$
and so
$$
\begin{aligned}
\Lambda_m(\delta)
\geq 2\min\left\{\Lambda_m(\delta),\eta\epsilon\right\}-\eta\epsilon.
\end{aligned}
$$
On the other hand, \cref{eq:cont_time_tracking_cond} implies that
$$
\begin{aligned}
\min\left\{\Lambda_m(\delta),\eta\epsilon\right\}
&> \frac{\sqrt{\eta}}{4}\delta M_{nc}(\delta)
+\delta^{-1}
\left(\beta^{-1}\sigma_\eta+\sqrt{\eta}\epsilon M_{\lambda}\right) \\
&>\frac{\sqrt{\eta}}{4}\delta M_{nc}(\delta)
+\delta^{-1}\sqrt{\eta}\epsilon M_{\lambda}.
\end{aligned}
$$
Since $\delta\leq 2\eta^{-1/2}M_{\lambda}$, we have $\delta^{-1}\sqrt{\eta}\epsilon M_{\lambda}\geq \eta\epsilon/2$, and so
$$
\begin{aligned}
\Lambda_m(\delta)
&>
2
\left(\frac{\sqrt{\eta}}{4}\delta M_{nc}(\delta)
+\delta^{-1}\sqrt{\eta}\epsilon M_{\lambda}\right)-\eta\epsilon
\geq
\frac{\sqrt{\eta}}{2}\delta M_{nc}(\delta).
\end{aligned}
$$
This completes the proof.
\end{proof}

\begin{proof}[Proof of \cref{theorem:isolate_KKTb}]
For notational simplicity we temporarily denote $z_{(i)}^\ast(t)=\big(x_{(i)}^\ast(t),\lambda_{(i)}^\ast(t)\big)$ by $z_{(i)}=\big(x_{(i)},\lambda_{(i)}\big)$. Let
$$
\tilde\theta_1 := \frac{\delta_{(1)}}{\delta_{(1)}+\delta_{(2)}},\qquad
\tilde\theta_2 := \frac{\delta_{(2)}}{\delta_{(1)}+\delta_{(2)}},\qquad
\tilde z:=\tilde\theta_2 z_{(1)}
+\tilde\theta_1 z_{(2)}.
$$
Then it can be checked that
\begin{equation}\label{eq:proof_isolate_KKT:tilde_z}
z_{(2)}-z_{(1)}
=-\frac{z_{(1)}-\tilde z}{\tilde\theta_1}
=\frac{z_{(2)}-\tilde z}{\tilde\theta_2}
\end{equation}
and $\big\|\tilde z-z_{(1)}\big\|_{\eta}
\leq\delta_{(1)},
\big\|\tilde z-z_{(2)}\big\|_{\eta}
\leq\delta_{(2)}$. 
Now by \cref{eq:KKT_form1:primal}, we have
$$
\begin{aligned}
0
& \geq
\left(x_{(2)}-x_{(1)}\right)^T
\left(
-\nabla_x c\big(x_{(1)},t\big)
-
J_{f,x}\big(x_{(1)},t\big)^T
\lambda^\ast_{(1)}\right), \\
0
& \geq
\left(x_{(1)}-x_{(2)}\right)^T
\left(
-\nabla_x c\big(x_{(2)},t\big)
-
J_{f,x}\big(x_{(2)},t\big)^T
\lambda^\ast_{(2)}\right).
\end{aligned}
$$
Taking their sum, we get
\begin{equation}\label{eq:proof_isolate_KKT:part2_primal}
\begin{aligned}
0\geq &
\left(x_{(2)}-x_{(1)}\right)^T\!\!
\Big(\!\!
\left(\nabla_x c\big(x_{(2)},t\big)
-\nabla_x c\big(\tilde x,t\big)\right)
\!-\!
\left(\nabla_x c\big(x_{(1)},t\big)
-\nabla_x c\big(\tilde x,t\big)\right)
 \\
&
+\!\!
\left(
J_{f,x}\big(x_{(2)},t\big)^T
\lambda_{(2)}
\!-\!
J_{f,x}\big(\tilde x,t\big)^T
\tilde\lambda\right) 
\!-\!
\left(
J_{f,x}\big(x_{(1)},t\big)^T
\lambda_{(1)}
\!-\!
J_{f,x}\big(\tilde x,t\big)^T
\tilde\lambda
\right)\!\!
\Big).
\end{aligned}
\end{equation}
For the dual variables, by \cref{eq:KKT_form1:dual1},
\begin{equation}\label{eq:proof_isolate_KKT:part2_dual1}
\begin{aligned}
0\geq\ &
-\left(\lambda_{(2)}-\lambda_{(1)}\right)^T
\left(f\big(x_{(2)},t\big)-f\big(x_{(1)},t\big)\right) 
\\
=\ &
-\left(\lambda_{(2)}-\lambda_{(1)}\right)^T
\left(f\big(x_{(2)},t\big)-f\big(\tilde x,t\big)
-\left(f\big(x_{(1)},t\big)-f\big(\tilde x,t\big)\right)\right).
\end{aligned}
\end{equation}
Now, for each $i=1,2$, we have
\begin{equation}\label{eq:proof_isolate_KKT:part2_temp1}
\begin{aligned}
&\left(x_{(i)}-\tilde x\right)^T
\big(\nabla_x c\big(x_{(i)},t\big)
-\nabla_x c\big(\tilde x,t\big)
+
J_{f,x}\big(x_{(i)},t\big)^T
\lambda_{(i)}
-
J_{f,x}\big(\tilde x,t\big)^T
\tilde\lambda\big)
\\
&
-\big(
\lambda_{(i)}-\tilde\lambda\big)^T
\left(
f\big(x_{(i)},t\big) - f\big(\tilde x,t\big)\right)
\\
=\ &
\left(x_{(i)}-\tilde x\right)^T
\left(\nabla_x\mathcal{L}^{nc}\big(x_{(i)},\lambda_{(i)},t\big)
-\nabla_x\mathcal{L}^{nc}\big(\tilde x,\lambda_{(i)},t\big)\right)
\\
&
+\big(\lambda_{(i)}-\tilde\lambda\big)^T
\left(f^{nc}\big(\tilde x,t\big)+J_{f^{nc},x}\big(\tilde x,t\big)\left(x_{(i)}-\tilde x\right)-f^{nc}\big(x_{(i)},t\big)\right)
\\
&
+\lambda_{(i)}^T
\left(f^{c}\big(\tilde x,t\big)-f^{c}\big(x_{(i)},t\big)
-J_{f^c}\big(x_{(i)},t\big)^T\left(\tilde x-x_{(i)}\right)\right)
\\
&
+\tilde\lambda^T
\left(
f^{c}\big(x_{(i)},t\big)-f^{c}\big(\tilde x,t\big)
-J_{f^c}\big(\tilde x,t\big)^T\left(x_{(i)}-\tilde x\right)
\right)
\\
\geq\ &
\left(x_{(i)}-\tilde x\right)^T
B^{(i)}\left(x_{(i)}-\tilde x\right)
-\frac{M_{nc}^{(i)}\big(\delta_{(i)}\big)}{2}
\left\|x_{(i)}-\tilde x\right\|^2
\big\|
\lambda_{(i)}-\tilde\lambda
\big\|,
\end{aligned}
\end{equation}
where
$$
\begin{aligned}
B^{(i)}
:=\ &\int_0^1
\nabla_{xx}^2\mathcal{L}^{nc}\big(x_{(i)}+\theta\big(\tilde x-x_{(i)}\big),\lambda_{(i)},t\big)\,d\theta \\
&
+\sum_{j=1}^m
\lambda_{(i),j}
\int_{0}^1(1-\theta)
\nabla_{xx}^2 f_{j}^c\big(x_{(i)}+\theta\big(\tilde x-x_{(i)}\big),t\big)\,d\theta.
\end{aligned}
$$
By summing \cref{eq:proof_isolate_KKT:part2_primal} and \cref{eq:proof_isolate_KKT:part2_dual1}, and plugging in \cref{eq:proof_isolate_KKT:tilde_z} and \cref{eq:proof_isolate_KKT:part2_temp1}, we can see that
$$
\begin{aligned}
0\geq\ &
\sum_{i=1,2}\frac{1}{\tilde\theta_i}
\left(
\left(x_{(i)}-\tilde x\right)^T
B^{(i)}\left(x_{(i)}-\tilde x\right)
-\frac{M_{nc}^{(i)}\big(\delta_{(i)}\big)}{2}
\left\|x_{(i)}-\tilde x\right\|^2
\big\|
\lambda_{(i)}-\tilde\lambda\big\|
\right) \\
\geq\ &
\sum_{i=1,2}\tilde\theta_i
\left(\Lambda_m^{(i)}\big(\delta_{(i)}\big)
-\frac{\sqrt{\eta}}{2}\delta_{(i)}M_{nc}\big(\delta_{(i)}\big)\right)\big\|x_{(2)}-x_{(1)}\big\|^2.
\end{aligned}
$$
But \cref{eq:cond_isolate_KKT_2} then implies that the right-hand side of the above inequality is positive, leading to a contradiction.
\end{proof}

\begin{remark}\label{remark:isolate_KKT}
It should be noted that the condition \cref{eq:cond_isolate_KKT} does not exclude the possibility that at time $t$, there exists $\lambda^+\neq\lambda^\ast(t)$ such that $(x^\ast(t),\lambda^+)$ is also a KKT point of \cref{eq:main_problem_cont} and
$\|(x^\ast(t),\lambda^+)-(x^\ast(t),\lambda^\ast)\|_\eta\leq\delta$, unless we also assume that the optimal dual variable associated with $x^\ast(t)$ is unique at time $t$. A typical constraint qualification  that guarantees the uniqueness of the optimal Lagrange multiplier is the linear independence constraint qualification (LICQ) \cite{wachsmuth2013licq}, which cannot be directly used in our setting but can be possibly checked if we write \cref{eq:main_problem_cont} in some alternative formulation.
\end{remark}

Let us further assume that the optimal dual variable associated with $x^\ast(t)$ is unique for all $t\in[0,S]$ as \cref{remark:isolate_KKT} points out. Then \cref{theorem:isolate_KKTa,theorem:isolate_KKTb} show that, under certain conditions, the KKT points for a given time instant will always be isolated. Especially, when the condition \cref{eq:cont_time_tracking_cond} is satisfied for some $\delta$ that is not too large, there is no ambiguity in which of the KKT trajectories will be tracked by the continuous-time algorithm \cref{eq:cont_time_limit}.

\section{Numerical example}
\label{sec:results}

In this section we illustrate a numerical example that is extracted from a real-world application in power system operation.

The goal is to solve the following time-varying optimization problem
$$
\begin{aligned}
\min_{p\in\mathbb{R}^n,q\in\mathbb{R}^n}\quad &
\sum_{i=1}^n c_{p,i}\big (p_i-{p}^{\mathrm{PV}}_{\tau,i} \big)^2
+c_{q,i} q_i^2 \\
\textrm{s.t.}\quad &
{V}_{\min}\leq V_j(p,q,p^{\mathrm{L}}_{\tau},q^{\mathrm{L}}_{\tau}) \leq {V}_{\max},\quad j=1,\ldots,m, \\
&(p_i,q_i)\in\mathcal{X}_{\tau,i},\qquad i=1,\ldots,n.
\end{aligned}
$$
This optimization problem is solved at each $\tau$ to optimally operate the $n$ inverters each of which is connected to a photovoltaic  panel in a distribution feeder. Here the decision variables $p$ and $q$ represent the real and reactive power injections of the $n$ inverters. For the cost function, $p^{\mathrm{PV}}_{\tau,i}$ is the maximum available real power of the photovoltaic panel connected to inverter $i$ at time $\tau$; and $c_{p,i}$ and $c_{q,i}$ are positive constants for each $i$. For the constraints, $p^{\mathrm{L}}_\tau\in\mathbb{R}^m$ and $q^{\mathrm{L}}_\tau\in\mathbb{R}^m$ represent the time-varying real and reactive loads, $V_j:\mathbb{R}^n\times\mathbb{R}^n\times\mathbb{R}^m\times\mathbb{R}^m\rightarrow\mathbb{R}$ is the implicit function derived from the power flow equations that represents the mapping from power injections to the (normalized) voltage magnitude at bus $j$ in a modified IEEE 37-node test feeder \cite{wang2018explicit,schneider2018analytic}, $V_{\min}$ and $V_{\max}$ are the bounds on the (normalized) voltage magnitudes, and $\mathcal{X}_{\tau,i}$ is given by
$$
\mathcal{X}_{\tau,i}
=\left\{(p,q)\in\mathbb{R}^2:
0\leq p\leq p^{\mathrm{PV}}_{\tau,i}, p^2+q^2\leq S_{i,\max}^2
\right\},
$$
where $S_{i,\max}$ is a positive constant for each $i=1,\ldots,n$. In this numerical example we have $n=18$, $m=36$, $V_{\min}=0.95$, $V_{\max}=1.05$, $c_{p,i}=3$ and $c_{q,i}=1$ for all $i=1,\ldots,n$, $S_{3,\max}=1$, $S_{18,\max}=3.5$, $S_{i,\max}=2$ for $i\neq 3,18$, $q^{\mathrm{L}}_\tau=0.5p^{\mathrm{L}}_\tau$ for each $\tau$. \Cref{fig:profiles} demonstrates the curves of total photovoltaic generation $\sum_i p^{\mathrm{PV}}_{\tau,i}$, individual loads $p^{\mathrm{L}}_\tau$ and total load $\sum_j p^{\mathrm{L}}_{\tau,j}$ in the time domain, obtained from real-world data \cite{Bank13}. The time period is from 09:00 to 15:00, and we set $\Delta_T=1\,\mathrm{s}$ so that $T=21600$. For more detailed setting we refer to \cite
{tang2018feedback}. For the algorithm \cref{eq:RegPPD}, we set $\epsilon=10^{-5}$, $\eta=0.75/\epsilon$, $\alpha=0.04$ and $\lambda_{\mathrm{prior}}=0$.

\begin{figure}[tbhp]
    \centering
    \includegraphics[width=.85\textwidth]{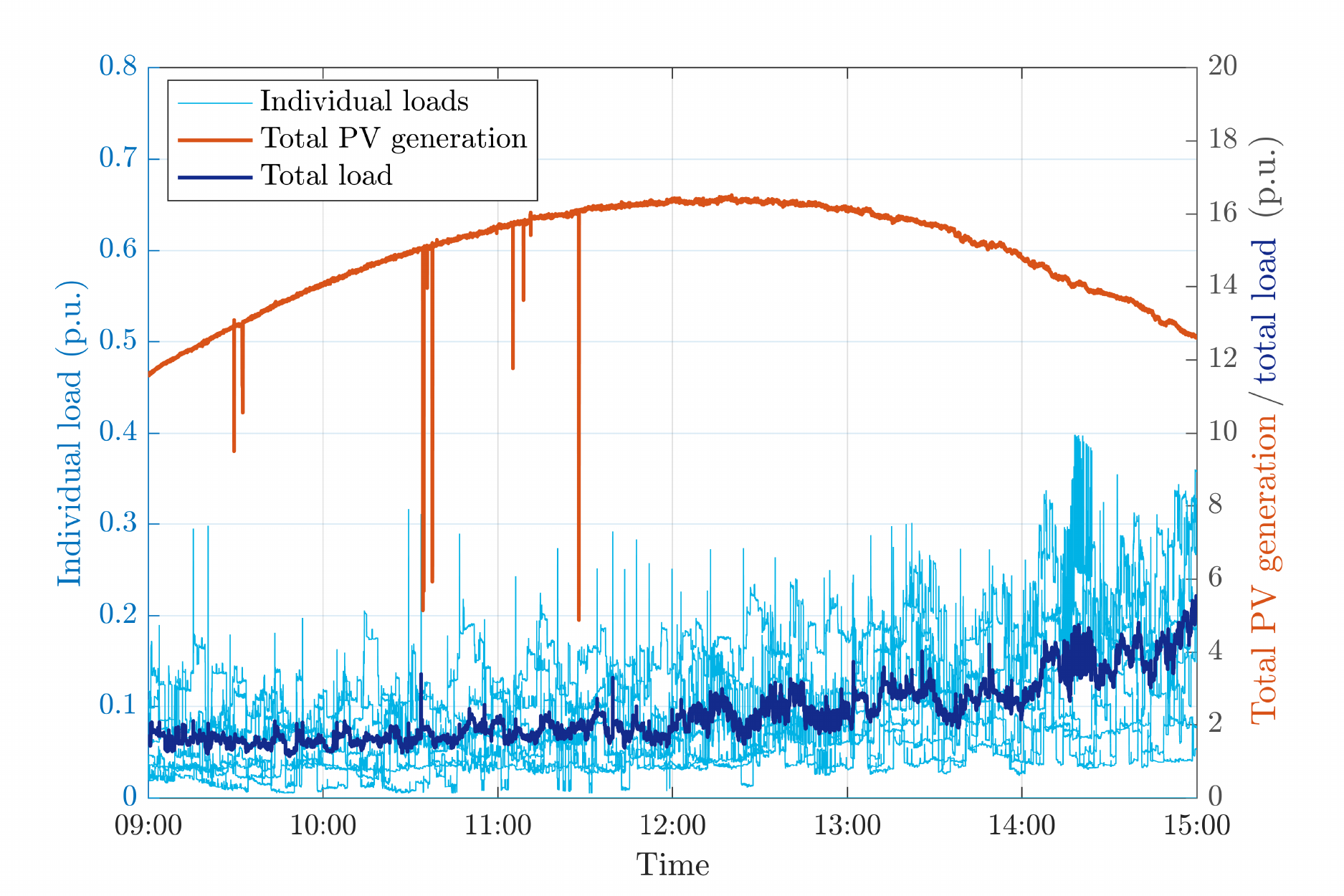}
    \caption{Illustration of individual loads $p^{\mathrm{L}}_\tau$, total load $\sum_j p^{\mathrm{L}}_{\tau,j}$ and total photovoltaic (PV) generation $\sum_i p^{\mathrm{PV}}_{\tau,i}$.}
    \label{fig:profiles}
\end{figure}

\begin{figure}[tbhp]
    \centering
    \includegraphics[width=.9\textwidth]{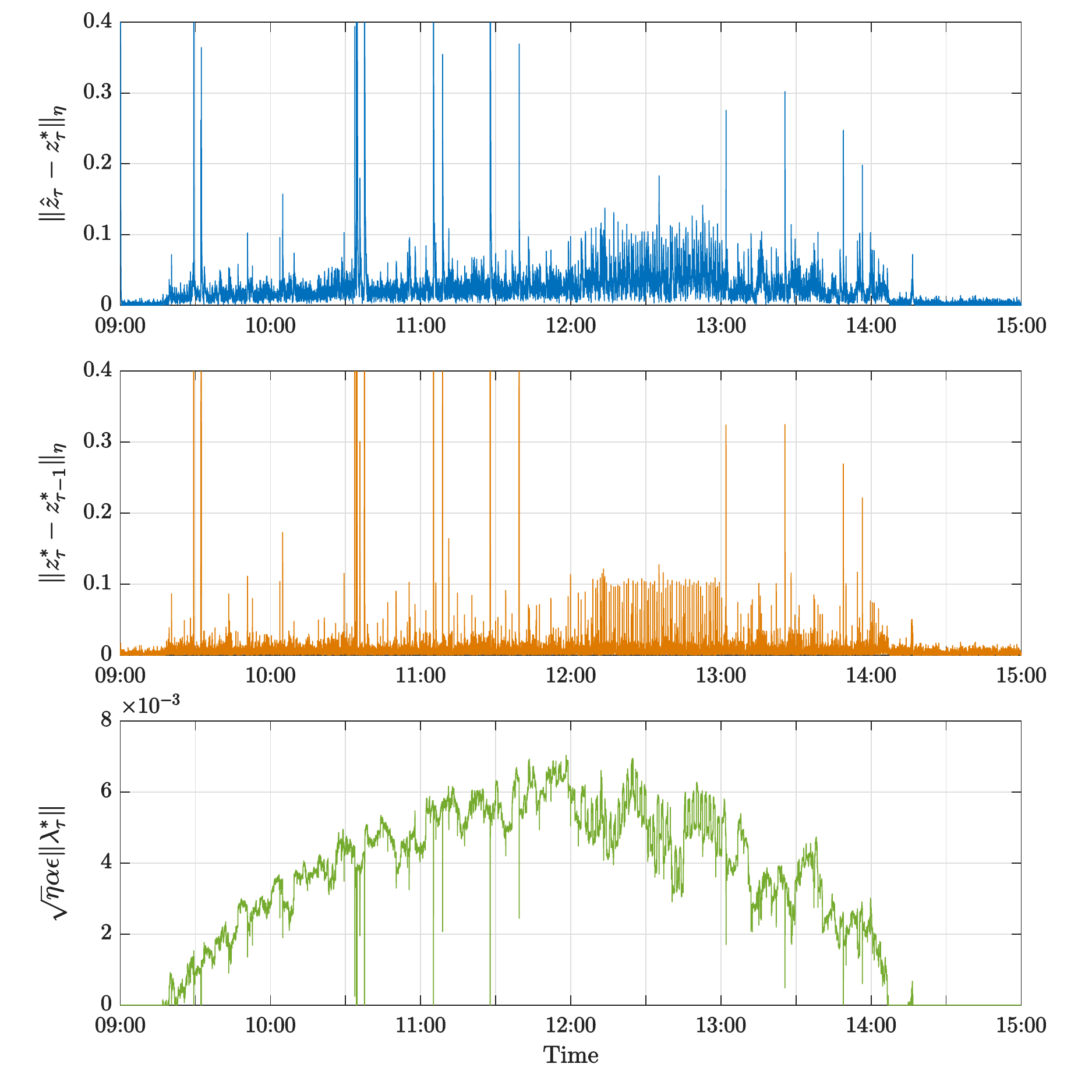}
    \caption{Illustrations of $\left\|\hat z_\tau-z^\ast_\tau\right\|_\eta$, $\big\|z^\ast_\tau-z^\ast_{\tau-1}\big\|_\eta$ and $\sqrt{\eta}\alpha\epsilon\big\|\lambda^\ast_\tau\big\|$.}
    \label{fig:tracking_error}
\end{figure}

\Cref{fig:tracking_error} shows the tracking error $\left\|\hat z_\tau-z^\ast_\tau\right\|_\eta$
from the proposed algorithm \cref{eq:RegPPD}, together with $\|z^\ast_{\tau}-z^\ast_{\tau-1}\|_\eta$ and $\sqrt{\eta}\alpha\epsilon\big\|\lambda^\ast_\tau\big\|$. It can be seen that apart from some spikes, the tracking error is bounded below $0.2$ for all $\tau$. We also have the following statistics:
$$
\frac{1}{T}\sum_{\tau}\|\hat z_\tau-z^\ast_\tau\|_\eta=2.56\times 10^{-2},
\qquad
\frac{1}{T}\sum_{\tau}\frac{\|\hat z_\tau-z^\ast_\tau\|_\eta}{\|z^\ast_\tau\|_\eta}=7.02\times 10^{-3}.
$$
As a comparison, we have $T^{-1}\sum_\tau \|z^\ast_\tau-z^\ast_{\tau-1}\|_\eta=8.55\times 10^{-3}$. Moreover, the illustrations seem to suggest that $\left\|\hat z_\tau-z^\ast_\tau\right\|_\eta$ is strongly correlated with $\|z^\ast_\tau-z^\ast_{\tau-1}\|_\eta\approx \big\|\frac{d}{dt}{z}^\ast(\tau\Delta_T)\big\|\Delta_T$ and $\sqrt{\eta}\alpha\epsilon\big\|\lambda^\ast_\tau\big\|$ in a way similar to the eventual tracking error bound \cref{eq:main_tracking_error2}.

\begin{figure}[tbhp]
    \centering
    \includegraphics[width=.85\textwidth]{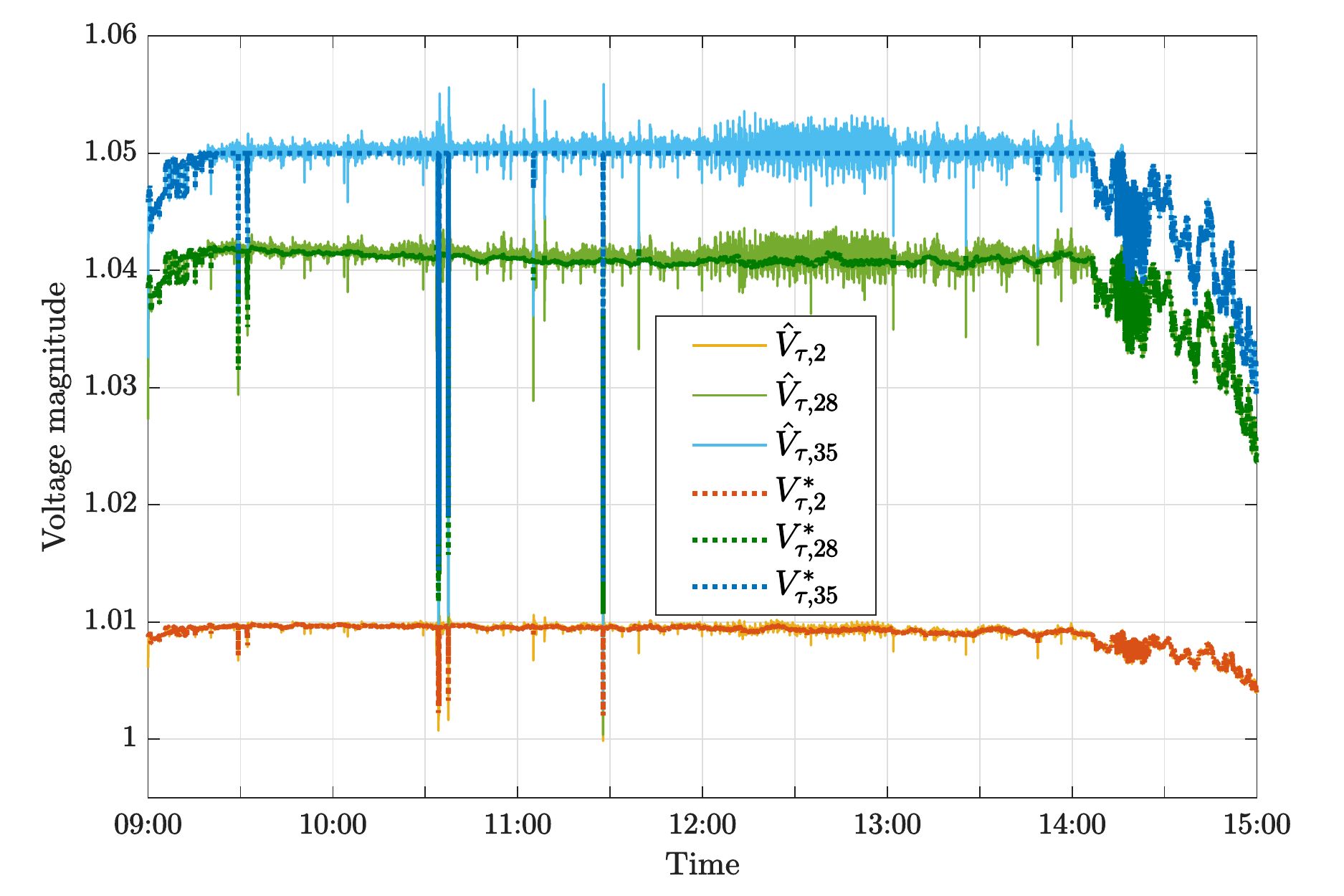}
    \caption{$\hat V_{\tau,j}$ and $V^\ast_{\tau,j}$ for $j=2,28,35$.}
    \label{fig:voltages}
\end{figure}

\Cref{fig:voltages} compares
$$
\hat V_{\tau,j}:=V_j(\hat p_\tau,\hat q_\tau,p^\mathrm{L}_\tau,q^\mathrm{L}_\tau)
$$
with
$$
V^\ast_{\tau,j}:=V_j(p^\ast_\tau,q^\ast_\tau,p^\mathrm{L}_\tau,q^\mathrm{L}_\tau)
$$
for $j=2,28$ and $35$. It can be seen that the constraint $V_{35}(p,q,p^\mathrm{L}_\tau,q^\mathrm{L}_\tau)\leq V_{\max}$ is slightly violated for certain time instants. The violation is very small though, as simulation gives
$$
\frac{1}{T}\sum_{\tau,j}
\left(\left[\hat V_{\tau,j}-V_{\max}\right]_+
+\left[V_{\min}-\hat V_{\tau,j}\right]_+\right)
=3.67\times 10^{-4},
$$
where $[\cdot]_+$ denotes the positive part of a real number. The violation is partly a consequence of introducing regularization, which drives the dual variables towards zero and leads to an underestimation of the optimal dual variables.

We emphasize that this numerical example is derived from real-world scenarios, and the theoretical assumptions may not apply here. Nevertheless, the simulations results seem to be in accordance with the theory presented in the text.

\section{Conclusions and future directions}
\label{sec:conclusions}

In this paper, we conducted a comprehensive study on the regularized primal-dual gradient method and its continuous-time counterpart for time-varying nonconvex optimization. Sufficient conditions that guarantee bounded tracking error were derived for the proposed discrete-time algorithm. A continuous-time version of the algorithm, formulated as a system of differential inclusions, was also considered and analyzed. Implications of these analytical results were discussed, and a numerical example was presented to illustrate the performance of the proposed algorithm in a real-world application.

We make the following remarks on possible generalizations and directions to explore:

(1) In the formulation \cref{eq:main_problem_cont}, only explicit inequality constraints are considered. It turns out that equality constraints can be handled similarly, and all the theoretical results can be readily generalized. We omitted the associated discussion in this paper to avoid tedious but uninspiring derivations.

(2) Another possible and potentially useful generalization of \cref{eq:main_problem_cont} is
$$
\begin{aligned}
\min_{x \in \mathbb{R}^n}\quad & c(x,t)+h(x,t) \\
\textrm{s.t.}\quad & f(x,t)\leq 0,
\end{aligned}
$$
where $h:\mathbb{R}^n\times\mathbb{R}\rightarrow\mathbb{R}\cup\{+\infty\}$ and $h(\cdot,t)$ is a closed proper convex function for each $t$. The primal update \cref{eq:RegPPD:primal} will then take the form
$$
\hat x_\tau = \mathrm{prox}_{\alpha h_\tau}
\left[
\hat x_{\tau-1}
-\alpha \left(\nabla c_\tau(\hat x_{\tau-1})
+
J_{f_\tau}(\hat x_{\tau-1})^T
\hat\lambda_{\tau-1}\right)
\right],
$$
where $\mathrm{prox}$ denotes the \emph{proximal operator} \cite{parikh2014proximal}, and $h_\tau(x):=h(x,\tau\Delta_T)$. As proximal operators share many similar properties with projection operators, it is expected that most results in this paper can be generalized as well.

(3) In this paper we assumed that the KKT trajectory $z^\ast(t)$ is Lipschitz continuous over $[0,S]$. In reality, it is possible that the KKT trajectories are only of bounded variation, and sometimes it is necessary to deal with jumps. So far, it has remained unclear how to develop a time-varying optimization algorithm that can handle this issue without sacrificing efficiency.

(4) In this paper we defined the tracking error by the distance from $\hat z_\tau$ to the true KKT solution $z^\ast_\tau$. In some situations, this definition could be inappropriate for characterizing the sub-optimality of $\hat z_\tau$, and existing literature has proposed other metrics for evaluating the tracking error such as the dynamic regret \cite{mokhtari2016online,yi2016tracking} or the fixed-point residual error \cite{simonetto2017time}. It would be interesting to investigate the tracking performance of the regularized primal-dual algorithm under such metrics, especially to see whether weaker conditions for bounded tracking error can be derived.

\appendix

\section{Proof of \texorpdfstring{\cref{lemma:approx_contraction}}{Lemma \ref{lemma:approx_contraction}}}
\label{sec:proof_approx_contraction}
The KKT conditions \cref{eq:KKT_form1} can be written equivalently in the form of a fixed-point equation
\begin{subequations}\label{eq:KKT_form2}
\begin{align}
x^\ast(t)
&=\mathcal{P}_{\mathcal{X}(t)}
\left[x^\ast(t)
-\alpha\left(\nabla_x c(x^\ast(t),t)
+
J_{f,x}(x^\ast(t),t)^T
\lambda^\ast(t)\right)\right],
\label{eq:KKT_form2:primal}\\
\lambda^\ast(t)
&=\mathcal{P}_{\mathbb{R}^m_+}
\left[\lambda^\ast(t)
+\beta f(x^\ast(t),t)\right],
\label{eq:KKT_form2:dual1}
\end{align}
\end{subequations}
where $\alpha$ and $\beta$ are any positive real number.

Now we have
$$
\begin{aligned}
&
\nabla c_\tau(\hat x_{\tau-1})-\nabla c_\tau(x^\ast_\tau)
+
J_{f_\tau}(\hat x_{\tau-1})^T
\hat\lambda_{\tau-1}
-
J_{f_\tau}(x^\ast_{\tau})^T
\lambda^\ast_{\tau}
\\
=\, &
\nabla_x \mathcal{L}^{nc}_\tau(\hat x_{\tau-1},\lambda^\ast_\tau)
-\nabla_x \mathcal{L}^{nc}_\tau(x^\ast_{\tau},\lambda^\ast_\tau)
+
J_{f^{nc}_\tau}(\hat x_{\tau-1})^T
\big(
\hat\lambda_{\tau-1}-\lambda^\ast_\tau\big)
\\
&+J_{f^c_\tau}(\hat x_{\tau-1})^T\hat\lambda_{\tau-1}
-J_{f^c_\tau}(x^\ast_{\tau})^T\lambda^\ast_\tau\\
=\, &
B_{\mathcal{L}^{nc}_\tau}(\hat x_{\tau-1}-x^\ast_\tau)
+
J_{f^{nc}_\tau}(\hat x_{\tau-1})^T
\big(
\hat\lambda_{\tau-1}-\lambda^\ast_\tau
\big)
+J_{f^c_\tau}(\hat x_{\tau-1})^T\hat\lambda_{\tau-1}
-J_{f^c_\tau}(x^\ast_{\tau})^T\lambda^\ast_\tau,
\end{aligned}
$$
where, recalling the definition of $\overline{H}_{\mathcal{L}^{nc}}$ in~\cref{eq:def_HL} and advocating the Fundamental Theorem of Calculus,
we defined $B_{\mathcal{L}^{nc}_\tau}$ as $B_{\mathcal{L}^{nc}_\tau}
:= \overline{H}_{\mathcal{L}^{nc}}(\hat x_{\tau-1}-x^\ast_\tau\,,\tau\Delta_T)$. 
Then by \cref{eq:RegPPD} and \cref{eq:KKT_form2:primal}, and using the nonexpansiveness of projection onto convex sets, we get
\begin{equation}\label{eq:proof_approx_contraction:primal_diff}
\begin{aligned}
&\left\|\hat x_{\tau}-x^\ast_\tau\right\|^2 
\\
\leq\ &
\Big\|
\left(I-\alpha B_{\mathcal{L}^{nc}_\tau}\right)(\hat x_{\tau-1}-x^\ast_\tau) 
\\
&\quad
-\alpha
\left(
J_{f^{nc}_\tau}(\hat x_{\tau-1})^T
\big(\hat\lambda_{\tau-1}-\lambda^\ast_\tau\big)
+J_{f^c_\tau}(\hat x_{\tau-1})^T\hat\lambda_{\tau-1}
-J_{f^c_\tau}(x^\ast_{\tau})^T\lambda^\ast_\tau
\right)
\Big\|^2.
\end{aligned}
\end{equation}
We have,
\begin{equation}\label{eq:proof_approx_contraction:temp1}
\begin{aligned}
&\left\|
J_{f^{nc}_\tau}(\hat x_{\tau-1})^T
\big(\hat\lambda_{\tau-1}-\lambda^\ast_\tau\big)
+J_{f^c_\tau}(\hat x_{\tau-1})^T\hat\lambda_{\tau-1}
-J_{f^c_\tau}(x^\ast_{\tau})^T\lambda^\ast_\tau\right\|
\\
=\ &
\left\|
J_{f_\tau}(\hat x_{\tau-1})^T
\big(\hat\lambda_{\tau-1}-\lambda^\ast_\tau\big)
+\left(J_{f^c_\tau}(\hat x_{\tau-1})-J_{f^c_\tau}(x^\ast_{\tau})\right)^T
\lambda^\ast_\tau\right\|
\\
\leq\ &
\left\|
J_{f_\tau}(\hat x_{\tau-1})^T
\big(
\hat\lambda_{\tau-1}-\lambda^\ast_\tau\big)\right\|
+ \left\|J_{f^c_\tau}(\hat x_{\tau-1})-J_{f^c_\tau}(x^\ast_{\tau})
\right\|
\sup_{t\in[0,S]}\left\|\lambda^\ast(t)\right\|
 \\
\leq\ &
L_f(\delta)\big\|
\hat\lambda_{\tau-1}-\lambda^\ast_\tau\big\|
+M_c(\delta)\left\|\hat x_{\tau-1}-x^\ast_\tau\right\|
\sup_{t\in[0,S]}\left\|\lambda^\ast(t)\right\|,
\end{aligned}
\end{equation}
where we used \cref{eq:curve_bound_J} and the definitions of $M_c(\delta)$ and $L_f(\delta)$.

For the dual variables, by \cref{eq:RegPPD:dual1} and \cref{eq:KKT_form2:dual1}, and using the nonexpansiveness of projection onto convex sets,
$$
\begin{aligned}
&\big\|
\hat\lambda_\tau-\lambda^\ast_\tau
\big\|^2 \\
=\ &
\big\|
(1-\eta\alpha\epsilon)
\big(
\hat\lambda_{\tau-1}-\lambda^\ast_\tau\big)
-\eta\alpha\epsilon
\big(
\lambda^\ast_\tau-\lambda_{\mathrm{prior}}\big)
+\eta\alpha
\left(
f_\tau(\hat x_{\tau-1})-f_\tau(x^\ast_\tau)\right)
\big\|^2 \\
=\ &
\left\|(1-\eta\alpha\epsilon)
\big(
\hat\lambda_{\tau-1}-\lambda^\ast_\tau\big)
-\eta\alpha\epsilon
\big(
\lambda^\ast_\tau-\lambda_{\mathrm{prior}}\big)
\right\|^2
+\eta^2\alpha^2
\left\|
f_\tau(\hat x_{\tau-1})-f_\tau(x^\ast_\tau)\right\|^2 \\
&+2\eta\alpha
\left((1-\eta\alpha\epsilon)
\big(
\hat\lambda_{\tau-1}-\lambda^\ast_\tau\big)
-\eta\alpha\epsilon
\big(
\lambda^\ast_\tau-\lambda_{\mathrm{prior}}\big)
\right)^T
\left(
f_\tau(\hat x_{\tau-1})-f_\tau(x^\ast_\tau)\right).
\end{aligned}
$$
Noting that $\|\hat x_{\tau-1}-x^\ast_\tau\|\leq \left\|\hat z_{\tau-1}-z^\ast_\tau\right\|_{\eta}\leq\delta$, by the definitions of $M_{\lambda}$ and $L_f(\delta)$, we get
\begin{equation}\label{eq:proof_approx_contraction:dual_diff}
\begin{aligned}
&\big\|
\hat\lambda_\tau-\lambda^\ast_\tau
\big\|^2 \\
\leq\ &
\left(
(1-\eta\alpha\epsilon)
\big\|
\hat\lambda_{\tau-1}-\lambda^\ast_\tau
\big\|
+\eta\alpha\epsilon M_{\lambda}
\right)^2+\eta^2\alpha^2 L_f^2(\delta)\left\|\hat x_{\tau-1}-x^\ast_\tau\right\|^2
\\
&+2\eta\alpha(1-\eta\alpha\epsilon)
\big(
\hat\lambda_{\tau-1}-\lambda^\ast_\tau
\big)^T
\left(
f_\tau(\hat x_{\tau-1})-f_\tau(x^\ast_\tau)
\right)
\\
&+2\eta^2\alpha^2\epsilon M_{\lambda} L_f(\delta)\left\|\hat x_{\tau-1}-x^\ast_\tau\right\|.
\end{aligned}
\end{equation}
By the convexity of the components of $f^c_{\tau}$ and noting that $\hat\lambda_{\tau-1}\in\mathbb{R}^m_+$, we have
$$
\hat\lambda_{\tau-1}^T
\left(f^{c}_\tau(\hat x_{\tau-1})+J_{f^c_\tau}(\hat x_{\tau-1})(x^\ast_\tau-\hat x_{\tau-1})-f^{c}_\tau(x^\ast_\tau)\right)
\leq 0.
$$
Next, recall the definition of $\overline{H}_{f^c_i}$ in~\cref{eq:def_Hfc}, and notice that by using the Fundamental Theorem of Calculus and integration by parts, we have
$$
u^T \left [ \overline{H}_{f^c_i}(u,\tau\Delta_T) \right ] u = 2 \left (f^c_{\tau, i} (x^\ast_\tau + u) - f^c_{\tau, i} (x^\ast_\tau) - u^T \nabla f^c_{\tau, i} (x^\ast_\tau) \right )
$$
for any $u \in \mathbb{R}^n$. Thus,
$$
\begin{aligned}
& {\lambda^\ast_\tau}^T
\left(
f^{c}_\tau(x^\ast_\tau)
+J_{f^c_\tau}(x^\ast_{\tau})(\hat x_{\tau-1}-x^\ast_\tau)
-f^{c}_\tau(\hat x_{\tau-1})\right) \\
 &
\hspace{.3cm}= -\frac{1}{2}(\hat x_{\tau-1}-x^\ast_\tau)^T
\left(\sum_{i=1}^m \lambda^\ast_{\tau,i}
B_{f^c_{\tau,i}}\right)(\hat x_{\tau-1}-x^\ast_\tau),
\end{aligned}
$$
where $B_{f^c_{\tau,i}}:=\overline{H}_{f^c_i}(\hat x_{\tau-1}-x^\ast_\tau,\tau\Delta_T)$. Therefore,
\begin{equation}\label{eq:proof_approx_contraction:temp2}
\begin{aligned}
&
\big(
\hat\lambda_{\tau-1}-\lambda^\ast_\tau
\big)^T
\left(
f_\tau(\hat x_{\tau-1})-f_\tau(x^\ast_\tau)
\right)
\\
&\quad
-(\hat x_{\tau-1}-x^\ast_\tau)^T
\Big(
J_{f^{nc}_\tau}(\hat x_{\tau-1})^T
\big(
\hat\lambda_{\tau-1}-\lambda^\ast_\tau\big) \\
&\qquad\qquad\qquad\qquad
+J_{f^c_\tau}(\hat x_{\tau-1})^T\hat\lambda_{\tau-1}
-J_{f^c_\tau}(x^\ast_{\tau})^T\lambda^\ast_\tau
\Big)
\\
=\ &
\big(
\hat\lambda_{\tau-1}-\lambda^\ast_\tau
\big)^T
\left(
f^{nc}_\tau(\hat x_{\tau-1})
+J_{f^{nc}_\tau}(\hat x_{\tau-1})(x^\ast_\tau-\hat x_{\tau-1})
-f^{nc}_\tau(x^\ast_\tau)\right)
\\
&\quad
+ \hat\lambda_{t-1}^T
\left(f^{c}_\tau(\hat x_{\tau-1})+J_{f^c_\tau}(\hat x_{\tau-1})(x^\ast_\tau-\hat x_{\tau-1})-f^{c}_\tau(x^\ast_\tau)\right)
\\
&\quad
+{\lambda^\ast_\tau}^T
\left(
f^{c}_\tau(x^\ast_\tau)
+J_{f^c_\tau}(x^\ast_{\tau})(\hat x_{\tau-1}-x^\ast_\tau)
-f^{c}_\tau(\hat x_{\tau-1})\right)
\\
\leq\ &
\frac{M_{nc}(\delta)}{2}\left\|\hat x_{\tau-1}-x^\ast_\tau\right\|^2
\big\|
\hat\lambda_{\tau-1}-\lambda^\ast_\tau
\big\|
\\
&-\frac{1}{2}(\hat x_{\tau-1}-x^\ast_\tau)^T
\left(\sum_{i=1}^m \lambda^\ast_{\tau,i}
B_{f^c_{\tau,i}}\right)(\hat x_{\tau-1}-x^\ast_\tau)
\\
\leq\ &
\frac{\sqrt{\eta}}{4}\delta M_{nc}(\delta)\left\|\hat z_{\tau-1}-z^\ast_\tau\right\|_{\eta}^2
-\frac{1}{2}(\hat x_{\tau-1}-x^\ast_\tau)^T
\left(\sum_{i=1}^m \lambda^\ast_{\tau,i}
B_{f^c_{\tau,i}}\right)(\hat x_{\tau-1}-x^\ast_\tau),
\end{aligned}
\end{equation}
where we used \cref{eq:curve_bound_f} and the definition of $M_{nc}(\delta)$ in the second step, and used
$$
\begin{aligned}
&\left\|\hat x_{\tau-1}-x^\ast_\tau\right\|
\big\|
\hat\lambda_{\tau-1}-\lambda^\ast_\tau
\big\| \\
\leq\ &
\frac{1}{2}
\left(
\sqrt{\eta}\left\|\hat x_{\tau-1}-x^\ast_\tau\right\|^2
+\frac{1}{\sqrt{\eta}}
\big\|
\hat\lambda_{\tau-1}-\lambda^\ast_\tau
\big\|^2
\right)
=
\frac{\sqrt{\eta}}{2}\left\|\hat z_{\tau-1}-z^\ast_\tau\right\|_{\eta}^2
\end{aligned}
$$
in the last step.

Now we take the sum of \cref{eq:proof_approx_contraction:primal_diff} and \cref{eq:proof_approx_contraction:dual_diff} and use \cref{eq:proof_approx_contraction:temp1} to bound $\left\|\hat z_\tau-z^\ast_\tau\right\|_{\eta}^2$ by
\begin{equation}\label{eq:proof_approx_contraction:z_diff_1}
\begin{aligned}
&\left\|\hat z_\tau-z^\ast_\tau\right\|_{\eta}^2
=
\|\hat x_\tau-x^\ast_\tau\|^2
+\eta^{-1} \big\|
\hat\lambda_\tau - \lambda^\ast_\tau
\big\|^2
\\
\leq\ &
(\hat x_{\tau-1}-x^\ast_\tau)^T\left(I-\alpha B_{\mathcal{L}^{nc}_\tau}\right)^2(\hat x_{\tau-1}-x^\ast_\tau)
\\
&+
\alpha^2\left(L_f(\delta)\big\|
\hat\lambda_{\tau-1}-\lambda^\ast_\tau
\big\|
+M_c(\delta)\left\|\hat x_{\tau-1}-x^\ast_\tau\right\|
\sup_{t\in[0,S]}\left\|\lambda^\ast(t)\right\|\right)^2
\\
&-2\alpha
(\hat x_{\tau-1}-x^\ast_\tau)^T
\left(I-\alpha B_{\mathcal{L}^{nc}_\tau}\right)
\big(
J_{f^{nc}_\tau}(\hat x_{\tau-1})^T
\big(
\hat\lambda_{\tau-1}-\lambda^\ast_\tau
\big)
\\
&\qquad\qquad\qquad\qquad\qquad\qquad\qquad
+J_{f^c_\tau}(\hat x_{\tau-1})^T\hat\lambda_{\tau-1}
-J_{f^c_\tau}(x^\ast_{\tau})^T\lambda^\ast_\tau
\big)
\\
&+
\eta^{-1}\left(
(1-\eta\alpha\epsilon)
\big\|
\hat\lambda_{\tau-1}-\lambda^\ast_\tau
\big\|
+\eta\alpha\epsilon M_{\lambda}
\right)^2
+\eta\alpha^2 L_f^2(\delta)\left\|\hat x_{\tau-1}-x^\ast_\tau\right\|^2
\\
&+2\alpha(1-\eta\alpha\epsilon)
\big(
\hat\lambda_{\tau-1}-\lambda^\ast_\tau
\big)^T
\left(
f_\tau(\hat x_{\tau-1})-f_\tau(x^\ast_\tau)\right)
\\
&+2\eta\alpha^2\epsilon M_{\lambda} L_f(\delta)\left\|\hat x_{\tau-1}-x^\ast_\tau\right\|.
\end{aligned}
\end{equation}
Notice that
\begin{equation}\label{eq:proof_approx_contraction:temp3}
\begin{aligned}
&\alpha^2\left(L_f(\delta)\big\|
\hat\lambda_{\tau-1}-\lambda^\ast_\tau
\big\|
+M_c(\delta)\left\|\hat x_{\tau-1}-x^\ast_\tau\right\|\sup_{t\in[0,S]}\left\|\lambda^\ast(t)\right\|\right)^2
\\
&\quad
+\eta\alpha^2 L_f^2(\delta)\left\|\hat x_{\tau-1}-x^\ast_\tau\right\|^2
\\
\leq\ &
\alpha^2
\left(\sqrt{\eta}L_f(\delta)
+M_c(\delta)\sup_{t\in[0,S]}\left\|\lambda^\ast(t)\right\|\right)^2
\left\|\hat z_{\tau-1}-z^\ast_\tau\right\|_{\eta}^2
\\
=\ &
\alpha^2 D^2(\delta,\eta)\left\|\hat z_{\tau-1}-z^\ast_\tau\right\|_{\eta}^2.
\end{aligned}
\end{equation}
Moreover,
$$
\begin{aligned}
&(1-\eta\alpha\epsilon)
\big(
\hat\lambda_{\tau-1}-\lambda^\ast_\tau 
\big)^T
\left(
f_\tau(\hat x_{\tau-1})-f_\tau(x^\ast_\tau)\right)
 \\
&\quad
-
(\hat x_{\tau-1}-x^\ast_\tau)^T
\left(I-\alpha B_{\mathcal{L}^{nc}_\tau}\right)
\big(
J_{f^{nc}_\tau}(\hat x_{\tau-1})^T
\big(
\hat\lambda_{\tau-1}-\lambda^\ast_\tau
\big)
 \\
&\qquad\qquad\qquad\qquad\qquad\qquad\qquad
+J_{f^c_\tau}(\hat x_{\tau-1})^T\hat\lambda_{\tau-1}
-J_{f^c_\tau}(x^\ast_{\tau})^T\lambda^\ast_\tau
\big)
 \\
=\ &
(1-\eta\alpha\epsilon)\Big[
\big(
\hat\lambda_{\tau-1}-\lambda^\ast_\tau
\big)^T
\left(
f_\tau(\hat x_{\tau-1})-f_\tau(x^\ast_\tau)\right)
 \\
&\quad\ \ 
-(\hat x_{\tau-1}-x^\ast_\tau)^T
\left(
J_{f^{nc}_\tau}(\hat x_{\tau-1})^T
\big(
\hat\lambda_{\tau-1}-\lambda^\ast_\tau
\big)
+J_{f^c_\tau}(\hat x_{\tau-1})^T\hat\lambda_{\tau-1}
-J_{f^c_\tau}(x^\ast_{\tau})^T\lambda^\ast_\tau\right)
\!\!\Big]
 \\
&\quad
-\alpha(\hat x_{\tau-1}-x^\ast_\tau)^T
\left(\eta\epsilon I-B_{\mathcal{L}^{nc}_\tau}\right)
\big(
J_{f^{nc}_\tau}(\hat x_{\tau-1})^T
\big(
\hat\lambda_{\tau-1}-\lambda^\ast_\tau
\big)
 \\
&\qquad\qquad\qquad\qquad\qquad\qquad\qquad
+J_{f^c_\tau}(\hat x_{\tau-1})^T\hat\lambda_{\tau-1}
-J_{f^c_\tau}(x^\ast_{\tau})^T\lambda^\ast_\tau
\big)
 \\
\leq\ &
(1-\eta\alpha\epsilon)
\!\left[\!
\frac{\sqrt{\eta}}{4}\delta M_{nc}(\delta)
\!\left\|\hat z_{\tau-1}-z^\ast_\tau\right\|_{\eta}^2
-\frac{1}{2}(\hat x_{\tau-1}-x^\ast_\tau)^T
\!\!\left(\sum_{i=1}^m \lambda^\ast_{\tau,i}
B_{f^c_{\tau,i}}\right)\!
(\hat x_{\tau-1}-x^\ast_\tau)
\!\right]
 \\
&\quad
+\alpha\left\|\eta\epsilon I-B_{\mathcal{L}^{nc}_\tau}\right\|
\left\|\hat x_{\tau-1}-x^\ast_\tau\right\|
 \\
&\quad\quad\quad\times
\left(L_f(\delta)\big\|
\hat\lambda_{\tau-1}-\lambda^\ast_\tau
\big\|
+M_c(\delta)\left\|\hat x_{\tau-1}-x^\ast_\tau\right\|\sup_{t\in[0,S]}\left\|\lambda^\ast(t)\right\|\right)
 \\
\leq\ &
(1-\eta\alpha\epsilon)
\!\left[\!
\frac{\sqrt{\eta}}{4}\delta M_{nc}(\delta)
\!\left\|\hat z_{\tau-1}-z^\ast_\tau\right\|_{\eta}^2
-\frac{1}{2}(\hat x_{\tau-1}-x^\ast_\tau)^T
\!\!\left(\sum_{i=1}^m \lambda^\ast_{\tau,i}
B_{f^c_{\tau,i}}\right)\!
(\hat x_{\tau-1}-x^\ast_\tau)
\!\right]
 \\
&\quad
+\alpha\left\|\eta\epsilon I-B_{\mathcal{L}^{nc}_\tau}\right\|
 D(\delta,\eta)
\left\|\hat z_{\tau-1}-z^\ast_\tau\right\|_{\eta}^2.
\end{aligned}
$$
Therefore by plugging \cref{eq:proof_approx_contraction:temp3} and the above inequality into 
\cref{eq:proof_approx_contraction:z_diff_1}, we get
$$
\begin{aligned}
& \left\|\hat z_\tau-z^\ast_\tau\right\|_{\eta}^2 \\
\leq\ &
(\hat x_{\tau-1}-x^\ast_\tau)^T
\left[\left(I-\alpha B_{\mathcal{L}^{nc}_\tau}\right)^2
-\alpha(1-\eta\alpha\epsilon)\sum_{i=1}^m \lambda^\ast_{\tau,i}
B_{f^c_{\tau,i}}\right](\hat x_{\tau-1}-x^\ast_\tau) \\
&
+(1-\eta\alpha\epsilon)^2 \eta^{-1}
\big\|
\hat \lambda_{\tau-1}-\lambda^\ast_\tau
\big\|^2 
+ \alpha^2 D^2(\delta,\eta)\left\|\hat z_{\tau-1}-z^\ast_\tau\right\|_{\eta}^2 \\
&
+\alpha(1-\eta\alpha\epsilon)\frac{\sqrt{\eta}}{2}\delta M_{nc}(\delta)\left\|\hat x_{\tau-1}-x^\ast_\tau\right\|^2
+
2\alpha\left\|\eta\epsilon I-B_{\mathcal{L}^{nc}_\tau}\right\|
D(\delta,\eta)
\left\|\hat z_{\tau-1}-z^\ast_\tau\right\|_{\eta}^2
\\
&+2\sqrt{\eta}\alpha\epsilon M_{\lambda}
\left(
\frac{1-\eta\alpha\epsilon}{\sqrt{\eta}}
\big\|
\hat \lambda_{\tau-1}-\lambda^\ast_\tau
\big\|
+\sqrt{\eta}\alpha L_f(\delta)
\left\|\hat x_{\tau-1}-x^\ast_\tau\right\|
\right) \\
&
+\eta\alpha^2\epsilon^2 M_{\lambda}^2.
\end{aligned}
$$
It's not hard to see that
$$
\begin{aligned}
&\left(
\frac{1-\eta\alpha\epsilon}{\sqrt{\eta}}
\big\|
\hat \lambda_{\tau-1}-\lambda^\ast_\tau
\big\|
+\sqrt{\eta}\alpha L_f(\delta)
\left\|\hat x_{\tau-1}-x^\ast_\tau\right\|
\right) \\
\leq\ &
\max\left\{1-\eta\alpha\epsilon,\,
\sqrt{\eta}\alpha L_f(\delta)\right\}
\left\|\hat z_{\tau-1}-z^\ast_\tau\right\|_{\eta},
\end{aligned}
$$
and by the definition of $\rho(\delta,\alpha,\eta,\epsilon)$ and $\kappa(\delta,\alpha,\eta,\epsilon)$, we get
$$
\begin{aligned}
&\left\|\hat z_\tau-z^\ast_\tau\right\|_{\eta}^2 \\
\leq\ &
\rho^2(\delta,\alpha,\eta,\epsilon)
\left\|\hat z_{\tau-1}-z^\ast_\tau\right\|_{\eta}^2
+\eta\alpha^2\epsilon^2 M_{\lambda}^2
\\
&+2\sqrt{\eta}\alpha\epsilon
M_{\lambda}\cdot
\max\left\{1-\eta\alpha\epsilon,
\sqrt{\eta}\alpha L_f(\delta)\right\}
\left\|\hat z_{\tau-1}-z^\ast_\tau\right\|_{\eta} \\
\leq\ &
\left(
\rho(\delta,\alpha,\eta,\epsilon)
\left\|\hat z_{\tau-1}-z^\ast_\tau\right\|_{\eta}
+\kappa(\delta,\alpha,\eta,\epsilon)
\sqrt{\eta}\alpha\epsilon M_{\lambda}
\right)^2,
\end{aligned}
$$
which is just \cref{eq:approx_contraction}.

Now let $\delta$, $\eta$ and $\epsilon$ be fixed. With the help of \cref{lemma:sup_continuous}, it can be shown that, if we temporarily allow $\alpha$ to take arbitrary values in $\mathbb{R}$, then the function $\alpha\mapsto \rho^{(\mathrm{P})}(\delta,\alpha,\eta,\epsilon)$ is a continuous function over $\alpha\in\mathbb{R}$, and so
$$
\lim_{\alpha\rightarrow 0^+}\rho^{(\mathrm{P})}(\delta,\alpha,\alpha\eta,\epsilon)= \rho^{(\mathrm{P})}(\delta,0,\eta,\epsilon)=1.
$$
Then by the definition of $\rho(\delta,\alpha,\eta,\epsilon)$, it's straightforward to get $\rho(\delta,\alpha,\eta,\epsilon)\rightarrow 1$ when $\alpha\rightarrow 0^+$, which further leads to
$$
\lim_{\alpha\rightarrow 0^+}\kappa(\delta,\alpha,\eta,\epsilon)
= 1.
$$
We also have
$$
\begin{aligned}
\frac{\max\left\{1-\eta\alpha\epsilon,\sqrt{\eta}\alpha
L_f(\delta)\right\}}
{\rho(\delta,\alpha,\eta,\epsilon)}
\leq\ &
\frac{1-\eta\alpha\epsilon+\alpha D(\delta,\eta)}
{\rho(\delta,\alpha,\eta,\epsilon)} \\
\leq\ &
\frac{\sqrt{2\left((1-\alpha\eta\epsilon)^2+\alpha^2 D^2(\delta,\eta)\right)}}{\rho(\delta,\alpha,\eta,\epsilon)}
\leq\sqrt{2}
\end{aligned}
$$
which implies that $\kappa(\delta,\alpha,\eta,\epsilon)\leq\sqrt{2}$.

\section{Proof of \texorpdfstring{\cref{lemma:sup_continuous}}{Lemma \ref{lemma:sup_continuous}}}
\label{sec:proof_sup_continuous}

\paragraph{Part 1} Let $a\in\mathbb{R}$ be arbitrary, and consider the set
$$
\begin{aligned}
g^{-1}[(a,+\infty)]&=\{y\in Y: g(y)>a\} \\
&=\{y\in Y:\exists x\in X\textrm{ such that }f(x,y)>a\} \\
&=\bigcup_{x\in X}\{y\in Y: f(x,y)>a\}.
\end{aligned}
$$
The continuity of $f$ implies that $\{y\in Y: f(x,y)>a\}$ is open for each $x\in X$, and so $g^{-1}[(a,+\infty)]$ is open. By the arbitrariness of $a\in\mathbb{R}$, we see that $g$ is lower semicontinuous.

Now let $y^0\in Y$ be arbitrary, and let $(y_n)_{n\in\mathbb{N}}$ be any sequence in $Y$ such that $y_n\rightarrow y^0$ and $\lim_{n\rightarrow\infty} g(y_n)$ exists. Since $X$ is compact and $f$ is continuous, we can see that for any $n$, there is some $x_n\in X$ such that $g(y_n)=f(x_n,y_n)$. By the compactness of $X$, we can find a subsequence $(x_{k_n})_{n\in\mathbb{N}}$ such that $x_{k_n}\rightarrow x^0$ for some $x^0\in X$ as $n\rightarrow\infty$. Then
$$
\lim_{n\rightarrow\infty} g(y_n)
=\lim_{n\rightarrow\infty} f(x_n,y_n)
=\lim_{n\rightarrow\infty} f(x_{k_n},y_{k_n})=f(x^0,y^0)
\leq g(y^0),
$$
where the third equality follows from the continuity of $f$, and the last inequality follows from the definition of $g$. By the arbitrariness of the sequence $(y_n)_{n\in\mathbb{N}}$, we get
$$
\limsup_{y\rightarrow y^0} g(y)\leq g(y^0),
$$
where $y^0\in Y$ is arbitrary. Now we can conclude that $f$ is upper semicontinuous, and thus continuous on $Y$.

\paragraph{Part 2} Now suppose $f:B_n(R)\times V\rightarrow\mathbb{R}$ is continuous. Define the auxiliary function $\tilde f:B_n(R)\times (0,R)\times V\rightarrow\mathbb{R}$ by
$$
\tilde f(u,r,v)=f\left(\mathcal{P}_{B_n(r)}(u), v\right).
$$
The function $(u,r)\mapsto \mathcal{P}_{B_n(r)}(u)$ is continuous (in fact Lipschitz), as for any $(u_1,r_1)$ and $(u_2,r_2)$ in $B_n(R)\times (0,R)$, we have
$$
\begin{aligned}
&\left\|\mathcal{P}_{B_n(r_1)}(u_1)
-\mathcal{P}_{B_n(r_2)}(u_2)\right\| \\
\leq\ &
\left\|\mathcal{P}_{B_n(r_1)}(u_1)-\mathcal{P}_{B_n(r_1)}(u_2)\right\|+
\left\|\mathcal{P}_{B_n(r_1)}(u_2)
-\mathcal{P}_{B_n(r_2)}(u_2)\right\| \\
\leq\ &
\|u_1-u_2\|+|r_1-r_2|.
\end{aligned}
$$
Therefore $\tilde f$ is also a continuous function. Moreover,
$$
g(r,v)=\sup_{u:\|u\|\leq r} f(u,v)
=\sup_{u\in B_n(R)} \tilde f(u,r,v).
$$
By the compactness of $B_n(R)$ and the first part of \cref{lemma:sup_continuous}, we conclude that $g$ is continuous.

\section{Proof of \texorpdfstring{\cref{lemma:gronwall_type}}{Lemma \ref{lemma:gronwall_type}}}
\label{sec:proof_gronwall_type}

The proof is directly based on the following lemma.
\begin{lemma}[{\cite{willett1965discrete}}]\label{lemma:gronwall_type_general}
Let $I$ be a closed interval with zero as left endpoint. Let $u(t)$ be a continuous nonnegative function that satisfies the integral inequality
$$
u(t)\leq u_0+\int_{0}^t w(s)u^p(s)\,ds
$$
where $w(t)$ is a continuous nonnegative function on $I$. For $0\leq p <1$ we have
$$
u(t)
\leq\left(u_0^{1-p}
+(1-p)\int_0^t w(s)\,ds
\right)^{\frac{1}{1-\beta}},
$$
and for $p=1$, we have
$$
u(t)\leq u_0\exp\int_0^t w(s)\,ds.
$$
\end{lemma}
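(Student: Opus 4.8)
The plan is to prove the final lemma (the generalized Gronwall--Bellman/Bihari inequality) by the standard majorant argument: replace $u$ by the explicit right-hand side of its own integral inequality, differentiate, integrate the resulting scalar differential inequality, and use an $\varepsilon$-shift to avoid dividing by zero. (Throughout, the exponent $\tfrac{1}{1-\beta}$ in the statement is understood to be $\tfrac{1}{1-p}$.)

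First I would fix $\varepsilon>0$ and set
\[
U_\varepsilon(t) := u_0 + \varepsilon + \int_0^t w(s)\,u^p(s)\,ds, \qquad t\in I.
\]
Since $w$ and $u$ are continuous and nonnegative, $U_\varepsilon$ is continuously differentiable and nondecreasing, with $U_\varepsilon(t)\ge u_0+\varepsilon>0$, $U_\varepsilon(0)=u_0+\varepsilon$, and $U_\varepsilon'(t)=w(t)\,u^p(t)$. Moreover $u(t)\le u_0+\int_0^t w u^p \le U_\varepsilon(t)$, so by monotonicity of $s\mapsto s^p$ on $[0,\infty)$ (recall $p\ge 0$) we have $u^p(t)\le U_\varepsilon^p(t)$, hence
\[
U_\varepsilon'(t)\le w(t)\,U_\varepsilon^p(t), \qquad t\in I.
\]

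For $p=1$, dividing by $U_\varepsilon(t)>0$ gives $\frac{d}{dt}\log U_\varepsilon(t)\le w(t)$; integrating from $0$ to $t$ and exponentiating yields $U_\varepsilon(t)\le (u_0+\varepsilon)\exp\int_0^t w$, and letting $\varepsilon\to 0^+$ with $u\le U_\varepsilon$ gives $u(t)\le u_0\exp\int_0^t w$. For $0\le p<1$, dividing by $U_\varepsilon^p(t)>0$ gives $\frac{d}{dt}\!\left(\frac{U_\varepsilon^{1-p}(t)}{1-p}\right)\le w(t)$; integrating from $0$ to $t$,
\[
\frac{U_\varepsilon^{1-p}(t)-(u_0+\varepsilon)^{1-p}}{1-p}\le \int_0^t w(s)\,ds,
\]
so $U_\varepsilon(t)\le \bigl((u_0+\varepsilon)^{1-p}+(1-p)\int_0^t w\bigr)^{1/(1-p)}$; using $u\le U_\varepsilon$ and letting $\varepsilon\to 0^+$ (the map $x\mapsto x^{1/(1-p)}$ being continuous) gives the stated bound.

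The only real subtlety — and the reason for working with the strictly positive $U_\varepsilon$ rather than directly with $U_0(t):=u_0+\int_0^t w u^p$ — is that $U_0$ may vanish identically when $u_0=0$, which would make the logarithmic or power substitution illegitimate; the $\varepsilon$-shift sidesteps this, and the inequality survives the limit $\varepsilon\to 0^+$ by continuity. Everything else (continuous differentiability of $U_\varepsilon$, the chain rule, and monotonicity of $t\mapsto\int_0^t w$) is routine because all the data are continuous on $I$, so I expect no further obstacle.
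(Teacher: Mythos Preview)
Your proof is correct and is the standard Bihari/Gronwall majorant argument: introduce the right-hand side as a $C^1$ function, bound its derivative by a separable ODE, and integrate; the $\varepsilon$-shift to keep the majorant strictly positive is exactly the right way to handle the possible degeneracy at $u_0=0$, and you also caught the typo $\tfrac{1}{1-\beta}\to\tfrac{1}{1-p}$.

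The paper, however, does not prove this lemma at all: it is simply quoted from \cite{willett1965discrete} and then applied (with $p=\tfrac12$) to derive \cref{lemma:gronwall_type}. So there is no ``paper's own proof'' to compare against; your argument supplies the missing justification and is entirely in line with the cited source.
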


Let us define $u(t)=e^{2\beta t}v^2(t)$. Then
$$
\begin{aligned}
\frac{d}{dt}u(t)
&=2\beta e^{2\beta t}v^2(t)
+e^{2\beta t}\frac{d}{dt}\left(v^2(t)\right) \\
&\leq
2\beta e^{2\beta t}v^2(t)
+2e^{2\beta t}(\alpha v(t)-\beta v^2(t)) \\
&=2\alpha e^{2\beta t}v(t)=2\alpha e^{\beta t}\sqrt{u(t)}
\end{aligned}
$$
for almost all $t\in [0,S]$. Therefore by \cref{lemma:gronwall_type_general},
$$
u(t)\leq\left( \sqrt{u(0)}+\alpha\int_0^t e^{\beta s}\,ds\right)^{2}
=\left( \sqrt{u(0)}+\frac{\alpha}{\beta}(e^{\beta t}-1)\right)^{2},
$$
and by the definition of $u(t)$, we get the desired result.

\section{Proof of \texorpdfstring{\cref{theorem:cont_time_limit}}{Theorem \ref{theorem:cont_time_limit}}}
\label{sec:proof_cont_time_limit}

The following lemma provides conditions for the closedness of the normal cone of a time-varying convex set.
\begin{lemma}\label{lemma:subdifferential_closed}
Suppose $\mathcal{C}:[0,S]\rightarrow2^{\mathbb{R}^p}$ is a $\kappa$-Lipschitz set-valued map, and $\mathcal{C}(t)$ is closed and convex for each $t\in[0,S]$. Then the set
$$
\left\{(z,t,y)\in\mathbb{R}^p\times[0,S]\times\mathbb{R}^p:
z\in\mathcal{C}(t),y\in N_{\mathcal{C}(t)}(z)
\right\}
$$
is closed; in other words, the set-valued map $(z,t)\mapsto N_{\mathcal{C}(t)}(z)$ is closed.
\end{lemma}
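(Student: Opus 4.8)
The plan is to establish closedness of the graph of $(z,t)\mapsto N_{\mathcal{C}(t)}(z)$ by a direct sequential argument. Take an arbitrary sequence $(z_k,t_k,y_k)$ lying in the set in question, i.e.\ $z_k\in\mathcal{C}(t_k)$ and $y_k\in N_{\mathcal{C}(t_k)}(z_k)$, and suppose $z_k\to z$, $t_k\to t$, $y_k\to y$. The goal is to show $z\in\mathcal{C}(t)$ and $y\in N_{\mathcal{C}(t)}(z)$. (Note that the Lipschitz hypothesis forces $d_H(\mathcal{C}(t_1),\mathcal{C}(t_2))<\infty$ for all $t_1,t_2$, which in particular requires every $\mathcal{C}(t)$ to be nonempty; together with closedness and convexity this makes projections onto $\mathcal{C}(t)$ well defined.)

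The first step handles the easy inclusion $z\in\mathcal{C}(t)$. Since $z_k\in\mathcal{C}(t_k)$, the $\kappa$-Lipschitz property and the standard bound $\mathrm{dist}(z_k,\mathcal{C}(t))\le d_H(\mathcal{C}(t_k),\mathcal{C}(t))$ give $\mathrm{dist}(z_k,\mathcal{C}(t))\le\kappa|t_k-t|$. Hence
$$
\mathrm{dist}(z,\mathcal{C}(t))\le \|z-z_k\| + \mathrm{dist}(z_k,\mathcal{C}(t))\le \|z-z_k\|+\kappa|t_k-t|\xrightarrow[k\to\infty]{}0,
$$
and closedness of $\mathcal{C}(t)$ yields $z\in\mathcal{C}(t)$.

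The second step proves the normal-cone inclusion, and it is where the only genuine idea is needed: to transport an arbitrary test point $w\in\mathcal{C}(t)$ into each set $\mathcal{C}(t_k)$. Fix $w\in\mathcal{C}(t)$ and set $w_k:=\mathcal{P}_{\mathcal{C}(t_k)}(w)$, which is well defined because $\mathcal{C}(t_k)$ is nonempty, closed, and convex. Using $\mathrm{dist}(w,\mathcal{C}(t_k))\le d_H(\mathcal{C}(t),\mathcal{C}(t_k))\le\kappa|t-t_k|$, we get $\|w_k-w\|=\mathrm{dist}(w,\mathcal{C}(t_k))\to 0$, so $w_k\to w$. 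Since $w_k\in\mathcal{C}(t_k)$ and $y_k\in N_{\mathcal{C}(t_k)}(z_k)$, the definition of the normal cone gives $y_k^T(w_k-z_k)\le 0$ for every $k$; passing to the limit (using $y_k\to y$, $w_k\to w$, $z_k\to z$ and continuity of the inner product) yields $y^T(w-z)\le 0$. As $w\in\mathcal{C}(t)$ was arbitrary, this is exactly $y\in N_{\mathcal{C}(t)}(z)$, which completes the proof.

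The main obstacle is modest: it is the construction of the approximating sequence $w_k\in\mathcal{C}(t_k)$ with $w_k\to w$, which is the one place that simultaneously uses the $\kappa$-Lipschitz hypothesis (to control $\mathrm{dist}(w,\mathcal{C}(t_k))$) and the closedness/convexity of the sets (so that the projection exists and realizes this distance). Once that sequence is in hand, everything else is continuity of finite-dimensional inner products and closedness of $\mathcal{C}(t)$.
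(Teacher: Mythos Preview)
Your proof is correct and follows essentially the same approach as the paper: a sequential closedness argument, first showing $z\in\mathcal{C}(t)$ via the Hausdorff--Lipschitz bound, then projecting an arbitrary $w\in\mathcal{C}(t)$ onto $\mathcal{C}(t_k)$ to obtain $w_k\to w$ and passing the normal-cone inequality to the limit. The only cosmetic difference is that the paper bounds $y_k^T(w-z_k)\le \kappa\|y_k\|\,|t-t_k|$ before letting $k\to\infty$, whereas you pass $y_k^T(w_k-z_k)\le 0$ directly to the limit; both are equivalent.
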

\begin{proof}[Proof of \cref{lemma:subdifferential_closed}]
Denote
$$
A=\left\{(z,t,y)\in\mathbb{R}^p\times[0,S]\times\mathbb{R}^p:
z\in\mathcal{C}(t),y\in N_{\mathcal{C}(t)}(z)
\right\}.$$
Let $(z_k,t_k,y_k),\,k\in\mathbb{N}$ be a sequence in $A$ that converges to some $(z,t,y)$. We then have
$$
\begin{aligned}
\inf_{u\in\mathcal{C}(t)}\|z-u\|
&\leq
\|z-z_k\|
+\inf_{u\in\mathcal{C}(t)}\left\|
z_k-u\right\|
\leq
\|z-z_k\|
+d_H\left(\mathcal{C}(t),\mathcal{C}(t_k)\right) \\
&\leq \|z-z_k\|+\kappa|t-t_k|
\end{aligned}
$$
where $k\in\mathbb{N}$ is arbitrary. By letting $k\rightarrow\infty$ we get $\inf_{u\in\mathcal{C}(t)}\|z-u\|=0$, and since $\mathcal{C}(t)$ is closed, we have $z\in\mathcal{C}(t)$.

Now let $w\in\mathcal{C}(t)$ be arbitrary, and denote $w_k=\mathcal{P}_{\mathcal{C}(t_k)}(w)$.
We have $\|w_k-w\|\leq\kappa|t-t_k|$ as $\mathcal{C}$ is $\kappa$-Lipschitz. Then
$$
y_k^T
(w-z_k)
\leq
y_k^T
(w-w_k)
+y_k^T
(w_k-z_k)
\leq
y_k^T
(w-w_k)
\leq \kappa\|y_k\||t-t_k|,
$$
where we used $y_k^T(w_k-z_k)\leq 0$ since $y_k\in N_{\mathcal{C}(t_k)}(z_k)$ and $w_k\in\mathcal{C}(t_k)$. By letting $k\rightarrow\infty$, we get $y^T(w-z)\leq 0$. By the arbitrariness of $w\in\mathcal{C}(t)$, we see that $y\in N_{\mathcal{C}(t)}(z)$, and therefore $(z,t,y)\in A$.
\end{proof}

The following lemma constitutes the core step of the proof of \cref{theorem:cont_time_limit}, which has been studied in the literature on perturbed sweeping process \cite{castaing1993evolution,adly2014convex}. We provide its proof here for completeness.

\begin{lemma}\label{lemma:discrete_to_cont}
Suppose $\Phi:\mathbb{R}^p\times[0,S]\rightarrow\mathbb{R}^p$ and $\mathcal{C}:[0,S]\rightarrow2^{\mathbb{R}^p}$, and that
\begin{enumerate}[itemindent=13.5pt,leftmargin=0pt]
\item $\mathcal{C}$ is $\kappa_1$-Lipschitz, and for each $t\in[0,S]$, $\mathcal{C}(t)$ is closed and convex,
\item $\Phi$ is continuous when restricted to the set $\bigcup_{t\in[0,S]}\mathcal{C}(t)\times[0,S]$, and there exists some $\kappa_2>0$ such that
$$
\|\Phi(z,t)\|\leq \kappa_2(1+\|z\|),\qquad
\forall (z,t)\in\bigcup_{t\in[0,S]}\mathcal{C}(t)\times[0,S].
$$
\end{enumerate}
Let $\hat z_0 \in \mathcal{C}(0)$ be arbitrary, and for each $K\in\mathbb{N}$, Define $\hat z^{(K)}_\tau,\,\tau\in\{0,1,2,\ldots,K\}$ by
$$
\begin{aligned}
\hat z^{(K)}_0
&= \hat z_0, \\
\hat z^{(K)}_\tau
&= \mathcal{P}_{\mathcal{C}(\tau\Delta_K)}
\left[ \hat z^{(K)}_{\tau-1} + \Delta_K \Phi\Big(\hat z^{(K)}_{\tau-1},\tau\Delta_K\Big)
\right],\\
\end{aligned}
$$
where $\Delta_K:=S/K$, and for $t\in[0,S]$, define
\begin{equation}\label{eq:discrete_path_lemma}
\hat z^{(K)}(t)
=\frac{\tau\Delta_K -t}{\Delta_K}\hat z^{(K)}_{\tau-1} + \frac{t-(\tau-1)\Delta_K}{\Delta_K}\hat z^{(K)}_{\tau}
\end{equation}
if $t\in[(\tau-1)\Delta_K, \tau\Delta_K]$. Then, if we keep $S$ constant and let $K\rightarrow\infty$, the sequence $\big(\hat z^{(K)}\big)_{K\in\mathbb{N}}$ defined in \cref{eq:discrete_path_lemma} has a convergent subsequence, and any convergent subsequence converges uniformly to a Lipschitz continuous $\hat z$ that satisfies
\begin{equation}\label{eq:diff_incl_lemma}
\begin{aligned}
\hat z(0)&=\hat z_0, \\
-\frac{d}{dt}\hat z(t)+\Phi(\hat z(t),t)
&\in N_{\mathcal{C}(t)}(\hat z(t)),\quad \forall t\in[0,S]\,a.e.
\end{aligned}
\end{equation}
\end{lemma}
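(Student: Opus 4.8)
The plan is to follow the classical compactness-and-passage-to-the-limit scheme for catching algorithms of perturbed sweeping processes, in three stages: (i) derive uniform a priori bounds on the iterates $\hat z^{(K)}_\tau$ and on $\Phi$ evaluated along them; (ii) obtain an equi-Lipschitz estimate on the interpolants $\hat z^{(K)}$, so that Arzelà--Ascoli gives a uniformly convergent subsequence with Lipschitz limit $\hat z$, while weak-$L^2$ compactness gives $\dot{\hat z}^{(K)}\rightharpoonup\dot{\hat z}$; and (iii) pass to the limit in the variational characterization of each projection step to recover the differential inclusion.

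For (i), fix $q_0\in\mathcal{C}(0)$ and set $q_\tau^{(K)}:=\mathcal{P}_{\mathcal{C}(\tau\Delta_K)}(q_{\tau-1}^{(K)})$; since $\mathcal{C}$ is $\kappa_1$-Lipschitz one has $\|q_\tau^{(K)}-q_{\tau-1}^{(K)}\|\le\kappa_1\Delta_K$, hence $\|q_\tau^{(K)}\|\le R:=\|q_0\|+\kappa_1 S$ for all $\tau,K$. Nonexpansiveness of $\mathcal{P}_{\mathcal{C}(\tau\Delta_K)}$ and the linear growth of $\Phi$ give $\|\hat z^{(K)}_\tau-q_\tau^{(K)}\|\le(1+\kappa_2\Delta_K)\|\hat z^{(K)}_{\tau-1}-q_{\tau-1}^{(K)}\|+\kappa_2\Delta_K(1+R)$, and a discrete Grönwall estimate bounds $\|\hat z^{(K)}_\tau\|$ by a constant $M$ independent of $\tau$ and $K$; consequently $\|\Phi(\hat z^{(K)}_{\tau-1},\tau\Delta_K)\|\le\kappa_2(1+M)=:L$. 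For (ii): since $\hat z^{(K)}_{\tau-1}\in\mathcal{C}((\tau-1)\Delta_K)$ there is $w\in\mathcal{C}(\tau\Delta_K)$ with $\|w-\hat z^{(K)}_{\tau-1}\|\le\kappa_1\Delta_K$, and nonexpansiveness gives $\|\hat z^{(K)}_\tau-w\|\le L\Delta_K$, so $\|\hat z^{(K)}_\tau-\hat z^{(K)}_{\tau-1}\|\le(\kappa_1+L)\Delta_K$; thus each $\hat z^{(K)}$ is $(\kappa_1+L)$-Lipschitz and uniformly bounded, Arzelà--Ascoli gives a subsequence (not relabeled) converging uniformly to a $(\kappa_1+L)$-Lipschitz function $\hat z$, and, passing to a further subsequence, the bounded family $\dot{\hat z}^{(K)}$ converges weakly in $L^2([0,S];\mathbb{R}^p)$, necessarily to $\dot{\hat z}$. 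Note that $\bigcup_{t\in[0,S]}\mathcal{C}(t)$ is closed (because $\mathcal{C}$ has closed values and is Hausdorff-continuous on the compact interval), so all evaluation points of $\Phi$ occurring below lie in the compact set $K_0:=\big(\bigcup_{t}\mathcal{C}(t)\cap\overline{B_p(M)}\big)\times[0,S]$, on which $\Phi$ is uniformly continuous.

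For (iii), write $t^+_K$ and $t^-_K$ for the right and left endpoints of the grid cell containing $t$, and set $r^{(K)}(t):=\hat z^{(K)}(t^+_K)$, $\phi^{(K)}(t):=\Phi(\hat z^{(K)}(t^-_K),t^+_K)$; uniform convergence of $\hat z^{(K)}$, the equi-Lipschitz bound, and uniform continuity of $\Phi$ on $K_0$ give $r^{(K)}\to\hat z$ and $\phi^{(K)}\to\Phi(\hat z(\cdot),\cdot)$ uniformly. The defining projection at step $\tau$ satisfies $\langle\hat z^{(K)}_{\tau-1}+\Delta_K\Phi(\hat z^{(K)}_{\tau-1},\tau\Delta_K)-\hat z^{(K)}_\tau,\,w-\hat z^{(K)}_\tau\rangle\le0$ for all $w\in\mathcal{C}(\tau\Delta_K)$; since $\hat z^{(K)}_{\tau-1}-\hat z^{(K)}_\tau=-\Delta_K\dot{\hat z}^{(K)}(t)$ on the open cell, dividing by $\Delta_K>0$ yields, for a.e. $t$ and all $w\in\mathcal{C}(t^+_K)$, the inequality $\langle-\dot{\hat z}^{(K)}(t)+\phi^{(K)}(t),\,w-r^{(K)}(t)\rangle\le0$. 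Now fix $\bar w\in D$, a countable dense subset of $\mathbb{R}^p$, take $w=\mathcal{P}_{\mathcal{C}(t^+_K)}(\bar w)$ (feasible, and $\to\mathcal{P}_{\mathcal{C}(t)}(\bar w)$ uniformly in $t$ as $\Delta_K\to0$ by the Hausdorff-Lipschitz property of $\mathcal{C}$), multiply by an arbitrary $\psi\in C([0,S])$ with $\psi\ge0$, and integrate over $[0,S]$; letting $K\to\infty$ and using weak-strong convergence (the only weakly convergent factor $\dot{\hat z}^{(K)}$ is paired with a strongly $L^2$-convergent factor, the rest converging uniformly and boundedly) gives $\int_0^S\psi(t)\langle-\dot{\hat z}(t)+\Phi(\hat z(t),t),\,\mathcal{P}_{\mathcal{C}(t)}(\bar w)-\hat z(t)\rangle\,dt\le0$. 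Since $\psi\ge0$ is arbitrary, the integrand is $\le0$ a.e.; intersecting over $\bar w\in D$ the exceptional null sets and then using density of $D$ together with continuity of $\mathcal{P}_{\mathcal{C}(t)}$ for fixed $t$ upgrades this to $\langle-\dot{\hat z}(t)+\Phi(\hat z(t),t),\,w-\hat z(t)\rangle\le0$ for a.e. $t$ and every $w\in\mathcal{C}(t)$. Finally $\hat z(t)\in\mathcal{C}(t)$ because $r^{(K)}(t)\in\mathcal{C}(t^+_K)$, $r^{(K)}(t)\to\hat z(t)$ and $|t^+_K-t|\to0$ (this is the feasibility half of the argument in \cref{lemma:subdifferential_closed}), so $-\dot{\hat z}(t)+\Phi(\hat z(t),t)\in N_{\mathcal{C}(t)}(\hat z(t))$ a.e.; and $\hat z(0)=\lim_K\hat z^{(K)}(0)=\hat z_0$. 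Since the conclusion depends only on the uniform limit, every uniformly convergent subsequence has this property.

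The main obstacle is the passage to the limit in step (iii): because the constraint set moves with $t$, the per-step inclusions live in distinct cones $N_{\mathcal{C}(t^+_K)}(\cdot)$, so neither a direct graph-closedness argument (there is no strong convergence of $\dot{\hat z}^{(K)}$) nor a Mazur-type convex-combination argument closes the inclusion. The device that makes it work is to test the per-step variational inequality against $\mathcal{P}_{\mathcal{C}(t^+_K)}(\bar w)$ — a feasible point that tracks a fixed target $\bar w$ — and to use the Hausdorff-Lipschitz continuity of $\mathcal{C}$ so that this feasible test point converges, while keeping the only weakly convergent quantity always paired against a strongly convergent factor.
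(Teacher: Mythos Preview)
Your proof is correct and complete. The a~priori bounds, the equi-Lipschitz estimate, and the compactness step match the paper's argument essentially line for line (the paper bounds $\|\hat z^{(K)}_\tau\|$ directly rather than via your auxiliary reference trajectory $q_\tau^{(K)}$, and uses weak* compactness in $L^\infty$ via Banach--Alaoglu where you use weak $L^2$, but these are interchangeable here).

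The genuine difference is in the passage to the limit. The paper does not argue through the variational inequality: instead it observes that the truncated map $(z,t)\mapsto N_{\mathcal{C}(t)}(z)\cap B_p(r)$ is upper semicontinuous with compact convex values (by \cref{lemma:subdifferential_closed} and a standard intersection argument), rewrites the per-step inclusion as $-\dot{\hat z}^{(K)}(t)+\Phi(\hat z^{(K)}(\delta^{(K)}(t)),\theta^{(K)}(t))\in F(\hat z^{(K)}(\theta^{(K)}(t)),\theta^{(K)}(t))$, and then invokes a black-box convergence theorem for differential inclusions (\cite[Theorem~7.2.2]{aubin2009set}) to close the limit in one stroke. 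Your route---testing the projection inequality against the tracking point $\mathcal{P}_{\mathcal{C}(t^+_K)}(\bar w)$, integrating against nonnegative $\psi$, and exploiting the weak--strong pairing---is more elementary and fully self-contained, at the cost of the bookkeeping with the countable dense set and the a.e.\ upgrade. The paper's approach is shorter once the cited theorem is granted; yours makes the mechanism transparent and avoids the external reference. Both rely on the Hausdorff-Lipschitz continuity of $\mathcal{C}$ in exactly the same way (to move feasible test points between nearby times), which is the essential obstacle you correctly identified.
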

\begin{proof}[Proof of \cref{lemma:discrete_to_cont}]
Let $A=\bigcup_{t\in[0,S]}\mathcal{C}$. For each $\tau\geq 1$, let
$$
\begin{aligned}
u_\tau^{(K)} &= \mathcal{P}_{\mathcal{C}(\tau\Delta_K)}\left[\hat z_{\tau-1}^{(K)}\right].
\end{aligned}
$$
Since $\left\|u_\tau^{(K)}-\hat z_{\tau-1}^{(K)}\right\|\leq\kappa_1\Delta_K$ by the $\kappa_1$-Lipschitz continuity of $\mathcal{C}$, we get
\begin{equation}\label{eq:first_order:sweeping_lemma_temp}
\begin{aligned}
\left\|\hat z^{(K)}_\tau-\hat z^{(K)}_{\tau-1}\right\|
\leq\ &
\left\|\mathcal{P}_{\mathcal{C}(\tau\Delta_K)}
\left[\hat z^{(K)}_{\tau-1}
+\Delta_K\Phi\Big(\hat z^{(K)}_{\tau-1},\tau\Delta_K\Big)\right]
-\mathcal{P}_{\mathcal{C}(\tau\Delta_K)}
\left[\hat z^{(K)}_{\tau-1}\right]\right\|
\\
& +
\left\|
\mathcal{P}_{\mathcal{C}(\tau\Delta_K)}
\left[\hat z^{(K)}_{\tau-1}\right]
-\hat z^{(K)}_{\tau-1}
\right\|
\\
\leq\ &
\Delta_K\left\|\Phi\Big(\hat z^{(K)}_{\tau-1},\tau\Delta_K\Big)\right\|
+\left\|u^{(K)}_\tau
-\hat z_{\tau-1}^{(K)}\right\| 
\\
\leq\ &
\Delta_K(\kappa_1+\kappa_2)
+\Delta_K\kappa_2\left\|\hat z^{(K)}_{\tau-1}\right\|,
\end{aligned}
\end{equation}
and so
$$
\left\|\hat z^{(K)}_\tau\right\|
\leq\Delta_K(\kappa_1+\kappa_2)
+(1+\Delta_K\kappa_2)\left\|\hat z^{(K)}_{\tau-1}\right\|
$$
for any $\tau=1,\ldots,K$. By induction we can see that
$$
\left\|\hat z^{(K)}_\tau\right\|
\leq \left(1+\Delta_K\kappa_2\right)^\tau
\left(\left\|\hat z_0\right\|
+\frac{\kappa_1+\kappa_2}{\kappa_2}\right)
-\frac{\kappa_1+\kappa_2}{\kappa_2}
$$
holds for all $\tau=0,\ldots,K$, and since $\Delta_K=S/K$, we get
$$
\begin{aligned}
\left\|\hat z^{(K)}_\tau\right\|
&\leq \left(1+\frac{S\kappa_2}{K}\right)^K
\left(\left\|\hat z_0\right\|
+\frac{(\kappa_1+\kappa_2)}{\kappa_2}\right)
-\frac{\kappa_1+\kappa_2}{\kappa_2} \\
&\leq e^{S\kappa_2}\left(\left\|\hat z_0\right\|
+\frac{(\kappa_1+\kappa_2)}{\kappa_2}\right)
-\frac{\kappa_1+\kappa_2}{\kappa_2}
=: \kappa_3
\end{aligned}
$$
for any $\tau=0,1,\ldots,K$. By plugging it back to \cref{eq:first_order:sweeping_lemma_temp}, we have
$$
\left\|\hat z^{(K)}_\tau-\hat z^{(K)}_{\tau-1}\right\|
\leq \Delta_K(\kappa_1+\kappa_2+\kappa_2\kappa_3),
$$
and consequently
$$
\left\|\frac{d}{dt}\hat z^{(K)}(t)\right\|
=\Delta_K^{-1}\left\|\hat z^{(K)}_{\lceil t/\Delta_K\rceil}-\hat z^{(K)}_{\lfloor t/\Delta_K\rfloor}\right\|
\leq \kappa_1+\kappa_2+\kappa_2\kappa_3
=:\tilde\ell 
$$
for almost every $t\in[0,S]$.

Let $D\hat z_i^{(K)}\in L^\infty([0,S])$ denote the weak derivative of the $i$'th entry of $\hat z^{(K)}$ for each $i=1,\ldots,p$. Then the sequence $\big(D\hat z_i^{(K)}\big)_{K\in\mathbb{N}}$ lies in the ball
$$
B=\left\{f\in L^\infty([0,S]):\esssup_{t\in[0,S]}|f(t)| \leq \tilde{\ell}\right\}.
$$
The Banach--Alaoglu theorem \cite{rudin1991functional} indicates that $B$ is weak* sequentially compact, and so $\big(D\hat z_i^{(K)}\big)_{K\in\mathbb{N}}$ has a convergent subsequence with respect to the weak* topology. We extract an arbitrary convergent subsequence and still denote it by $\big(D\hat z_i^{(K)}\big)_{K\in\mathbb{N}}$. Then $D\hat z^{(K)}_i\overset{w^\ast}{\rightarrow} q_i$ for some $q_i\in B$, or in other words,
$$
\int_0^T u(t)q_i(t)\,dt=\lim_{K\rightarrow\infty}\int_0^T
u(t)D\hat z_i^{(K)}(t)\,dt
$$
for all $u\in L^1([0,S])$. Consequently $\hat z^{(K)}$ converges uniformly to $\hat z$ given by
$$
\hat z(t)=\hat z_0+\int_0^t
q(s)\,ds
$$
where $q:[0,S]\rightarrow \mathbb{R}^{p}$ is the vector-valued function with entries $g_1,\ldots,g_{p}$.

Next we prove that any convergent subsequence of $\big(\hat z^{(K)}\big)_{K\in\mathbb{N}}$ converges to a limit that satisfies the differential inclusions \cref{eq:diff_incl_lemma}. We still use $\hat z$ to denote the limit of an arbitrary convergent subsequence of $\big(\hat z^{(K)}\big)_{K\in\mathbb{N}}$, and without loss of generality we assume $\hat z^{(K)}\rightarrow\hat z$ by extracting the subsequence. Since $\big(\hat z^{(K)}\big)_{K\in\mathbb{N}}$ is equi-Lipschitz, the convergence $\hat z^{(K)}\rightarrow\hat z$ is uniform, $\hat z$ is $\tilde{\ell}$-Lipschitz, and $D\hat z^{(K)}\overset{w^\ast}{\rightarrow} D\hat z$. Define
$$
\delta^{(K)}(t)=\left\lfloor\frac{t}{\Delta_K}\right\rfloor\Delta_K,\qquad
\theta^{(K)}(t)=\left\lceil\frac{t}{\Delta_K}\right\rceil\Delta_K.
$$
Then we have for almost all $t\in[0,S]$,
$$
\begin{aligned}
&-\frac{d}{dt}\hat z^{(K)}(t)+\Phi\Big(\hat z^{(K)}\big(\delta^{(K)}(t)\big),\theta^{(K)}(t)\Big) \\
=\ &
\frac{1}{\Delta_K}\left(
\hat z^{(K)}\left(\delta^{(K)}(t)\right)
+\Delta_K\Phi\Big(\hat z^{(K)}\big(\delta^{(K)}(t)\big),\theta^{(K)}(t)\Big)-\hat z^{(K)}\big(\theta^{(K)}(t)\big)
\right) \\
\in\ & N_{\mathcal{C}\left(\theta^{(K)}(t)\right)} \big(\hat z^{(K)}\big(\theta^{(K)}(t)\big)\big).
\end{aligned}
$$
In addition, for almost all $t\in[0,S]$,
$$
\left\|-\frac{d}{dt}\hat z^{(K)}(t)+\Phi\Big(\hat z^{(K)}\big(\delta^{(K)}(t)\big),\theta^{(K)}(t)\Big)\right\|
\leq \tilde{\ell}+\kappa_2(1+\kappa_3),
$$
and so
$$
\begin{aligned}
&-\frac{d}{dt}\hat z^{(K)}(t)+\Phi\Big(\hat z^{(K)}\big(\delta^{(K)}(t)\big),\theta^{(K)}(t)\Big) \\
\in\ &
N_{\mathcal{C}\big(\theta^{(K)}(t)\big)} \big(\hat z^{(K)}\big(\theta^{(K)}(t)\big)\big)
\cap B_p\big(\tilde{\ell}+\kappa_2(1+\kappa_3)\big).
\end{aligned}
$$
We denote
$$
F(z,t)=N_{\mathcal{C}\big(\theta^{(K)}(t)\big)} \big(\hat z^{(K)}\big(\theta^{(K)}(t)\big)\big)
\cap B_p\big(\tilde{\ell}+\kappa_2(1+\kappa_3)\big)
\qquad (z,t)\in\mathbb{R}^p\times[0,S].
$$
By \cref{lemma:subdifferential_closed} and \cite[Proposition 1.4.9]{aubin2009set}, the set-valued map $F$ is upper semicontinuous. Noticing that
$$
\lim_{K\rightarrow\infty}\delta^{(K)}(t)=\lim_{K\rightarrow\infty}\theta^{(K)}(t)=t,
$$
$$
\lim_{K\rightarrow\infty}\Phi\Big(\hat z^{(K)}(\delta^{(K)}(t)),\theta^{(K)}(t)\Big)=\Phi(\hat z(t),t),
$$
by \cite[Theorem 7.2.2]{aubin2009set}, we can conclude that, for almost all $t\in[0,S]$,
$$
-\frac{d}{dt}\hat z(t)+\Phi(\hat z(t),t)\in F(\hat z(t),t),
$$
which implies \cref{eq:diff_incl_lemma}.
\end{proof}

Now we are ready to finish the proof of \cref{theorem:cont_time_limit}. For each $t\in[0,S]$ and $z=(x,\lambda)\in\mathbb{R}^n\times\mathbb{R}^m$, we define
\begin{equation}\label{eq:def_Phi_map}
\Phi(z,t)
:=\beta\begin{bmatrix}
-\nabla_x c(x,t)-J_{f,x}(x,t)^T\lambda\\
\eta\left(f^{in}(x,t)-\epsilon(\lambda-\lambda_{\mathrm{prior}})\right)
\end{bmatrix}
\end{equation}
and
$$
\mathcal{C}(t)
:=\mathcal{X}(t)\times\mathbb{R}^m_+.
$$
The iterations \cref{eq:RegPPD_cont_setting} can then be formulated as
$$
\begin{aligned}
\hat z^{(T)}_0 &= \hat z_0, \\
\hat z^{(T)}_\tau &= \mathcal{P}_{\mathcal{C}(\tau\Delta_T)}
\left[\hat z^{(T)}_{\tau-1}+\Delta_T\Phi\Big(\hat z^{(T)}_{\tau-1},\tau\Delta_T\Big)\right].
\end{aligned}
$$
We check the conditions of \cref{lemma:discrete_to_cont} as follows:
\begin{enumerate}[itemindent=13.5pt,leftmargin=0pt]
\item $\mathcal{C}$ is $\kappa_1$-Lipschtz as $\mathcal{X}$ is $\kappa_1$-Lipschtz.
\item $\Phi$ is obviously continuous on $\bigcup_{t\in[0,S]}\mathcal{X}(t)\times[0,S]$. Moreover,
$$
\begin{aligned}
\left\|\Phi(z,t)\right\|
\leq\ &
\beta
\left(\|\nabla_x c(x,t)\|
+\left\|
J_{f,x}(x,t)\right\|
\left\|
\lambda
\right\|\right) \\
&+\eta\beta
\left\|
f(x,t)\right\|+
\eta\beta\epsilon\left\|
\lambda\right\|
+\eta\beta\epsilon\left\|
\lambda_{\mathrm{prior}}\right\|.
\end{aligned}
$$
Let $x_{\mathrm{aux}}\in\bigcup_{t\in[0,S]}\mathcal{X}(t)$ be arbitrary, and 
$$
\begin{aligned}
\kappa_3 &:= \sup\left\{
\left\|
J_{f,x}(x,t)\right\|:(x,t)\in\bigcup_{t\in[0,S]}\mathcal{X}(t)\times[0,S]\right\}, \\
\kappa_4 &:=
\sup_{t\in[0,S]}\left\|
f(x_{\mathrm{aux}},t)\right\|,
\end{aligned}
$$
both of which are finite. Then
$$
\left\|
f(x,t)\right\|
\leq \kappa_4+\kappa_3(\|x\|+\|x_{\mathrm{aux}}\|).
$$
By \cref{eq:cont_time_limit:nabla_c_linear_growth} and noticing that $\|x\|\leq\|z\|$ and $\|\lambda\|\leq\|z\|$, we get
$$
\begin{aligned}
\|\Phi(z,t)\|
\leq\ &\beta
(\kappa_2(1+\|z\|)
+\kappa_3\|z\|) \\
&+\eta\beta\left(\kappa_4+\kappa_3(\|z\|+\|x_{\mathrm{aux}}\|)\right)+
\eta\beta\epsilon\|z\|
+\eta\beta\epsilon\left\|\lambda_{\mathrm{prior}}\right\| \\
\leq\ & \kappa_5(1+\|z\|),
\end{aligned}
$$
where $\kappa_5$ satisfies
$$
\begin{aligned}
\kappa_5
&\geq\beta\kappa_2
+\eta\beta(\kappa_4+\kappa_3\|x_{\mathrm{aux}}\|)
+\eta\beta\epsilon\|\lambda_{\mathrm{prior}}\|, \\
\kappa_5
&\geq \beta(\kappa_2+\kappa_3)
+\eta\beta\kappa_3+\eta\beta\epsilon.
\end{aligned}
$$
\end{enumerate}
By \cref{lemma:discrete_to_cont}, the sequence of trajectories defined by
\cref{eq:discrete_path_lemma} [and consequently \cref{eq:discrete_interp_path}] then has convergent subsequences each of which converges to some Lipschitz continuous solution to \cref{eq:diff_incl_lemma}.

\section{Proof of \texorpdfstring{\cref{theorem:opt_param_cont_time}}{Theorem \ref{theorem:opt_param_cont_time}}}
\label{sec:proof_opt_param_cont_time}
We first prove that $\sigma_{\eta}$ is a continuous function of $\eta$ over $\eta\in\mathbb{R}_{++}$. Let $t_1,t_2\in[0,S]$ with $t_1\neq t_2$ be given, and define
$$
\begin{aligned}
\overline{v}(x;t_1,t_2)
:=\ &
\frac{\|z^\ast(t_2)-z^\ast(t_1)\|_{x^{-2}}}{|t_2-t_1|} \\
=\ &
\frac{\left(
\left\|x^\ast(t_2)-x^\ast(t_1)\right\|^2
+x^2
\left\|\lambda^\ast(t_2)-\lambda^\ast(t_1)\right\|^2\right)^{1/2}}{|t_2-t_1|}
\end{aligned}
$$
for $x\in\mathbb{R}_{++}$. Obviously $\overline{v}(x;t_1,t_2)$ is a convex function of $x$ over $x\in\mathbb{R}_{++}$. Then since $\sigma_{x^{-2}}$ is the supremum of $v(x;t_1,t_2)$ over $\{(t_1,t_2)\in[0,S]^2: t_1\neq t_2\}$, $\sigma_{x^{-2}}$ is also a convex function of $x$ over $x\in\mathbb{R}_{++}$. As the domain of $\sigma_{x^{-2}}$ is $\mathbb{R}_{++}$ which is open, we can conclude that $\sigma_{x^{-2}}$ is a continuous function of $x$ over $x\in\mathbb{R}_{++}$, and consequently $\sigma_{\eta}$ is a continuous function of $\eta$ over $\eta\in\mathbb{R}_{++}$.

\paragraph{Part 1.}
The proof uses the same approach as in proving \cref{theorem:feas_param}.
Let $R> \bar\delta$ be arbitrary, and define
$$
f_R(\delta,\beta,\eta,\epsilon)
=\delta\gamma(\delta,\eta,\epsilon)
-\sqrt{\eta}\epsilon M_{\lambda} - \beta^{-1}\sigma_{\eta}.
$$
We consider two cases.
\begin{enumerate}[itemindent=13.5pt,leftmargin=0pt]
\item $M_{\lambda}\neq 0$: Let $\delta_0=\bar\delta$ and
$$
\begin{aligned}
\eta_0 =\left(\frac{2\Lambda_m(\delta_0)}{\delta_0 M_{nc}(\delta_0)}\right)^2,\qquad 
\epsilon_0 = \frac{\Lambda_m(\delta_0)}{\eta_0}.
\end{aligned}
$$
We then have
$$
f_R(\delta_0,\beta,\eta_0,\epsilon_0)
=\frac{\bar\delta}{2}
\left(\Lambda_m\big(\bar\delta)
-M_{\lambda}M_{nc}\big(\bar\delta\big)\right)
-\beta^{-1}\eta_{\eta_0}
$$
Since $\Lambda_m\big(\bar\delta)
>M_{\lambda}M_{nc}\big(\bar\delta\big)$, we can find sufficiently large $\beta_0$ so that $f_R(\delta_0,\beta_0,\eta_0,\epsilon_0)$ is greater than $0$, implying that $\mathscr{S}_{\mathrm{fp}}$ is nonempty.

\item $M_{\lambda}=0$: Let $\eta_0>0$ be arbitrary, and let $\epsilon_0=\eta_0^{-1}\Lambda_m(\delta_0)$. Then
$$
\gamma(\delta,\eta_0,\epsilon_0)=\Lambda_m(\delta)-\frac{\sqrt{\eta_0}}{4}\delta M_{nc}(\delta).
$$
by the monotonicity of $\Lambda_m(\delta)$ and $M_{nc}(\delta)$,
$$
\lim_{\delta\rightarrow 0^+} \gamma(\delta,\eta_0,\epsilon_0)=\lim_{\delta\rightarrow 0^+}\Lambda_m(\delta)\geq\Lambda_m\left(\bar\delta\right)>0.
$$
Therefore there exists some $\delta_0\in\left(0,\bar\delta\right]$ such that $\gamma(\delta_0,\eta_0,\epsilon_0)>0$, and we have
$$
f_R(\delta_0,\beta,\eta_0,\epsilon_0)
=
\delta_0 \gamma(\delta_0,\eta_0,\epsilon_0) -\beta^{-1}\sigma_{\eta_0}.
$$
Therefore we can find sufficiently large $\beta_0>0$ such that $f_R(\delta_0,\beta_0,\eta_0,\epsilon_0)$ is positive, and consequently $\mathscr{S}_{\mathrm{fp}}$ is non-empty.
\end{enumerate}

Finally, by \cref{lemma:sup_continuous} and the continuity of the function $\eta\mapsto\sigma_{\eta}$, it can be seen that $f_R(\delta,\beta,\eta,\epsilon)$ is a continuous function over $(\delta,\beta,\eta,\epsilon)\in(0,R)\times\mathbb{R}_{++}^3$. Therefore the set
$$
\mathscr{S}_{\mathrm{fp}}
\cap\left((0,R)\times\mathbb{R}_{++}^3\right)=
\{(\delta,\beta,\eta,\epsilon)\in(0,R)\times\mathbb{R}_{++}^3: f_R(\delta,\beta,\eta,\epsilon)>0\}
$$
is an open subset of $\mathbb{R}_{++}^4$, and consequently
$$
\mathscr{S}_{\mathrm{fp}}
=\bigcup_{R>\bar\delta}
\mathscr{S}_{\mathrm{fp}}
\cap\left((0,R)\times\mathbb{R}_{++}^3\right)
$$
is an open subset of $\mathbb{R}_{++}^4$.

\paragraph{Part 2.}
Denote
$$
\begin{aligned}
g_0(\eta,\epsilon)
&:=\frac{
\beta^{-1}\sigma_{\eta}
+\sqrt{\eta}\epsilon M_{\lambda}
}{\gamma(\delta,\eta,\epsilon)}, \\
g_1(\eta,\epsilon)
&:=\delta\, \gamma(\delta,\eta,\epsilon)-\sqrt{\eta}\epsilon{M}_{\lambda}-\beta^{-1}\sigma_{\eta},
\end{aligned}
$$
where $(\eta,\epsilon)\in\mathbb{R}_{++}^2$. Obviously $g_0(\eta,\epsilon)=\delta$ if $g_1(\eta,\epsilon)=0$, and $g_0(\eta,\epsilon)<\delta$ if $g_1(\eta,\epsilon)>0$. It can also be seen that $g_1(\eta,\epsilon)$ is a continuous function over $(\eta,\epsilon)\in\mathbb{R}_{++}^2$.

Now let $M>0$ be arbitrary such that
$$
\frac{2\Lambda_m(\delta)}{M}\leq 
\min\left\{1,\frac{M_{\lambda}}{\delta^2}\right\},
$$
and let $(\eta,\epsilon)\in\mathbb{R}_{++}^2$ be arbitrary such that $\eta+\epsilon\geq M$. Consider the following two cases:
\begin{enumerate}[itemindent=13.5pt,leftmargin=0pt]
\item $\eta\epsilon\geq\Lambda_m(\delta)$. Then
$$
\begin{aligned}
g_1(\eta,\epsilon)
&=\delta \Lambda_m(\delta)
-\sqrt{\left(\frac{\delta^2 M_{nc}(\delta)}{4}\sqrt{\eta}
+\sqrt{\eta}\epsilon M_{\lambda}\right)^2}
-\beta^{-1}\sigma_{\eta} \\
&\leq \delta \Lambda_m(\delta)
-\sqrt{\frac{\delta^4 M_{nc}^2(\delta)}{16}\eta
+M_{\lambda}^2\eta\epsilon^2}-\beta^{-1}\sigma_{\eta} \\
&\leq \delta \Lambda_m(\delta)
-\sqrt{\frac{\delta^4 M_{nc}^2(\delta)}{16}\eta
+M_{\lambda}^2\Lambda_m(\delta)\epsilon}-\beta^{-1}\sigma_{\eta} \\
&\leq
\delta \Lambda_m(\delta)
-\min\left\{\frac{\delta^2 M_{nc}(\delta)}{4},
M_{\lambda}\sqrt{\Lambda_m(\delta)}\right\}
\sqrt{M}-\beta^{-1}\sigma_{\eta}.
\end{aligned}
$$
\item $\eta\epsilon<\Lambda_m(\delta)$. In this case, since
$$
\epsilon+\frac{\Lambda_m(\delta)}{\epsilon}
>\epsilon+\eta\geq M,
$$
we must have $\epsilon\geq M/2$ or $\epsilon^{-1}\Lambda_m(\delta)\geq M/2$. In the former case, we have $\eta<\epsilon^{-1}\Lambda(\delta)\leq 2\Lambda(\delta)/M$, and by the choice of $M$ we have $\eta\leq 1$ and $\delta\sqrt{\eta}\leq M_{\lambda}$. Thus
$$
\begin{aligned}
g_1(\eta,\epsilon)
&=\delta\eta\epsilon
-\frac{\sqrt{\eta}}{4}\delta^2 M_{nc}(\delta)
-\sqrt{\eta}\epsilon M_{\lambda}-\beta^{-1}\sigma_{\eta} \\
&\leq
\sqrt{\eta}\epsilon
(\delta\sqrt{\eta}-M_{\lambda})
-\beta^{-1}\sigma_{\eta}
\leq-\beta^{-1}\sigma_1.
\end{aligned}
$$
For the latter case, we have $\eta\geq M-\epsilon\geq M-2\Lambda_m(\delta)/M\geq M-1$, and therefore
$$
\begin{aligned}
g_1(\eta,\epsilon)
&=\delta\eta\epsilon
-\frac{\sqrt{\eta}}{4}\delta^2 M_{nc}(\delta)
-\sqrt{\eta}\epsilon M_{\lambda}-\beta^{-1}\sigma_{\eta} \\
&\leq\delta\Lambda_m(\delta)
-\frac{\sqrt{M-1}}{4}\delta^2 M_{nc}(\delta).
\end{aligned}
$$
\end{enumerate}
Summarizing these results, we get by the arbitrariness of $M$ that
$$
\limsup_{\eta+\epsilon\rightarrow+\infty}
g_1(\eta,\epsilon)<0.
$$
Therefore the set $g_1^{-1}[\mathbb{R}_+]$ is a bounded subset of $\mathbb{R}^2_{++}$.

Now let $(\tilde\eta,\tilde\epsilon)$ be any boundary point of the set $g_1^{-1}[\mathbb{R}_+]$ in $\mathbb{R}^2$, and let $(\eta_k,\epsilon_k)_{k\in\mathbb{N}}$ be a sequence in $g_1^{-1}[\mathbb{R}_+]$ that converges to $(\tilde\eta,\tilde\epsilon)$. Obviously
$$
\liminf_{k\rightarrow\infty}
g_1(\eta_k,\epsilon_k) \geq 0,
$$
and by the continuity of $g_1$ on $\mathbb{R}_{++}^2$, we further have $\lim_{k\rightarrow\infty}g_1(\eta_k,\epsilon_k)=g_1(\tilde\eta,\tilde\epsilon)=0$ if $(\tilde\eta,\tilde\epsilon)\in\mathbb{R}_{++}^2$. Since
$$
\limsup_{
\substack{(\eta,\epsilon)\rightarrow(0,\tilde\epsilon) \\ (\eta,\epsilon)\in\mathbb{R}_{++}^2}}
g_1(\eta,\epsilon) =
\limsup_{\eta\rightarrow 0^+}\left(-\beta^{-1}\sigma_\eta\right)<0,
$$
we see that $\tilde\eta\neq 0$, and then since
$$
\lim_{
\substack{(\eta,\epsilon)\rightarrow(\tilde\eta,0) \\ (\eta,\epsilon)\in\mathbb{R}_{++}^2}}
g_1(\eta,\epsilon) =
-\frac{\sqrt{\tilde\eta}}{4}\delta^2 M_{nc}(\delta)-\beta^{-1}\sigma_{\tilde\eta}<0,
$$
we see that $\tilde\epsilon\neq 0$. Therefore the boundary point of $g_1^{-1}[\mathbb{R}_+]$ in $\mathbb{R}^2$ are all in $\mathbb{R}_{++}^2$. By the continuity of $g_1$, we can conclude that $g_1^{-1}[\mathbb{R}_+]$ is a closed subset of $\mathbb{R}^2$. Together with the boundedness shown above, we have shown that $g_1^{-1}[\mathbb{R}_+]$ is compact.

By the continuity of $g_0(\eta,\epsilon)$ over ${(\eta,\epsilon)\in g_1^{-1}[\mathbb{R}_+]}$ and the compactness of the set $g_1^{-1}[\mathbb{R}_+]$, the minimum of $g_0(\eta,\epsilon)$ over $(\eta,\epsilon)\in g_1^{-1}[\mathbb{R}_+]$ is achieved by some $(\eta^\ast,\epsilon^\ast)\in g_1^{-1}[\mathbb{R}_+]$. By assumption, the set $$
\mathscr{A}_{\mathrm{fp}}(\delta,\beta)=g_1^{-1}[\mathbb{R}_{++}]\subseteq g_1^{-1}[\mathbb{R}_+]
$$
is nonempty, and for $(\eta,\epsilon)\in g_1^{-1}[\mathbb{R}_{++}]$ we have $g_0(\eta,\epsilon)<\delta$, while for $(\eta,\epsilon)\in g_1^{-1}[\{0\}]$ we have $g_0(\eta,\epsilon)=\delta$. Therefore $(\eta^\ast,\epsilon^\ast)$ must be in the set $g_1^{-1}[\mathbb{R}_{++}]$.

Next we show that $\epsilon^\ast=\Lambda_m(\delta^\ast)/\eta^\ast$. It's not hard to check that the function $\epsilon\mapsto g_1(\eta^\ast,\epsilon)$ is continuous over $\epsilon\in\mathbb{R}_{++}$, is monotonic when $\epsilon\leq \Lambda_m(\delta)/\eta^\ast$, and is decreasing when $\epsilon\geq \Lambda_m(\delta)/\eta^\ast$. Thus the set $$
{A_{\eta^\ast}:=\{\epsilon>0: g_1(\eta^\ast,\epsilon)>0\}}
$$
is an open interval in $\mathbb{R}_{++}$. We have
$$
\frac{d}{d\epsilon}g_0(\eta^\ast,\epsilon)
=
-\frac{\eta^\ast\left(\delta M_{\lambda} M_{nc}(\delta)/4
+\beta^{-1}\sigma_{\eta^\ast}\right)}{\left(\eta^\ast\epsilon-\sqrt{\eta^\ast}\delta M_{nc}(\delta)/4\right)^2}<0
$$
when $\epsilon\in A_{\eta^\ast}\cap\left(0,\Lambda_m(\delta)/\eta^\ast\right)$, and
$$
\frac{d}{d\epsilon}g_0(\eta^\ast,\epsilon)
=
\frac{\sqrt{\eta^\ast}M_{\lambda}}{\Lambda_m(\delta)-\sqrt{\eta^\ast}\delta M_{nc}(\delta)/4}> 0
$$
when $\epsilon\in A_{\eta^\ast}\cap
\left(\Lambda_m(\delta)/\eta^\ast,+\infty\right)$. Since the minimum of $g_0(\eta^\ast,\epsilon)$ over $\epsilon\in A_{\eta^\ast}$ is achieved, we must have $\epsilon^\ast=\Lambda_m(\delta)/\eta^\ast$. This conclusion has a further implication: If we define the function
$$
\begin{aligned}
\tilde g_0(x)
&=g_0\left(x^{-2},x^2\Lambda_m(\delta)\right) =\frac{\beta^{-1}\sigma_{x^{-2}}+x\Lambda_m(\delta)M_{\lambda}}{\Lambda_m(\delta)-x^{-1}\delta M_{nc}(\delta)/4},\\
\tilde g_1(x)
&=g_1\left(x^{-2},x^2\Lambda_m(\delta)\right)
\end{aligned}
$$
for $x\in(x_l,+\infty)$, where
$$
x_l:=\frac{\delta M_{nc}(\delta)}{4\Lambda_m(\delta)},
$$
then
$$
\sqrt{\frac{\epsilon^\ast}{\Lambda_m(\delta)}}
=\frac{1}{\sqrt{\eta^\ast}}
\in
\argmin_{x\in \tilde g_1^{-1}[\mathbb{R}_{++}]}
\tilde g_0(x).
$$
Finally, we prove that $\tilde g_0(x)$ has a unique minimizer over $(x_l,+\infty)$. Since $\sigma_{x^{-2}}$ is a convex and nondecreasing function of $x$, it is absolutely continuous and admits a weak derivative $u(x)$ which is nonnegative and nondecreasing in $x$. Then we see that $\tilde g_0(x)$ is also absolutely continuous, whose weak derivative is given by
$$
D\tilde g_0(x)
=
\frac{\beta^{-1}u(x)+\Lambda_m(\delta)M_{\lambda}
-\frac{\beta^{-1}x^{-1}\sigma_{x^{-2}}+\Lambda_m(\delta)M_{\lambda}}{x(\Lambda_m(\delta)-x^{-1}\delta M_{nc}(\delta)/4)}\frac{\delta M_{nc}(\delta)}{4}}
{\Lambda_m(\delta)-x^{-1}\delta M_{nc}(\delta)/4}.
$$
We can see that $x^{-1}\sigma_{x^{-2}}$ is equal to
$$
\sup_{\substack{t_1,t_2\in[0,S],\\t_1\neq t_2}}
\frac{\left(
x^{-2}\left\|x^\ast(t_2)-x^\ast(t_1)\right\|^2
+
\left\|\lambda^\ast(t_2)-\lambda^\ast(t_1)\right\|^2
\right)^{1/2}}{|t_2-t_1|},
$$
showing that $x^{-1}\sigma_{x^{-2}}$ is nonincreasing in $x$. Then we can easily verify that
$$
\frac{\beta^{-1}x^{-1}\sigma_{x^{-2}}+\Lambda_m(\delta)M_{\lambda}}{x(\Lambda_m(\delta)-x^{-1}\delta M_{nc}(\delta)/4)}
$$
is a strictly decreasing function of $x$, as the numerator is nonincreasing in $x$ and the denominator is positive and strictly increasing in $x$ for $x>\delta M_{nc}(\delta)/(4\Lambda_m(\delta))$. Moreover, we have
$$
\begin{aligned}
&\lim_{x\rightarrow+\infty}
\left(
\beta^{-1}u(x)+\Lambda_m(\delta)M_{\lambda}
-\frac{\beta^{-1}x^{-1}\sigma_{x^{-2}}+\Lambda_m(\delta)M_{\lambda}}{x(\Lambda_m(\delta)-x^{-1}\delta M_{nc}(\delta)/4)}\frac{\delta M_{nc}(\delta)}{4}
\right) \\
=\ &
\beta^{-1}\lim_{x\rightarrow+\infty}u(x)
+\Lambda_m(\delta) M_{\lambda}>0,
\end{aligned}
$$
while
$$
\lim_{x\rightarrow x_l^+}
\left(
\beta^{-1}u(x)+\Lambda_m(\delta)M_{\lambda}
-\frac{\beta^{-1}x^{-1}\sigma_{x^{-2}}+\Lambda_m(\delta)M_{\lambda}}{x(\Lambda_m(\delta)-x^{-1}\delta M_{nc}(\delta)/4)}\frac{\delta M_{nc}(\delta)}{4}
\right) 
=-\infty.
$$
Therefore there exists a unique $x^\ast\in\left(x_l,+\infty\right)$ such that $D\tilde g_0(x)<0$ for $x\in(x_l,x^\ast)$ and $D\tilde g_0(x)>0$ for $x\in(x^\ast,+\infty)$. We then see that $\tilde g_0(x)$ is a unimodal function over $x\in(x_l,+\infty)$ with the unique minimizer $x^\ast$ . Therefore
$$
\{x^\ast\}=
\argmin_{x\in (x_l,+\infty)}
\tilde g_0(x).
$$
Since $\tilde g_1^{-1}[\mathbb{R}_{++}]\subseteq(x_l,+\infty)$, and both $\tilde g_1^{-1}[\mathbb{R}_{++}]$ and $(x_l,+\infty)$ are open subsets of $\mathbb{R}_{++}$, we have
$$
\argmin_{x\in \tilde g_1^{-1}[\mathbb{R}_{++}]}
\tilde g_0(x)
\subseteq 
\argmin_{x\in (x_l,+\infty)}
\tilde g_0(x),
$$
from which we can conclude that $x^\ast=1/\sqrt{\eta^\ast}=\sqrt{\epsilon^\ast/\Lambda_m(\delta)}$.

The unimodality of $b_{\delta,\beta}(\epsilon)$ can be obtained by noting that $b_{\delta,\beta}(\epsilon)=\tilde g_0\big(\sqrt{\epsilon/\Lambda_m(\delta)}\big)$ and that $\epsilon\mapsto\sqrt{\epsilon/\Lambda_m(\delta)}$ is a strictly increasing function.

\bibliographystyle{siamplain}
\bibliography{biblio.bib,biblio2.bib}
\end{document}